\documentclass[11pt]{article}

\usepackage{enumerate}
\usepackage{amsmath}
\usepackage{amssymb,latexsym}
\usepackage{amsthm}
\usepackage{color}
\usepackage{graphicx}
\usepackage{amscd}
\usepackage{hyperref}

\title{On scattered linear sets of pseudoregulus type in $\PG(1,q^t)$}
\author{Bence Csajb\'ok \and Corrado Zanella}

\newcommand{\cB}{{\mathcal B}}
\newcommand{\cT}{{\mathcal T}}
\newcommand{\cC}{{\mathcal C}}

\newcommand{\cF}{{\mathcal F}}
\newcommand{\cP}{{\mathcal P}}
\newcommand{\cH}{{\mathcal H}}

\newcommand{\Lb}{{\mathbb L}}

\newcommand{\cQ}{{\mathcal Q}}
\newcommand{\cS}{{\mathcal S}}

\newcommand{\F}{{\mathbb F}}
\newcommand{\Fq}{\F_q}
\newcommand{\Fqt}{\F_{q^t}}

\newcommand{\la}{\langle}
\newcommand{\ra}{\rangle}

\newcommand{\Qed}{\hfill $\Box$ \medskip}

\newcommand{\hs}{{\hat{\sigma}}}

\newtheorem{theorem}{Theorem}[section]

\newtheorem{lemma}[theorem]{Lemma}
\newtheorem{corollary}[theorem]{Corollary}
\newtheorem{definition}[theorem]{Definition}
\newtheorem{proposition}[theorem]{Proposition}

\newtheorem{remark}[theorem]{Remark}

\DeclareMathOperator{\PG}{{PG}}

\DeclareMathOperator{\PGL}{{PGL}}
\DeclareMathOperator{\PGaL}{P\Gamma L}
\DeclareMathOperator{\Gal}{Gal}

\begin{document}
\maketitle

\begin{abstract}
Scattered linear sets of pseudoregulus type in $\PG(1,q^t)$ have been defined and investigated in
\cite{LuMaPoTr2014,DoDu9}.
The aim of this paper is to continue such an investigation.
Properties of a scattered linear set of pseudoregulus type, say $\Lb$, 
are proved by means of three different ways to obtain $\Lb$:
(i) as projection of a $q$-order canonical subgeometry  \cite{LuPo2004},
(ii) as a set whose image under the field reduction map is the hypersurface of degree $t$ in $\PG(2t-1,q)$
studied in  \cite{LaShZa2013},
(iii) as exterior splash,  by the correspondence described in \cite{LaZa2015}.
In particular, given a canonical subgeometry $\Sigma$ of $\PG(t-1,q^t)$,
necessary and sufficient conditions are given for the projection of $\Sigma$ 
with center a $(t-3)$-subspace to be a linear set of pseudoregulus type.
Furthermore, the $q$-order sublines are counted and geometrically described.
\end{abstract}

\bigskip
{\it AMS subject classification:} 51E20

\bigskip
{\it Keywords:} linear set, subgeometry, normal rational curve, finite projective space

\section{Introduction}

If $V$ is a vector space over  the finite field $\F_{q^t}$, 
then $\PG_{q^t}(V)$ denotes the projective space whose points are the one-dimensional $\F_{q^t}$-subspaces of $V$. 
If $V$ has dimension $n$ over $\F_{q^t}$, then $\PG_{q^t}(V)=\PG(n-1,q^t)$. 
For a point set $T\subset \PG(n-1,q^t)$ denote by $\la T \ra$ the projective subspace 
of $\PG(n-1,q^t)$ spanned by the points in $T$. 
For $m \mid t$ and a set of elements $S\subset V$ denote by $\la S \ra_{q^m}$ the $\F_{q^m}$-vector subspace 
of $V$ spanned by the vectors in $S$. 
For the rest of the paper 
assume that $q=p^e$ is a power of the prime $p$.
Also $\theta_s=(q^{s+1}-1)/(q-1)$ for $s\in\mathbb N\cup\{-1\}$, and $N(x)=q^{\theta_{t-1}}$ is the norm
of $x\in\Fqt$ over $\Fq$.

Let $R=\F_{q^t}^r$, and let
$P=\PG_{q^t}(T)$ be a point of $\PG(r-1,q^t)$, where $T$ is a one-dimensional $\F_{q^t}$-subspace of $R$. 
Then $\cF_{r,t,q}(P):=\PG_q(T)$ defines the \emph{field reduction} from $\PG(r-1,q^t)$ to $\PG(rt-1,q)$. 
Denote the point set of $\PG(r-1,q^t)$ by $\cP$. 
Then $\cF_{r,t,q}(\cP)$ is a Desarguesian $(t-1)$-spread of $\PG(rt-1,q)$ \cite{Lu1999}. 
Let $S$ be a subspace of $\PG(rt-1,q)=\PG_q(R)$, then 
\[ \cB(S):=\{P \in \PG(r-1,q^t) \mid \cF_{r,t,q}(P)\cap S \neq \emptyset\}.\]
A point set $L \subseteq \PG(r-1,q^t)$ is said to be \emph{$\F_q$-linear}
(or just \emph{linear}) \emph{of rank $n$} if $L=\cB(S)$ for some $(n-1)$-subspace $S\subseteq \PG(rt-1,q)$.
The size of such $L$ is at most $\theta_{n-1}$; if the size of $L$ is equal to $\theta_{n-1}$, then $L$ is a
\emph{scattered} linear set.

\begin{definition}
Let $\PG_{q^t}(V)=\PG(n-1,q^t)$, let $W$ be an $n$-dimensional $\F_q$-vector subspace of $V$, 
and $\Sigma=\{ \la w \ra_{q^t} \mid w\in W^*\}$. 
If $\la \Sigma\ra=\PG(n-1,q^t)$, then $\Sigma$ is a 
\emph{($q$-order) canonical subgeometry} of $\PG(n-1,q^t)$.
\end{definition}

Let $\Sigma$ be a $q$-order canonical subgeometry of ${\overline{\Sigma}}=\PG(n-1,q^t)$. 
Let $\Gamma \subset {\overline{\Sigma}} \setminus \Sigma$  be an $(n-1-r)$-space 
and let $\Lambda \subset {\overline{\Sigma}} \setminus \Gamma$ be an $(r-1)$-space of ${\overline{\Sigma}}$. 
The projection of $\Sigma$ from {\it center} 
$\Gamma$ to {\it axis} $\Lambda$ is the point set
\begin{equation}
\label{proj}
L=p_{\,\Gamma,\,\Lambda}\left(\Sigma\right):=\{\la \Gamma, P \ra \cap \Lambda \mid P\in \Sigma\}.
\end{equation}

In \cite{LuPo2004} Lunardon and Polverino characterized linear sets as projections of canonical subgeometries. 
They proved the following.

\begin{theorem}[{\cite[Theorems 1 and 2]{LuPo2004}}]
\label{LuPo}
Let ${\overline{\Sigma}}$, $\Sigma$, $\Lambda$, $\Gamma$ and $L=p_{\,\Gamma,\,\Lambda}(\Sigma)$ be defined as above. 
Then $L$ is an $\F_q$-linear set of rank 
$n$ and $\la L \ra=\Lambda$. 
Conversely, if $L$ is an $\F_q$-linear set of rank $n$ of $\Lambda=\PG(r-1,q^t)\subset {\overline{\Sigma}}$ 
and $\la L \ra=\Lambda$, then there is an $(n-1-r)$-space $\Gamma$ disjoint from $\Lambda$ 
and a $q$-order canonical subgeometry $\Sigma$ disjoint from  $\Gamma$ 
such that $L=p_{\,\Gamma,\,\Lambda}(\Sigma)$.
\end{theorem}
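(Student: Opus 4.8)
The plan is to translate everything into the underlying vector space $V$ with $\overline{\Sigma}=\PG_{q^t}(V)$, $\dim_{\Fqt}V=n$, and to show that the geometric projection $p_{\,\Gamma,\,\Lambda}$ is simply induced by an ordinary $\Fqt$-linear projection. Write $\Gamma=\PG_{q^t}(U)$ and $\Lambda=\PG_{q^t}(V')$, so that $\dim_{\Fqt}U=n-r$ and $\dim_{\Fqt}V'=r$; the hypothesis $\Gamma\cap\Lambda=\emptyset$ together with the dimension count $(n-r)+r=n$ yields the direct sum $V=U\oplus V'$. Let $\pi\colon V\to V'$ be the $\Fqt$-linear projection along $U$. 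By the definition of a canonical subgeometry, $\Sigma=\{\la w\ra_{q^t}\mid w\in W^*\}$ for an $n$-dimensional $\Fq$-subspace $W$ of $V$ with $\la W\ra_{q^t}=V$, and the condition $\Gamma\cap\Sigma=\emptyset$ is equivalent to $W\cap U=\{0\}$, i.e. $\pi$ is injective on $W$.

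First I would prove the key identity $p_{\,\Gamma,\,\Lambda}(\la v\ra_{q^t})=\la\pi(v)\ra_{q^t}$ for every $v\in V$ with $\pi(v)\neq 0$. Writing $v=u+v'$ with $u\in U$ and $v'=\pi(v)\in V'$, one has $\la\Gamma,\la v\ra_{q^t}\ra=\PG_{q^t}(U+\la v'\ra_{q^t})$, and $(U+\la v'\ra_{q^t})\cap V'=\la v'\ra_{q^t}$ because $U\cap V'=\{0\}$. Applying this to the points of $\Sigma$, and using $W\cap U=\{0\}$ to guarantee $\pi(w)\neq 0$ for $w\in W^*$, I get
\[
L=\{\la \pi(w)\ra_{q^t}\mid w\in W^*\}.
\]
Now $W':=\pi(W)$ is an $n$-dimensional $\Fq$-subspace of $V'$ by injectivity of $\pi|_W$, so $S:=\PG_q(W')$ is an $(n-1)$-subspace of $\PG_q(V')\cong\PG(rt-1,q)$; identifying the field reduction $\cF_{r,t,q}(\la v'\ra_{q^t})$ with $\PG_q(\la v'\ra_{q^t})$, the displayed description says exactly that $\la v'\ra_{q^t}\in L$ iff $\la v'\ra_{q^t}\cap W'\neq\{0\}$, i.e. $L=\cB(S)$. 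Hence $L$ is $\Fq$-linear of rank $n$. For the span, $\la L\ra=\PG_{q^t}(\la W'\ra_{q^t})$ and $\la W'\ra_{q^t}=\pi(\la W\ra_{q^t})=\pi(V)=V'$, so $\la L\ra=\Lambda$.

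For the converse I would reverse this construction. By definition $L=\cB(\PG_q(W'))$ for some $n$-dimensional $\Fq$-subspace $W'\subseteq V'$, and $\la L\ra=\Lambda$ forces $\la W'\ra_{q^t}=V'$. Choose any $\Fqt$-complement $U$ of $V'$ in $V$ and set $\Gamma=\PG_{q^t}(U)$, with $\pi$ the projection along $U$; then $\Gamma\cap\Lambda=\emptyset$. The task is to produce an $n$-dimensional $\Fq$-subspace $W\subseteq V$ with $\la W\ra_{q^t}=V$ and $\pi(W)=W'$: starting from an $\Fq$-basis $e_1,\dots,e_n$ of $W'$ I would lift it to $\tilde e_i=e_i+f_i$ with $f_i\in U$ and set $W=\la \tilde e_1,\dots,\tilde e_n\ra_{\Fq}$. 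Since $\pi(\tilde e_i)=e_i$, the lifts are automatically $\Fq$-independent, $\pi(W)=W'$, and $W\cap U=\{0\}$; thus $\Sigma:=\{\la w\ra_{q^t}\mid w\in W^*\}$ is disjoint from $\Gamma$ and $p_{\,\Gamma,\,\Lambda}(\Sigma)=\cB(\PG_q(\pi(W)))=L$ by the forward part. It remains only to choose the $f_i$ so that the $n$ vectors $\tilde e_i$ are $\Fqt$-independent, equivalently $\la W\ra_{q^t}=V$.

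This last point is where I expect the only genuine work to lie. After reordering so that $e_1,\dots,e_r$ is an $\Fqt$-basis of $V'$ (possible since the $e_i$ span $V'$), I would express the $\tilde e_i$ in the basis of $V$ obtained by adjoining an $\Fqt$-basis of $U$; the resulting $n\times n$ matrix over $\Fqt$ has block form whose determinant, after row reduction, equals $\det(C_2-AC_1)$, where $C_1,C_2$ record the chosen $U$-components and $A$ records the expansions of $e_{r+1},\dots,e_n$. Taking $f_i=0$ for $i\le r$ and letting $f_{r+1},\dots,f_n$ be a basis of $U$ makes this determinant equal to $1$, settling existence; more conceptually, the $\Fq$-sections of $\pi$ form an affine space on which the spanning condition is a nonempty Zariski-open subset, so almost every lift works. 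Everything else is the routine translation between the projective operations $\la\cdot\ra$, $\cap$, $p_{\,\Gamma,\,\Lambda}$ and their linear-algebraic counterparts.
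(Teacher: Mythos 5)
Your proof is correct, but note that the paper contains no proof of Theorem \ref{LuPo} to compare against: it is imported verbatim from Lunardon--Polverino \cite{LuPo2004}, and your argument is in fact a sound reconstruction of the standard one. The core identity $p_{\,\Gamma,\,\Lambda}(\la v\ra_{q^t})=\la\pi(v)\ra_{q^t}$ for $\pi(v)\neq0$ is verified correctly (write $U+\la v\ra_{q^t}=U+\la\pi(v)\ra_{q^t}$ and intersect with $V'$ using $U\cap V'=\{0\}$), and it is the same dictionary the paper uses implicitly in Section~\ref{s:center}, where the maps $\varphi_1,\varphi_2$ are exactly such canonical projections. From it, $L=\{\la\pi(w)\ra_{q^t}\mid w\in W^*\}=\cB(\PG_q(\pi(W)))$ with $\pi(W)$ of $\Fq$-dimension $n$ (injectivity of $\pi|_W$ being precisely $\Sigma\cap\Gamma=\emptyset$), and $\la L\ra=\PG_{q^t}(\pi(\la W\ra_{q^t}))=\Lambda$; both directions of the equivalence $\la L\ra=\Lambda\Leftrightarrow\la W'\ra_{q^t}=V'$ that you use in the converse are right. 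You also correctly locate the only genuine work: lifting an $\Fq$-basis $e_1,\ldots,e_n$ of $W'$ to $\tilde e_i=e_i+f_i$ always gives $W\cap U=\{0\}$ and $\pi(W)=W'$ (apply $\pi$ to any $\Fq$-dependence), but $\la W\ra_{q^t}=V$ needs a choice of the $f_i$; your explicit choice works, since after reordering so that $e_1,\ldots,e_r$ is an $\Fqt$-basis of $V'$ (extractable because the $e_i$ $\Fqt$-span $V'$), taking $f_i=0$ for $i\le r$ and $f_{r+1},\ldots,f_n$ an $\Fqt$-basis of $U$ yields vectors whose any $\Fqt$-dependence, projected to $U$ along $V'$, forces all coefficients to vanish. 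The block-determinant remark and the Zariski-openness aside are dispensable given this explicit lift, and one last box to tick is immediate from the paper's definitions: $\la\Sigma\ra=\PG(n-1,q^t)$ is by definition the condition $\la W\ra_{q^t}=V$ you established, so $\Sigma$ is indeed a $q$-order canonical subgeometry.
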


Note that when $r=n$ in Theorem \ref{LuPo}, then $L=p_{\,\Gamma,\,\Lambda}(\Sigma)=\Sigma$, 
hence $L$ is a canonical subgeometry.

A family of scattered $\F_q$-linear sets of rank $tm$ of $\PG(2m-1,q^t)$, called of {\it pseudoregulus type}, have been introduced in \cite{MaPoTr2007} for $m=2$ and $t=3$, further generalized in \cite{LaVa2013} for $m\geq 2$ and $t=3$ and finally in \cite{LuMaPoTr2014} for $m\geq 1$ and $t\geq 2$ (for $t=2$ they are the same as Baer subgeometries isomorphic to $\PG(2m-1,q)$, see \cite[Remark 3.4]{LuMaPoTr2014}). 
This paper is devoted to the investigation of linear sets of pseudoregulus type in $\PG(1,q^t)$. 
It has been proved in \cite[Section 4]{LuMaPoTr2014} and in \cite[Remark 2.2]{DoDu9} that all linear sets of pseudoregulus type in $\PG(1,q^t)$ are $\mathrm{PGL}(2,q^t)$-equivalent and hence they ca ben defined as follows. 

\begin{definition}[\cite{LuMaPoTr2014,DoDu9}]
\label{pseudo}
A point set $\Lb$ of $\PG_{q^t}(\F_{q^t}^2)=\PG(1,q^t)$, $t\geq 2$, is called a \emph{linear set of pseudoregulus type} if $\Lb$ is projectively equivalent to
\begin{equation}
\label{psedefi}
\Lb_0=\{\la (\lambda,\lambda^q)\ra_{q^t} \mid \lambda\in \F_{q^t}^*\}.
\end{equation}
If $\phi$ is a projectivity mapping $\Lb_0$ into $\Lb$, 
then $\la(1,0)\ra_{q^t}^\phi$ and $\la(0,1)\ra_{q^t}^\phi$ are  \emph{transversal points of} $\Lb$.
\end{definition}
There are precisely two transversal points \cite[Proposition 4.3]{LuMaPoTr2014}.

Section \ref{s:center} is devoted to the characterization (Theorem \ref{t:main}) 
of the projecting configuration giving rise,
according to Theorem \ref{LuPo}, to a linear set of pseudoregulus type in $\PG(1,q^t)$, say $\Lb$.
In particular, a necessary and sufficient condition is that the center of the projection
is of type \[\Gamma=\la P,P^\hs,\ldots,P^{\hs^{t-3}}\ra,\] where $P$ is an imaginary point
with respect the canonical subgeometry $\Sigma$, and $\hs$ is a collineation of order $t$
fixing $\Sigma$ pointwise.
Dualizing Theorem \ref{t:main} allows a description of all line and canonical subgeometry pairs
giving rise to an exterior splash which is a linear set of pseudoregulus type in $\PG(1,q^t)$
(Theorem \ref{t:splash}).

In Section \ref{s:NRC} a sufficient condition (Proposition \ref{t:NRC-reali}) is proved for a 
normal rational curve in $\PG(t-1,q^t)$, $t$ prime, to be $\Fq$-rational.

In Section \ref{s:sublines} the $q$-order sublines contained in a
linear set of pseudoregulus type in $\PG(1,q^t)$ are thoroughly investigated.
Their number is computed (\ref{e:numero-sublines}) for $q\ge t$.
A particular subset of them is described, where the $q$-order sublines are projections of normal $\Fq$-rational
curves of order $t-1$ containing the basis $\{P,P^\hs,\ldots,P^{\hs^{t-1}}\}$.
This generalizes the result on $q$-order sublines in \cite[Theorem 5.2]{BaJa2014}.

In order to describe all $q$-order sublines in $\Lb$, in
Section \ref{theta} the notion of a $d$-power of a line is introduced and investigated.
Assuming $\PG(t-1,q)=\PG_q(\Fqt)$,
the $d$-power $\ell^d$ of a line $\ell$ is the set of all $\la x^d\ra_q$ for $\la x \ra_q\in\ell$.
In Theorem \ref{maintheta} a sufficient condition on $d$ is proved for $\ell^d$ to be a normal rational curve.
As a consequence, 
for $q\ge t$ any $q$-order subline in $\Lb$ is projection of a normal rational curve (Theorem \ref{t:tutte-NRC}).
Additionally, if $t$ is prime, there are $(t-1)$ families of sublines arising from normal rational curves
of all orders from one to $t-1$, where the order is constant inside any family (Theorem \ref{p:unique-family}).

\section{Characterization of the projecting configurations}
\label{s:center}
In this section $\Lb$ denotes a scattered linear set of pseudoregulus type in $\ell_0\cong\PG(1,q^t)$, $t\geq 3$. 
It is assumed that for a $(t-3)$-subspace $\Gamma$ of $\PG(t-1,q^t)\supset\ell_0$,
  with $\Gamma\cap\ell_0=\emptyset$,
  and a $q$-order $(t-1)$-dimensional subgeometry $\Sigma$, $\Sigma\cap\Gamma=\emptyset$,
  of $\PG(t-1,q^t)$ it holds
  $p_{\Gamma,\ell_0}(\Sigma)=\Lb$.
    In $\PG(t-1,q^t)$ take homogeneous coordinates such that $\Sigma$ is the set of points having
  coordinates $x_1, x_2,\ldots,x_t$ in $\F_q$.

  Let $\hat\Gamma$ and $\hat\ell_0$ be the $(t-2)$-subspace and 2-subspace of $\F_{q^t}^t$
  related to $\Gamma$ and $\ell_0$, respectively.
  Two vectors $v_1,v_2\in\hat\ell_0$ exist such that
  \begin{equation}\label{e:char_L}
    \Lb=\{\langle \lambda v_1+\lambda^q v_2\rangle_{q^t}\mid\lambda\in\F_{q^t}^*\}.
  \end{equation}
  Denote by $\varphi_i$, $i=1,2$, the canonical projection onto $\langle v_i\rangle_{q^t}$
  with respect to $(\hat\Gamma+\langle v_{3-i}\rangle_{q^t})\oplus\langle v_i\rangle_{q^t}$.
  By this definition 
  $\Lb=\{\langle a^{\varphi_1}+a^{\varphi_2}\rangle_{q^t}\mid a\in(\F^t_{q})^*\}$.
  This implies that letting
  \[
    \varphi_i(a_1,a_2,\ldots,a_t)=\sum_{j=1}^t\mu_{ij}a_jv_i,\quad i=1,2,
  \]
  both $(\mu_{11},\mu_{12},\ldots,\mu_{1t})$ and $(\mu_{21},\mu_{22},\ldots,\mu_{2t})$
  are $t$-tuples of $\F_q$-linearly independent elements of $\F_{q^t}$.
  Then the map $\varphi:\,\F_{q^t}\rightarrow\F_{q^t}$ defined by
  $(\sum_{j=1}^t\mu_{1j}a_j)^\varphi=\sum_{j=1}^t\mu_{2j}a_j$ for any $a_1$, $a_2$, $\ldots$, $a_t\in\F_q$
  is a well-defined non-singular $\F_q$-linear map.
  The linear set $\Lb$ can be expressed by means of this map as follows:
  \begin{equation}\label{e:rechar_L}
     \Lb=\{\langle \lambda v_1+\lambda^{\varphi} v_2\rangle_{q^t}\mid\lambda\in\F_{q^t}^*\}.
  \end{equation}
\begin{proposition}\label{p:phi}
The function $\varphi$ has the following properties.
\begin{enumerate}[(i)]  
  \item Any $\lambda\in\F_{q^t}^*$ satisfies $N(\lambda^\varphi\lambda^{-1})=1$. 
  \item For any $\beta,\lambda\in\F_{q^t}$, $\lambda\neq0$, it holds $(\beta\lambda)^\varphi=\beta\lambda^\varphi$
  if and only if $\beta\in\F_q$.
\end{enumerate}
\end{proposition}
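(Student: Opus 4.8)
The plan is to exploit that the excerpt furnishes two descriptions of the \emph{same} point set $\Lb$ in the \emph{same} basis $v_1,v_2$: the ``Frobenius'' form \eqref{e:char_L}, $\Lb=\{\langle\lambda v_1+\lambda^q v_2\rangle_{q^t}\mid\lambda\in\F_{q^t}^*\}$, which records that $\Lb$ is of pseudoregulus type, and the form \eqref{e:rechar_L}, $\Lb=\{\langle\lambda v_1+\lambda^\varphi v_2\rangle_{q^t}\mid\lambda\in\F_{q^t}^*\}$, coming from the projection. Before anything else I would record the fibre structure of each parametrisation. Since $\varphi$ and the $q$-power map are both $\F_q$-linear, replacing $\lambda$ by $c\lambda$ with $c\in\F_q^*$ leaves the corresponding point unchanged; as $\Lb$ is scattered, $|\F_{q^t}^*|/|\F_q^*|=\theta_{t-1}=|\Lb|$, so these $\F_q^*$-cosets are exactly the fibres. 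Hence in either description two parameters yield the same point of $\Lb$ if and only if they differ by a factor in $\F_q^*$.

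For (i) I would take an arbitrary $\lambda\in\F_{q^t}^*$ and read the point $\langle\lambda v_1+\lambda^\varphi v_2\rangle_{q^t}$ through \eqref{e:char_L}: it lies in $\Lb$, so it equals $\langle\lambda' v_1+\lambda'^q v_2\rangle_{q^t}$ for some $\lambda'\in\F_{q^t}^*$. Comparing coefficients in the basis $v_1,v_2$ gives $\lambda=\rho\lambda'$ and $\lambda^\varphi=\rho\lambda'^q$ for a suitable $\rho\in\F_{q^t}^*$, whence $\lambda^\varphi\lambda^{-1}=\lambda'^{\,q-1}$. Applying the norm and using that $N$ takes values in $\F_q^*$ yields $N(\lambda^\varphi\lambda^{-1})=N(\lambda')^{q-1}=1$, which is exactly (i). The only point requiring care is that both descriptions are written in the one basis $v_1,v_2$, so that the two representatives may legitimately be compared coordinatewise.

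For (ii) the ``if'' part is immediate: if $\beta\in\F_q$ then $(\beta\lambda)^\varphi=\beta\lambda^\varphi$ by the $\F_q$-linearity of $\varphi$. For the ``only if'' part I would argue via the fibre structure. Suppose $(\beta\lambda)^\varphi=\beta\lambda^\varphi$ for some $\lambda\neq0$; the case $\beta=0$ being trivial, assume $\beta\neq0$. Then the representative attached to the parameter $\beta\lambda$ is $(\beta\lambda)v_1+(\beta\lambda)^\varphi v_2=\beta(\lambda v_1+\lambda^\varphi v_2)$, so $\beta\lambda$ and $\lambda$ determine the same point of $\Lb$. By the fibre description this forces $\beta\lambda/\lambda=\beta\in\F_q^*$, completing (ii).

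The essential point — and the only genuine obstacle — is the opening observation that the two parametrisations \eqref{e:char_L} and \eqref{e:rechar_L} refer to the very same set in the very same coordinates. Once this is in hand, (i) reduces to a one-line norm computation and (ii) is a direct consequence of scatteredness; in particular no use of the finer structure of $\varphi$ (its linearised-polynomial coefficients, or the matrix of an underlying projectivity) is needed.
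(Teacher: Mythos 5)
Your proposal is correct and is essentially the paper's own argument: part (i) is obtained, as in the paper, by comparing the two parametrisations \eqref{e:char_L} and \eqref{e:rechar_L} in the common basis $v_1,v_2$ (you merely spell out the coefficient comparison $\lambda^\varphi\lambda^{-1}=\lambda'^{\,q-1}$ that the paper leaves implicit), and your fibre argument for (ii) --- that $|\F_{q^t}^*|/|\F_q^*|=\theta_{t-1}=|\Lb|$ forces the fibres of $\lambda\mapsto\la\lambda v_1+\lambda^\varphi v_2\ra_{q^t}$ to be exactly the $\F_q^*$-cosets --- is the same counting the paper performs via the map $\psi\colon\lambda\mapsto\lambda^\varphi\lambda^{-1}$, whose image has size $\theta_{t-1}$, yielding $\la\lambda\ra_q=\la\mu\ra_q$ if and only if $\lambda^\varphi\lambda^{-1}=\mu^\varphi\mu^{-1}$ and then specialising to $\mu=\beta\lambda$. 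No gap.
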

\begin{proof}
  The first statement follows by comparing (\ref{e:char_L}) and (\ref{e:rechar_L}).
  Taking into account (\ref{e:rechar_L}) once again, we have that the size $\theta_{t-1}$ of $\Lb$ equals
  the size of the image of
  $\psi:\,\F_{q^t}^*\rightarrow\{z\in\F_{q^t}\mid N(z)=1\}$
  defined by $\lambda^\psi=\lambda^\varphi\lambda^{-1}$.
  Since all non-zero elements of a one-dimensional $\Fq$-subspace have the same image,
  the images of any two $\F_q$-linearly independent vectors are distinct.
  This implies that for any nonzero $\lambda,\mu\in\F_{q^t}$,
  \[
    \langle\lambda\rangle_q=\langle\mu\ra_q\ \Leftrightarrow\ 
    \lambda^\varphi\lambda^{-1}=\mu^\varphi\mu^{-1}\ \Leftrightarrow\ 
    \frac{\mu^\varphi}{\lambda^\varphi}=\frac{\mu}{\lambda}.
  \]
  The thesis follows by applying the last equivalences to $\mu=\beta\lambda$.
\end{proof}
In the main result of this section (Theorem \ref{t:main}) the following fact 
on permutation polynomials, i.e., polynomials
in $\F_q[x]$ which are bijective maps, is needed.
\begin{theorem}\label{t:MC1963}\emph{\cite{Ca1960,MC1963}}
  Let $F$ be a field of order $q'=p^n$, 
	and\footnote{The condition $d>1$ is not explicitly stated in\cite{MC1963}, but is clear from the context. See also \cite{BrLe1973}.} 
  $1<d\mid q'-1$, say $q'-1=dm$.
  Define $\Psi_d(x)=x^m$ for $x\in F$.
  If $f(x)$ is a permutation polynomial on $F$ satisfying $f(0)=0$, $f(1)=1$, 
  and $\Psi_d\{f(x)-f(y)\}=\Psi_d(x-y)$ for all $x,y$, then
  an integer  $j$ satisfying $d\mid p^j-1$, $0\le j<n$ exists such that $f(x)=x^{p^j}$
  for all $x\in F$.
\end{theorem}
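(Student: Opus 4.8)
The plan is to recast the hypothesis multiplicatively, to check that the asserted maps qualify, and then to reduce everything to a rigidity statement forcing $f$ to be a Frobenius monomial. Since $m=(q'-1)/d$ with $d\mid q'-1$, the map $\Psi_d(x)=x^m$ sends $F^*$ onto the group $\mu_d$ of $d$-th roots of unity, and $\Psi_d(a)=\Psi_d(b)$ holds exactly when $a/b$ lies in the unique index-$d$ subgroup $H\le F^*$ of nonzero $d$-th powers (indeed $H$ is the set of elements of order dividing $m$). Hence the hypothesis $\Psi_d(f(x)-f(y))=\Psi_d(x-y)$ is equivalent to
\[
  \frac{f(x)-f(y)}{x-y}\in H\qquad\text{for all }x\ne y,
\]
and, fixing a multiplicative character $\chi$ of order $d$, to $\chi(f(x)-f(y))=\chi(x-y)$ for all $x\ne y$; thus $f$ preserves the colouring of the pairs of $F$ by the $H$-coset of their difference.

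First I would verify that the claimed solutions qualify: if $f(x)=x^{p^j}$ then $f(x)-f(y)=(x-y)^{p^j}$ by additivity of the Frobenius map, so the ratio above equals $(x-y)^{p^j-1}$, which lies in $H$ for every $x\ne y$ precisely when $(q'-1)\mid(p^j-1)m$, that is $d\mid p^j-1$; replacing $j$ by its residue modulo $n$ gives $0\le j<n$. This both confirms the statement and isolates what must be shown: that the hypotheses force $f$ to be additive and then monomial.

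Setting $y=0$ and using $f(0)=0$ yields $(f(x))^m=x^m$, i.e. $f(x)/x\in H$ for all $x\ne 0$. The central step is to promote this coset information to exact additivity. Writing $g_a(x)=f(x+a)-f(x)$ for fixed $a\ne 0$, injectivity of $f$ gives $g_a(x)\ne 0$ and $\chi(g_a(x))=\chi(a)$, so every value $g_a(x)$ lies in the single coset $aH$; what is needed is that $g_a$ is constant, equivalently $f(x+a)=f(x)+f(a)$ for all $x,a$. I expect this rigidity to be the main obstacle, and it is precisely here that both the permutation property and the assumption $d>1$ are essential. It amounts to determining the colour-preserving automorphisms of the cyclotomic (generalized Paley) structure on $(F,+)$, and it can be reached either through Weil-type estimates for the incomplete character sums that detect the exact values of $g_a$ (Carlitz's approach, decisive for large $q'$) or through the finer coincidence counting that covers every $q'$ (McConnell's approach).

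Once $f$ is known to be additive, I would write it as a $p$-polynomial $f(x)=\sum_{i=0}^{n-1}a_ix^{p^i}$. Additivity turns the hypothesis into the single requirement $f(x)/x\in H$ for all $x\ne 0$, i.e. the identity $f(x)^m=x^m$ on $F$. A direct, if slightly delicate, comparison of the two sides as reduced polynomials modulo $x^{q'}-x$ forces all but one of the $a_i$ to vanish, leaving $f(x)=a_jx^{p^j}$; the normalisations $f(0)=0$ and $f(1)=1$ give $a_j=1$, and the $d$-th-power condition once more yields $d\mid p^j-1$, completing the proof.
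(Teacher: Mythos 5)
Your opening reductions are sound: the translation of the $\Psi_d$-condition into difference quotients lying in the index-$d$ subgroup $H$ of $d$-th powers (equivalently $\chi(f(x)-f(y))=\chi(x-y)$ for a multiplicative character $\chi$ of order $d$), and the verification that $x^{p^j}$ satisfies the hypothesis exactly when $d\mid p^j-1$, are both correct. But the proposal has a genuine gap at precisely the point you yourself flag as ``the main obstacle'': you never prove that $f$ is additive. Writing that this rigidity ``can be reached either through Weil-type estimates\ldots (Carlitz's approach) or through the finer coincidence counting (McConnell's approach)'' is not an argument --- it is a pointer to the two papers from which the theorem is quoted. That additivity step \emph{is} the theorem: everything before it is bookkeeping, and everything after it is comparatively routine. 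By your own admission the character-sum route is only ``decisive for large $q'$'', so even the strategy you sketch does not cover the statement as given, and no detail of the counting argument that does cover all $q'$ is supplied. (Note that the paper itself offers no proof either: it imports the result verbatim from \cite{Ca1960,MC1963}, with \cite{BrLe1973} cited for the role of the hypothesis $d>1$. So the standard a blind attempt must meet is a self-contained proof, which this is not.)

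The closing step is also asserted rather than executed. Granting additivity, $f(x)=\sum_{i=0}^{n-1}a_ix^{p^i}$ and the hypothesis indeed collapses to $f(x)^m=x^m$ on $F$; but extracting from this that exactly one $a_i$ survives is not a ``direct comparison'' of reductions modulo $x^{q'}-x$: expanding $\bigl(\sum_i a_ix^{p^i}\bigr)^m$ produces heavy multinomial interference, and since $H$ is not closed under addition one cannot argue term by term --- a priori an additive bijection could have every ratio $f(x)/x$ in $H$ without being a single Frobenius monomial, and excluding this (here $d>1$ is again essential, since for $d=1$ every additive bijection qualifies) requires a genuine argument, such as the group-theoretic one in \cite{BrLe1973}. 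In sum: correct framing and a correct sufficiency check, but both substantive implications --- coset-rigidity forces additivity, and additivity plus $f(x)^m=x^m$ forces a monomial --- are left unproved.
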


\begin{theorem}\label{t:main}
  Let $\Sigma$ be a $q$-order canonical subgeometry of $\PG(t-1,q^t)$, $q>2$, $t\ge3$.
  Assume that $\Gamma$ and $\ell_0$ are a $(t-3)$-subspace and a line of $\PG(t-1,q^t)$, respectively,
  such that $\Sigma\cap\Gamma=\emptyset=\ell_0\cap\Gamma$.
  Then the following assertions are equivalent:
  \begin{enumerate}[(i)]
  \item   
    The set $p_{\Gamma,\ell_0}(\Sigma)$ is a scattered  $\F_q$-linear set of pseudoregulus type.
  \item
    A generator $\hs$ exists of the subgroup of $\PGaL(t,q^t)$ fixing pointwise $\Sigma$, 
    such that $\dim(\Gamma\cap\Gamma^\hs)=t-4$; furthermore, $\Gamma$ is not contained in the
    span of any hyperplane of $\Sigma$.
  \item
    There are a point $P_\Gamma$ and a generator $\hs$ of the subgroup of $\PGaL(t,q^t)$ 
    fixing pointwise $\Sigma$, such that 
    $\langle P_\Gamma,P^\hs_\Gamma,\ldots,P^{\hs^{t-1}}_\Gamma\rangle=\PG(t-1,q^t)$, and
    \begin{equation}\label{e:Gamma}
      \Gamma=\langle P_\Gamma,P^\hs_\Gamma,\ldots,P^{\hs^{t-3}}_\Gamma\rangle.
    \end{equation}
  \end{enumerate}
  Furthermore, if the conditions above are satisfied, then
  \begin{enumerate}[(a)]
  \item For a fixed $\hs$, the point $P_\Gamma$ satisfying (\ref{e:Gamma}) is unique;
  \item the transversal points are precisely $\ell_0\cap\la\Gamma,P_\Gamma^{\hs^i}\ra$, $i=t-2,t-1$.
  \end{enumerate}
\end{theorem}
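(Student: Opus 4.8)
The plan is to fix once and for all a generator $\hs$ of the (cyclic, order $t$) subgroup of $\PGaL(t,q^t)$ fixing $\Sigma$ pointwise; in the coordinates where $\Sigma$ is the set of points with entries in $\Fq$ this is the Frobenius collineation $\la(x_1,\ldots,x_t)\ra\mapsto\la(x_1^q,\ldots,x_t^q)\ra$, whose fixed-point set is exactly $\Sigma$ and whose remaining generators are the $\hs^k$ with $\gcd(k,t)=1$. I would establish (ii)$\Rightarrow$(iii) and (iii)$\Rightarrow$(ii) (yielding (ii)$\Leftrightarrow$(iii) geometrically), then (iii)$\Rightarrow$(i), and finally (i)$\Rightarrow$(iii), which is the only place where Theorem \ref{t:MC1963} enters. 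Throughout I would use that a subspace which is $\hs$-invariant as a whole is the span of a subgeometry of $\Sigma$, hence meets $\Sigma$: since $\Gamma\cap\Sigma=\emptyset$, no nonempty subspace contained in $\Gamma$ is $\hs$-invariant, and an $\hs$-fixed hyperplane is precisely the span of a hyperplane of $\Sigma$.

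For (iii)$\Rightarrow$(ii), assuming full span of the $t$ conjugates $P_\Gamma^{\hs^i}$, the $(t-3)$-spaces $\Gamma$ and $\Gamma^\hs$ together span the $(t-2)$-space $\la P_\Gamma,\ldots,P_\Gamma^{\hs^{t-2}}\ra$, so Grassmann gives $\dim(\Gamma\cap\Gamma^\hs)=t-4$; and if $\Gamma$ lay in an $\hs$-fixed hyperplane $H$ then, $H$ being $\hs$-invariant, $H$ would contain every conjugate, contradicting full span. For (ii)$\Rightarrow$(iii) I would study the chain $F_k=\bigcap_{i=0}^k\Gamma^{\hs^i}$: from $\dim(\Gamma\cap\Gamma^\hs)=t-4$, together with the observation that $F_k=F_k^\hs$ would produce an $\hs$-invariant subspace inside $\Gamma$, one shows inductively that every intersection drops dimension by exactly one, so $F_{t-3}$ is a single point $Q$ and $F_{t-2}=\emptyset$. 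Setting $P_\Gamma:=Q^{\hs^{-(t-3)}}$, the points $P_\Gamma^{\hs^i}=Q^{\hs^{i-(t-3)}}$ for $0\le i\le t-3$ all lie in $\Gamma$, and a second induction (using $F_{t-2}=\emptyset$ to rule out collapse) shows they span $\Gamma$; since $\bigcap_{i=0}^{t-3}\Gamma^{\hs^i}=\la P_\Gamma^{\hs^{t-3}}\ra$ depends only on $\Gamma$ and $\hs$, this $P_\Gamma$ is unique, which is (a). The non-containment hypothesis then upgrades this to full span: were the $t$ conjugates to span a proper, hence $\hs$-invariant, subspace, it would lie in an $\hs$-fixed hyperplane containing $\Gamma$ (the first $t-2$ conjugates already span $\Gamma$), contradicting (ii).

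For (iii)$\Rightarrow$(i) and (b) I would pass to the basis given by the independent conjugates, where $\hs$ acts as the Frobenius-twisted cyclic shift $(c_0,\ldots,c_{t-1})\mapsto(c_{t-1}^q,c_0^q,\ldots,c_{t-2}^q)$ and $\Gamma$ is the coordinate space $c_{t-2}=c_{t-1}=0$. Writing $\Sigma$ as the fixed-point set of $\hs$ gives the system $c_i^q=\rho c_{i+1}$ with $N(\rho)=1$, so $c_i=c_0^{q^i}\rho^{-\theta_{i-1}}$; projecting onto the last two coordinates and using $\theta_{t-2}=q\theta_{t-3}+1$ together with Hilbert~90 (every norm-one element is a $(q-1)$-st power) I expect the image to be exactly $\Lb_0=\{\la(\mu,\mu^q)\ra\}$, so $p_{\Gamma,\ell_0}(\Sigma)$ is of pseudoregulus type. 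In this model the coordinate points $\la(1,0)\ra$ and $\la(0,1)\ra$ are the transversal points and are the projections of $P_\Gamma^{\hs^{t-2}}$ and $P_\Gamma^{\hs^{t-1}}$, i.e. $\ell_0\cap\la\Gamma,P_\Gamma^{\hs^i}\ra$ for $i=t-2,t-1$, which is (b).

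The main obstacle is (i)$\Rightarrow$(iii). Here I would use the map $\varphi$ and Proposition \ref{p:phi}: normalising $f(x)=x^\varphi/1^\varphi$ produces a permutation polynomial with $f(0)=0$, $f(1)=1$, and, since $f$ is additive and $N(x^\varphi x^{-1})=1$, one has $(f(x)-f(y))^{\theta_{t-1}}=(x-y)^{\theta_{t-1}}$; as $q>2$ the divisor $d=q-1$ satisfies $1<d\mid q^t-1$, so Theorem \ref{t:MC1963} applies with $m=\theta_{t-1}$ and gives $f(x)=x^{p^j}$ with $q-1\mid p^j-1$, whence $\varphi(\lambda)=c\lambda^{q^s}$ for suitable $c$ and $s$, and Proposition \ref{p:phi}(ii) forces $\gcd(s,t)=1$. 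The delicate final step is to translate this monomial shape of $\varphi$ back into geometry: reversing the coordinate computation above, I would take $\hs$ to be the power of the Frobenius for which the exponent $q^s$ matches the cyclic shift, and recover a point $P_\Gamma$ with $\Gamma=\la P_\Gamma,\ldots,P_\Gamma^{\hs^{t-3}}\ra$ of full span, i.e. (iii). I expect verifying that the monomial $\varphi$ actually pins down $\Gamma$ in this cyclic form—rather than merely reproducing the set $\Lb$—to be the crux, and I would lean on the rigidity already obtained in the equivalence (ii)$\Leftrightarrow$(iii).
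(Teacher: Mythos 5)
Your architecture is sound and largely parallels the paper's ((i)$\Rightarrow$monomial $\varphi$ via McConnel; (iii)$\Rightarrow$(i) by the conjugate-basis computation; (a) and (b) as stated), but the step you yourself flag as ``the crux'' is a genuine gap, and it is precisely where the paper's proof does its real work. Your McConnel application is correct and complete ($f=\varphi/1^\varphi$ is additive, $N(\varphi(\lambda)/\lambda)=1$ gives the $\Psi_{q-1}$-compatibility, $q>2$ ensures $d=q-1>1$, and $q-1\mid p^j-1$ forces $f(x)=x^{q^s}$ with $\gcd(s,t)=1$ by Proposition~\ref{p:phi}(ii)). But ``lean on the rigidity of (ii)$\Leftrightarrow$(iii)'' is not an argument until you derive (ii) or (iii) from the monomial form, and your sketch gives no mechanism for recovering $\Gamma$ from $\varphi$. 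The missing idea, already latent in your own setup: $\hat\Gamma=\ker\varphi_1\cap\ker\varphi_2$, the kernel of $\varphi_1$ is the hyperplane $H:\sum_j\mu_{1j}X_j=0$, and since $a_j\in\Fq$ implies $a_j^{q^s}=a_j$, the monomial form yields $\mu_{2j}=c\,\mu_{1j}^{q^s}$, i.e.\ $\ker\varphi_2=H^{\hs}$ for the generator $\hs$ induced by $x\mapsto x^{q^s}$. Hence $\Gamma=H\cap H^{\hs}$, so $\Gamma\cap\Gamma^{\hs}=H\cap H^{\hs}\cap H^{\hs^2}$ has dimension at least $t-4$; scatteredness excludes both $\Gamma=\Gamma^{\hs}$ (an $\hs$-invariant $\Gamma$ would be the span of a subspace of $\Sigma$) and $\Gamma$ lying in the span of a hyperplane of $\Sigma$. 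That is exactly (ii), and your own (ii)$\Rightarrow$(iii) then closes the cycle; without identifying $\ker\varphi_2$ as the $\hs$-conjugate of $\ker\varphi_1$, the monomial shape of $\varphi$ gives you no hold on $\Gamma$, which enters the definition of $\varphi$ only through these two kernels.

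Two further points, one to repair and one to credit. In your (ii)$\Rightarrow$(iii), the invariance observation proves only the \emph{strict} drop $F_{k+1}\subsetneq F_k$ (if $F_{k+1}=F_k$ then $F_k\subseteq F_k^{\hs}$, equality by dimension, so $F_k$ is $\hs$-invariant and would meet $\Sigma$ inside $\Gamma$); to get ``exactly one'' — which you need for $F_{t-3}\neq\emptyset$ — add that $F_{k+1}=F_k\cap\bigl(\Gamma^{\hs^k}\cap\Gamma^{\hs^{k+1}}\bigr)$, where $\Gamma^{\hs^k}\cap\Gamma^{\hs^{k+1}}$ is a hyperplane of $\Gamma^{\hs^k}\supseteq F_k$ by the conjugated hypothesis $\dim(\Gamma\cap\Gamma^{\hs})=t-4$, so the drop is also at most one. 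With that line inserted, your second induction ($Q^{\hs^{-j}}\in F_{t-3-j}\setminus F_{t-2-j}$, the exclusion coming from $F_{t-2}=\emptyset$) does prove independence, and this is a genuinely different, hyperplane-free route: the paper instead proves $H\neq H^{\hs}$ via the non-containment hypothesis, writes $F_k=\bigcap_{j=0}^{k+1}H^{\hs^j}$, and gets independence of $\{Q,Q^{\hs},\ldots,Q^{\hs^{t-1}}\}$ in one stroke from the dual simplex of the $t$ hyperplanes $H^{\hs^j}$ — shorter, but your intrinsic chain argument is a legitimate alternative. Finally, beware the ``fix $\hs$ once and for all'' framing: (ii) and (iii) quantify existentially over generators and, as the paper's remark after the theorem shows, the property $\dim(\Gamma\cap\Gamma^{\hs})=t-4$ genuinely depends on the choice of generator; correspondingly, in (iii)$\Rightarrow$(i) a general generator produces $\{\la(\mu,\mu^{q^\nu})\ra_{q^t}\mid\mu\in\F_{q^t}^*\}$ with $\gcd(\nu,t)=1$, which is of pseudoregulus type by the equivalence quoted in the introduction, not literally $\Lb_0$.
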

\begin{proof}
  (i) $\Rightarrow$ (ii).
  Theorem \ref{t:MC1963} for $f=\varphi/(1^\varphi)$, $q'=q^t$ and $d=q-1$ together with Proposition\ 
  \ref{p:phi} imply that an $\alpha\in\F_{q^t}$ with $N(\alpha)=1$ and an integer $\nu$ with $\gcd(t,\nu)=1$ exist
  such that
  $\lambda^\varphi=\alpha \lambda^{q^\nu}$ for any $\lambda\in\F_{q^t}$.
  The map 
  \begin{equation}\label{e:nu}
    \sigma:x\mapsto x^{q^\nu}
  \end{equation}
  induces a generator $\hs$ 
  of the subgroup of $\PGaL(t,q^t)$ fixing pointwise $\Sigma$, and $\Gamma$ is intersection of
  the hyperplanes $H$ of equation
  $\sum_{j=1}^t\mu_{1j}X_j=0$ and $H^\hs$ of equation
  $\sum_{j=1}^t\mu_{1j}^\sigma X_j=0$.
  This implies that $\Gamma\cap\Gamma^\hs=H\cap H^\hs\cap H^{\hs^2}$ has dimension at least $t-4$.
  If $\Gamma\cap\Gamma^\hs$ had dimension $t-3$, that is $\Gamma=\Gamma^\hs$,
  then $\Gamma$ would be the span of a subspace of the canonical subgeometry $\Sigma$
  (see e.g. \cite[Lemma (3.2)]{Lu1984}), contradicting the assumption that
  $p_{\Gamma,\ell_0}(\Sigma)$ is scattered.
  By the same assumption, 
  $\Gamma$ is not contained in the span of any hyperplane of $\Sigma$.
  
  (ii) $\Rightarrow$ (iii).
  Since  $\Sigma=\{\la a\ra_{q^t}\mid a\in(\F_q^t)^*\}$ is the standard canonical subgeometry,
  the map $\hs$ can be identified with an automorphism $\sigma$ of order $t$ of $\F_{q^t}$.
  Let $H^\hs$ be the hyperplane spanned by $\Gamma$ and $\Gamma^\hs$.
  Hence $\Gamma\subseteq H\cap H^\hs$.
  The equation $H=H^\hs$ would imply that $H$ is the span of a hyperplane of $\Sigma$, 
  a contradiction, so $\Gamma= H\cap H^\hs$.
  It holds
  \[
    \bigcap_{j=0}^{t-3}\Gamma^{\hs^j}=\bigcap_{j=0}^{t-2}H^{\hs^j}\neq\emptyset.
  \]
  If $\dim\left(\bigcap_{j=0}^{t-3}\Gamma^{\hs^j}\right)>0$, then 
  $\bigcap_{j=0}^{t-1}H^{\hs^j}\neq\emptyset$, implying that both $H$ and $\Gamma$ contain a
  point of $\Sigma$, a contradiction.
  So, $\bigcap_{j=0}^{t-3}\Gamma^{\hs^j}$ is one point, say 
  \begin{equation}\label{e:Q}
    Q=\bigcap_{j=0}^{t-2}H^{\hs^j}.
  \end{equation} 
	Since for $i\in\{0,1,\ldots,t-1\}$ the point $Q^{\hs^i}$ is the intersection of $t-1$ hyperplanes 
	in the independent set $\{H,H^{\hs},\ldots,H^{\hs^{t-1}}\}$, the set of points $\{Q,Q^{\hs},\ldots,Q^{\hs^{t-1}}\}$ is independent. 
  Define $P_\Gamma=Q^{\hs^3}$. 
  Because of (\ref{e:Q}), for $i=0,1,\ldots,t-3$ it holds $P_\Gamma^{\hs^i}\in\Gamma$. 
  So, (\ref{e:Gamma}) is true.  
  
  (iii) $\Rightarrow$ (i).
  Let $P_\Gamma=\la(b_1,b_2,\ldots,b_t)\ra_{q^t}$.
  The line $\ell_0$ such that $\ell_0\cap\Gamma=\emptyset$ is immaterial, 
  so assume $\ell_0=\la P_\Gamma^{\hs^{t-2}},P_\Gamma^{\hs^{t-1}}\ra$.
  The projectivity $\kappa\in\PGL(t,q^t)$ related to the matrix
  \begin{equation}
	\label{e:kappa}
  A=\begin{pmatrix} b_1&b_1^\sigma&\ldots&b_1^{\sigma^{t-1}}\\ 
  b_2&b_2^\sigma&\ldots&b_2^{\sigma^{t-1}}\\ \vdots&&&\\
  b_t&b_t^{\sigma}&\ldots&b_t^{\sigma^{t-1}}\end{pmatrix}^{-1}
  \end{equation}
  maps $\ell_0$ and $\Gamma$ into the subspaces of equations $X_1=X_2=\ldots=X_{t-2}=0$ and $X_{t-1}=X_t=0$,
  respectively.
  Note that the points of the canonical subgeometry $\{\la (\lambda,\lambda^\sigma,\ldots,\lambda^{\sigma^{t-1}})\ra_{q^t} \mid \lambda\in \F_{q^t}^*\}$ are mapped by $\kappa^{-1}$ into points of $\Sigma$, so by a cardinality argument 
  it holds $\Sigma^\kappa=\{\la (\lambda,\lambda^\sigma,\ldots,\lambda^{\sigma^{t-1}})\ra_{q^t} \mid \lambda\in \F_{q^t}^*\}$. 
  Observing that
  \[
   p_{\Gamma^\kappa,{\ell_0}^\kappa}(\Sigma^\kappa)=
   \{\la(0,\ldots,0,\mu,\mu^\sigma)\ra_{q^t}\mid \mu\in\F_{q^t}^*\}
  \]
  is a scattered linear set of pseudoregulus type completes the first part of the proof.
We remark here that 
\begin{equation}\label{e:tau}
    \kappa^{-1}\hs\kappa:\,\la(x_1,x_2,\ldots,x_t)\ra_{q^t}
    \mapsto \la(x_t^\sigma,x_1^\sigma,\ldots,x_{t-1}^\sigma)\ra_{q^t}.
  \end{equation}. 

\begin{sloppypar}
  If a point $R$ satisfies $\la R, R^\hs,\ldots,R^{\hs^{t-1}}\ra=\PG(t-1,q^t)$ and 
  $\la R, R^\hs,\ldots,R^{\hs^{t-3}}\ra=\Gamma$,
  then $\bigcap_{i=0}^{t-3}\Gamma^{\hs^i}=R^{\hs^{t-3}}$, that is $R=P_\Gamma$.
  This proves (a).
\end{sloppypar}
  
  \begin{sloppypar}
  As regards (b), note that the 
  transversal points of $p_{\Gamma^\kappa,{\ell_0}^\kappa}(\Sigma^\kappa)$
  are $\la(0,\ldots,0,1,0)\ra_{q^t}$ and $\la(0,\ldots,0,1)\ra_{q^t}$. Applying $\kappa^{-1}$ it follows 
  that the transversal points of $p_{\Gamma,\ell_0}(\Sigma)$ are $P_\Gamma^{\hs^{t-2}}$ and $P_\Gamma^{\hs^{t-1}}$, respectively.
  \end{sloppypar}
\end{proof}

\begin{remark}
  The property $\dim(\Gamma\cap\Gamma^{\hs})=t-4$ in Theorem \ref{t:main} does not hold in general
  for any $\hs$.
  As a matter of fact, if $\gcd(s,t)=1$, then the projection of the canonical subgeometry 
	$\{\la(\lambda,\lambda^{\hs},\ldots,\lambda^{\hs^{t-1}})\ra_{q^t}\mid\lambda\in\F_{q^t}^*\}$ from
  the center $\Gamma_s$ of equation
  $X_{t-s}=X_{t}=0$ is a scattered linear set of pseudoregulus type but, if $1<s<t-1$,
  $\Gamma_s\cap\Gamma_s^{\kappa^{-1}\hs\kappa}$ is a $(t-5)$-subspace.
\end{remark}

\begin{remark}
\label{r:uniqueness}
  By Theorem \ref{t:main} (b), there are precisely two collineations $\hs$ satisfying the conditions 
  (i), (ii), (iii). They are inverse of each other.
\end{remark}

The \emph{splash} of a $q$-order canonical subgeometry $\Sigma$ of $\PG(t-1,q^t)$ on a line $\ell$ is the set of all intersections of $\ell$ 
(not contained in the span of a hyperplane of $\Sigma$) with the spans of the hyperplanes of $\Sigma$, and is always a linear set \cite{LaZa2015}. The relationship between tangent splashes and linear sets has been dealt with in \cite{LaZa2014_3}.
By dualizing Theorem \ref{t:main} one obtains a characterization of
the linear sets of pseudoregulus type among all exterior splashes.
\begin{theorem}\label{t:splash}
  Let $\ell$ be a line in $\PG(t-1,q^t)$, exterior to a $q$-order canonical subgeometry $\Sigma$ and 
	not contained in the span of a hyperplane of $\Sigma$. 
  The splash $\Lb$ of $\Sigma$ on $\ell$ is a scattered $\F_q$-linear set of pseudoregulus type if and only if
  a generator $\hs$ exists of the subgroup of $\PGaL(t,q^t)$ fixing pointwise $\Sigma$, such that
  $\ell\cap\ell^\hs\neq\emptyset$.
  In this case the transversal points of $\Lb$ are $P=\ell\cap\ell^\hs$ and $P'=\ell\cap\ell^{\hs^{-1}}$. \Qed
\end{theorem}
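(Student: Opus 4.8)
The plan is to prove Theorem \ref{t:splash} exactly as the preceding paragraph promises, by dualizing Theorem \ref{t:main}. I would fix the standard polarity $\delta$ of $\PG(t-1,q^t)$ attached to the symmetric bilinear form $B(x,y)=\sum_{i=1}^t x_iy_i$. Since $\Sigma$ is the standard canonical subgeometry (points with coordinates in $\F_q$), the span of any hyperplane of $\Sigma$ has an equation with coefficients in $\F_q$, so its $\delta$-image is again a point of $\Sigma$; hence $\Sigma$ is self-dual, $\Sigma^\delta=\Sigma$. I would also record that $\delta$ commutes with every generator $\hs\colon\langle x\rangle_{q^t}\mapsto\langle x^\sigma\rangle_{q^t}$ of the subgroup of $\PGaL(t,q^t)$ fixing $\Sigma$ pointwise: because $B(x,v)=0\iff B(x^\sigma,v^\sigma)=0$, one checks directly that $\hs\delta=\delta\hs$.

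Next I would set up the splash-projection dictionary. A point $\ell\cap\langle H\rangle$ of the splash, with $\langle H\rangle$ the span of a hyperplane $H$ of $\Sigma$, is sent by $\delta$ to the hyperplane $\langle\ell^\delta,\langle H\rangle^\delta\rangle=\langle\Gamma,R\rangle$, where $\Gamma:=\ell^\delta$ is a $(t-3)$-subspace and $R:=\langle H\rangle^\delta$ ranges over all points of $\Sigma$. Thus $\delta$ carries the splash bijectively onto the pencil $\{\langle\Gamma,R\rangle\mid R\in\Sigma\}$ of hyperplanes through $\Gamma$, which is precisely $p_{\Gamma,\ell_0}(\Sigma)$ read in the quotient line $\PG(t-1,q^t)/\Gamma$. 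Translating the hypotheses, ``$\ell$ exterior to $\Sigma$'' becomes ``$\Gamma$ is not contained in the span of any hyperplane of $\Sigma$,'' and ``$\ell$ not contained in the span of a hyperplane of $\Sigma$'' becomes ``$\Gamma\cap\Sigma=\emptyset$,'' so the configuration $(\Sigma,\Gamma,\ell_0)$ satisfies the standing assumptions of Theorem \ref{t:main}. One point to be secured here is that $\delta$ preserves the property of being a scattered $\F_q$-linear set of pseudoregulus type: the correlation induced between the two copies of $\PG(1,q^t)$ lies in the full collineation-correlation group, under which the $\PGL(2,q^t)$-orbit of $\Lb_0$ is invariant (concretely, the $\delta$-image of $\Lb_0$ is projectively equivalent to $\{\langle(\mu,\mu^{q^{t-1}})\rangle_{q^t}\mid\mu\in\F_{q^t}^*\}$, again of pseudoregulus type since $\gcd(t-1,t)=1$; cf. \cite{LaZa2015}).

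With the dictionary in place the equivalence is immediate in both directions at once. By Theorem \ref{t:main} the projection $p_{\Gamma,\ell_0}(\Sigma)$ is of pseudoregulus type if and only if there is a generator $\hs$ fixing $\Sigma$ pointwise with $\dim(\Gamma\cap\Gamma^{\hs})=t-4$ (the other part of condition (ii), that $\Gamma$ avoid every hyperplane span, being guaranteed by the exteriority of $\ell$). Applying $\delta$ and using $\hs\delta=\delta\hs$ together with $\delta(\Gamma)=\ell$ gives
\[
  \delta(\Gamma\cap\Gamma^{\hs})=\langle\delta(\Gamma),\delta(\Gamma^{\hs})\rangle=\langle\ell,\ell^{\hs}\rangle,
\]
whose projective dimension is $(t-2)-(t-4)=2$. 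Since two lines span a plane exactly when they meet, $\dim(\Gamma\cap\Gamma^{\hs})=t-4$ is equivalent to $\ell\cap\ell^{\hs}\neq\emptyset$, which is the asserted condition.

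Finally, for the transversal points I would dualize Theorem \ref{t:main}(b): there the transversal points of the projection are the hyperplanes $\langle\Gamma,P_\Gamma^{\hs^{i}}\rangle$, $i=t-2,t-1$, and their $\delta$-images are the points $\ell\cap(P_\Gamma^{\perp})^{\hs^{i}}$ of $\ell$, using $\delta(P_\Gamma^{\hs^i})=(P_\Gamma^{\perp})^{\hs^i}$. Carrying out the identification should show that these two points coincide with $\ell\cap\ell^{\hs}$ and $\ell\cap\ell^{\hs^{-1}}$, consistently with the relation $\ell\cap\ell^{\hs^{-1}}=(\ell\cap\ell^{\hs})^{\hs^{-1}}$ and with the two mutually inverse generators of Remark \ref{r:uniqueness}. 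I expect this last bookkeeping, pinning down through a correlation exactly which transversal point is the $\delta$-image of which hyperplane, to be the main technical obstacle; everything else is formal once $\Sigma^\delta=\Sigma$ and $\hs\delta=\delta\hs$ are in hand.
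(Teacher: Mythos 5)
Your route is exactly the one the paper intends: the paper gives no written proof of Theorem \ref{t:splash} (it is stated with a bare \Qed{} after the remark that it follows ``by dualizing Theorem \ref{t:main}''), and your dictionary is the correct way to carry that out. The choices $\Sigma^\delta=\Sigma$, $\hs\delta=\delta\hs$, $\Gamma=\ell^\delta$, and the translation of hypotheses (exteriority of $\ell$ $\leftrightarrow$ $\Gamma$ in no hyperplane-span of $\Sigma$; $\ell$ in no hyperplane-span $\leftrightarrow$ $\Gamma\cap\Sigma=\emptyset$) are all right, and the dimension count $\dim(\Gamma\cap\Gamma^{\hs})=t-4\Leftrightarrow\dim\la\ell,\ell^{\hs}\ra=2$ is correct. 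Incidentally, your worry about invariance of the pseudoregulus property under the correlation can be dispatched more cheaply: since $\delta$ comes from a bilinear (not sesquilinear) form, the composite map $X\in\ell\mapsto X^\delta\cap\ell_0$ is itself a projectivity from $\ell$ onto $\ell_0$ carrying the splash onto $p_{\Gamma,\ell_0}(\Sigma)$, so no orbit argument for $\Lb_0$ is needed.

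Two loose ends remain, one of which you explicitly left open. First, ``two lines span a plane exactly when they meet'' requires $\ell\neq\ell^{\hs}$, and the stated condition $\ell\cap\ell^{\hs}\neq\emptyset$ does not by itself exclude $\ell=\ell^{\hs}$; you must rule this out, and exteriority does it: if $\ell=\ell^{\hs}$ then $\ell$ is fixed by the whole group $\la\hs\ra$, hence is the span of a line of $\Sigma$ by \cite[Lemma (3.2)]{Lu1984} (the same fact the paper invokes for $\Gamma=\Gamma^{\hs}$), so $\ell$ would contain points of $\Sigma$. Second, the transversal-point ``bookkeeping'' you flagged as the main obstacle is in fact a one-line consequence of \eqref{e:Gamma}: since $\Gamma=\la P_\Gamma,\ldots,P_\Gamma^{\hs^{t-3}}\ra$, one has $\la\Gamma,P_\Gamma^{\hs^{t-2}}\ra=\la\Gamma,\Gamma^{\hs}\ra$ and, using $\hs^{-1}=\hs^{t-1}$, also $\la\Gamma,P_\Gamma^{\hs^{t-1}}\ra=\la\Gamma,\Gamma^{\hs^{-1}}\ra$; applying $\delta$ and $\hs\delta=\delta\hs$, the splash points corresponding via your dictionary to the two transversal hyperplanes of Theorem \ref{t:main}(b) are $\Gamma^\delta\cap(\Gamma^{\hs})^\delta=\ell\cap\ell^{\hs}$ and $\ell\cap\ell^{\hs^{-1}}$, exactly as the statement claims. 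With these two points secured, your proof is complete and coincides with the paper's intended argument.
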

For $t=3$ the carriers defined in \cite{BaJa2014} are the transversal points \cite[Theorem 4.2]{BaJa2014}.

\section{\texorpdfstring{Normal $\Fq$-rational curves}{Normal Fq-rational curves}}\label{s:NRC}

In this section we collect results that will be used in the proof of Theorem \ref{t:subl-v-carriers},
but can have their own interest.
Fix a generator $\sigma$ of $\Gal(\Fqt/\Fq)$ ($t\ge3$); it induces an element $\hs$ of $\PGaL(t,q^t)$.
It holds
\begin{proposition}\label{p:immaginario}\emph{\cite[Lemma 3.51]{FiFi}}
  Let $P=\la(\alpha_1,\alpha_2,\ldots,\alpha_t)\ra_{q^t}$ be a point of $\PG(t-1,q^t)$.
  The following conditions are equivalent:
  \begin{enumerate}[(i)]
    \item
    $\la P,P^\hs,\ldots,P^{\hs^{t-1}}\ra=\PG(t-1,q^t)$;
    \item 
    $\alpha_1,\alpha_2,\ldots,\alpha_t$ are $\Fq$-linearly independent;
    \item
    \begin{equation}\label{e:det-imm}
      \det\begin{pmatrix}\alpha_1&\alpha_2&\ldots&\alpha_t\\
      \alpha^\sigma_1&\alpha^\sigma_2&\ldots&\alpha^\sigma_t\\
      \vdots&&&\vdots\\
      \alpha_1^{\sigma^{t-1}}&\alpha_2^{\sigma^{t-1}}&\ldots&\alpha_t^{\sigma^{t-1}}\end{pmatrix}\neq0.
    \end{equation}
  \end{enumerate}
\end{proposition}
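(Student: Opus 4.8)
The plan is to prove the equivalence of the three conditions in Proposition~\ref{p:immaginario} by establishing the cycle $(i)\Rightarrow(ii)\Rightarrow(iii)\Rightarrow(i)$, or more efficiently by showing $(ii)\Leftrightarrow(iii)$ directly and then $(i)\Leftrightarrow(iii)$. The key observation is that the point $P^{\hs^j}$ is represented by the vector $(\alpha_1^{\sigma^j},\alpha_2^{\sigma^j},\ldots,\alpha_t^{\sigma^j})$, since $\hs$ is induced by the field automorphism $\sigma$ acting coordinatewise. Therefore the $t$ points $P,P^\hs,\ldots,P^{\hs^{t-1}}$ span $\PG(t-1,q^t)$ if and only if the $t\times t$ matrix whose rows are these representative vectors has nonzero determinant. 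This matrix is exactly the one appearing in~(\ref{e:det-imm}), which immediately yields $(i)\Leftrightarrow(iii)$ and reduces the whole proposition to the equivalence $(ii)\Leftrightarrow(iii)$.

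For the equivalence $(ii)\Leftrightarrow(iii)$, I would invoke the classical theory of the \emph{Moore determinant} (or $\sigma$-Vandermonde / Dickson determinant). The matrix in~(\ref{e:det-imm}) has entries $\alpha_i^{\sigma^j}$, and a well-known result states that such a determinant is nonzero precisely when the elements $\alpha_1,\ldots,\alpha_t$ are linearly independent over the fixed field $\Fq$ of $\sigma$. First I would record the standard formula expressing this determinant (up to a nonzero scalar) as a product over a system of representatives of the nonzero $\Fq$-linear combinations, or alternatively I would argue directly: if the $\alpha_i$ were $\Fq$-linearly dependent, say $\sum_i c_i\alpha_i=0$ with $c_i\in\Fq$ not all zero, then applying $\sigma^j$ and using $c_i^{\sigma}=c_i$ gives $\sum_i c_i\alpha_i^{\sigma^j}=0$ for every $j$, exhibiting a nontrivial linear dependence among the \emph{columns} of the matrix, hence a zero determinant. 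Conversely, if the determinant vanishes, the columns are dependent over $\F_{q^t}$, and a descent argument using the action of $\sigma$ on a minimal-length dependence relation forces the coefficients into $\Fq$, producing an $\Fq$-linear dependence of the $\alpha_i$.

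The main obstacle is the converse direction of $(ii)\Leftrightarrow(iii)$, namely deducing $\Fq$-linear \emph{dependence} of the $\alpha_i$ from the vanishing of the determinant; the forward implication (dependence $\Rightarrow$ zero determinant) is the easy transpose-symmetry argument sketched above. The clean way to handle the hard direction is the descent argument: take a nontrivial dependence $\sum_i c_i\alpha_i^{\sigma^{j}}=0$ among the columns with the coefficient vector $(c_1,\ldots,c_t)\in\F_{q^t}^t$ chosen of minimal support; applying $\sigma$ to the relation and subtracting a suitable $\F_{q^t}$-multiple yields a shorter relation unless all ratios $c_i/c_{i_0}$ are $\sigma$-fixed, i.e.\ lie in $\Fq$, which then gives the required $\Fq$-dependence after normalization. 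Since the proposition is attributed to \cite[Lemma 3.51]{FiFi}, I would keep this argument brief and simply cite the Moore-determinant criterion, remarking that it is the $t\times t$ analogue of the Vandermonde nonvanishing condition with the field automorphism $\sigma$ playing the role of the variable specializations.
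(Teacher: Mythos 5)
Your proposal is correct, but there is nothing in the paper to compare it against: the proposition is stated without proof, as a citation of \cite[Lemma 3.51]{FiFi}, which is exactly the Moore-determinant criterion you invoke. So your argument is a sound reconstruction of the standard proof of the cited lemma rather than an alternative to an argument in the paper. Both of your reductions work: (i)$\Leftrightarrow$(iii) is immediate since $P^{\hs^j}$ is represented by $(\alpha_1^{\sigma^j},\ldots,\alpha_t^{\sigma^j})$ and $t$ points span $\PG(t-1,q^t)$ if and only if the matrix of representative vectors is nonsingular (independently of the scalar choices for the representatives); and for (ii)$\Leftrightarrow$(iii) the column-dependence direction and the descent are the classical argument. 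One detail you gloss and should make explicit in the descent: applying $\sigma$ to the relation $\sum_i c_i\alpha_i^{\sigma^j}=0$ yields a relation on the entries of row $j+1$, and it is only because $\sigma^t=\mathrm{id}$ (so the row indices cycle modulo $t$) that the twisted vector $(c_1^\sigma,\ldots,c_t^\sigma)$ again annihilates all $t$ rows; with that observed, minimality of support plus the normalization $c_{i_0}=1$ forces $c_i^\sigma=c_i$, i.e.\ $c_i\in\Fq$, as you claim. It is also worth noting, as your argument implicitly requires, that the criterion holds for any generator $\sigma\colon x\mapsto x^{q^\nu}$, $\gcd(\nu,t)=1$, of $\Gal(\Fqt/\Fq)$ --- not only the Frobenius --- precisely because the fixed field of such a $\sigma$ is still $\Fq$; this is the generality in which the paper uses the proposition.
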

A point $P$ satisfying the conditions in Proposition\ \ref{p:immaginario} will be called an 
\emph{imaginary point}.
From now on $\PG(t-1,q)$ is considered to be the set of all $\Fq$-rational points of $\PG(t-1,q^t)$.
\begin{proposition}
  If $\tau\in\PGL(t,q)$ and $P$ is an imaginary point in $\PG(t-1,q^t)$, then $P^\tau$ is an
  imaginary point. \Qed
\end{proposition}
\begin{proposition}\label{p:t+2pts}
  Assume $t$ is prime.
  Let $Q_1$ and $Q_2$ be two distinct $\Fq$-rational points in $\PG(t-1,q^t)$.
  If $P$ is an imaginary point, then no $t$ of the points $Q_1$, $Q_2$, $P$, $P^\hs$, $\ldots$,
  $P^{\hs^{t-1}}$ lie on a hyperplane.
\end{proposition}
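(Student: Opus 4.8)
The plan is to prove the equivalent statement that any $t$ of the $t+2$ points have linearly independent representative vectors, since $t$ points lie on a common hyperplane of $\PG(t-1,q^t)$ precisely when their representatives span a space of dimension at most $t-1$, i.e.\ are linearly dependent. Because $P$ is imaginary, Proposition~\ref{p:immaginario} tells us that $P,P^\hs,\ldots,P^{\hs^{t-1}}$ form a basis of the underlying vector space, and I would work in this basis throughout, so that $P^{\hs^j}$ becomes the standard basis vector $e_j$ (indexing $j=0,\ldots,t-1$). Selecting $t$ of the $t+2$ points amounts to discarding two of them, which yields three cases: discarding $Q_1,Q_2$ (the survivors are the basis, so there is nothing to check); discarding one $Q_i$ together with one orbit point; or discarding two orbit points.

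The crucial preliminary step is to compute the coordinates of an $\Fq$-rational point $Q_i$ in this basis. Writing $u_i\in\Fq^t$ for a representative and $v$ for a representative of $P$, I would expand $u_i=\sum_{k=0}^{t-1}d_{i,k}v^{\sigma^k}$ and then apply $\sigma$ componentwise. Since $u_i$ is $\sigma$-fixed while $v^{\sigma^k}$ is sent to $v^{\sigma^{k+1}}$ (indices mod $t$), comparing coordinates in the basis forces $d_{i,k}=d_{i,k-1}^\sigma$, hence $d_{i,k}=\beta_i^{\sigma^k}$ with $\beta_i:=d_{i,0}$. Thus $Q_i$ has coordinate vector $(\beta_i,\beta_i^\sigma,\ldots,\beta_i^{\sigma^{t-1}})$, and $\beta_i\neq0$ because $u_i\neq0$. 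This structure is exactly what makes the ensuing determinants transparent.

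With this in hand, each remaining case reduces to a small determinant obtained after using the surviving standard basis vectors $e_j$ to clear their columns. When one orbit point $P^{\hs^a}$ is discarded together with one $Q_i$, the determinant collapses to the single surviving entry $\beta_{3-i}^{\sigma^a}\neq0$, so those points are independent. When two orbit points $P^{\hs^a},P^{\hs^b}$ with $a\neq b$ are discarded, the determinant collapses to the $2\times2$ minor $\beta_1^{\sigma^a}\beta_2^{\sigma^b}-\beta_1^{\sigma^b}\beta_2^{\sigma^a}$. Assuming this vanishes and setting $\gamma=\beta_1/\beta_2$ (legitimate since $\beta_2\neq0$), I would obtain $\gamma^{\sigma^a}=\gamma^{\sigma^b}$, i.e.\ $\gamma^{\sigma^{a-b}}=\gamma$.

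The one genuinely delicate point—and the place where the hypothesis that $t$ is prime is indispensable—is concluding from $\gamma^{\sigma^{a-b}}=\gamma$ that $\gamma\in\Fq$. Since $0<|a-b|<t$ and $t$ is prime, $\gcd(a-b,t)=1$, so $\sigma^{a-b}$ still generates $\Gal(\Fqt/\Fq)$ and its fixed field is exactly $\Fq$; hence $\gamma\in\Fq$. But then $\beta_1=\gamma\beta_2$ with $\gamma$ being $\sigma$-fixed gives $(\beta_1,\ldots,\beta_1^{\sigma^{t-1}})=\gamma(\beta_2,\ldots,\beta_2^{\sigma^{t-1}})$, forcing $Q_1=Q_2$ and contradicting the hypothesis. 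This contradiction closes the last case and completes the argument. I expect this final field-fixing step, rather than the determinant bookkeeping, to be the part requiring the most care, as it is precisely where the primality of $t$ is used.
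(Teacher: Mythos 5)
Your proof is correct and complete, and in the decisive case it runs along a genuinely different track from the paper's. The case split (how many of $Q_1,Q_2$ the $t$-set contains) is the same, but the machinery is not: the paper works dually and never coordinatizes the rational points. With exactly one rational point it notes that $H,H^{\hs},\ldots,H^{\hs^{t-1}}$ are independent points of the dual space, so $\bigcap_i H^{\hs^i}=\emptyset$, whereas a rational point lying in $H$ would lie in every conjugate; with both rational points it writes $K=\la S\ra$ as $X_1+\lambda X_{1+s}=0$ in the conjugate basis, deduces from $Q_1,Q_2\in K\cap K^{\hs}\cap\cdots\cap K^{\hs^{t-2}}$ that the conjugate equations $X_i+\lambda^{\sigma^{i-1}}X_{i+s}=0$, $i=1,\ldots,t-1$, are linearly dependent, and a multiset-parity count over a minimal dependent subset forces $s$ to be a nontrivial divisor of $t$. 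Your key lemma --- that in the basis $P,P^{\hs},\ldots,P^{\hs^{t-1}}$ every $\Fq$-rational point has coordinate vector $(\beta,\beta^{\sigma},\ldots,\beta^{\sigma^{t-1}})$ with $\beta\neq0$ --- replaces both of these steps by Moore-type determinant evaluations, with primality entering only through the fact that $\sigma^{a-b}$ still generates $\Gal(\Fqt/\Fq)$, so its fixed field is $\Fq$ and the vanishing minor forces $Q_1=Q_2$. What your version buys: explicit coordinates, a fully elementary computation, and a transparent localization of where primality is indispensable --- for composite $t$ your minor can vanish with $\gamma\in\F_{q^{s}}\setminus\Fq$ for $s=\gcd(a-b,t)>1$, which is precisely the phenomenon behind the paper's $\PG(3,q^4)$ counterexample in the remark following the proposition. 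What the paper's version buys: it stays inside the hyperplane/conjugation formalism already set up around \eqref{e:tau}, avoiding the coordinate computation for rational points, and it isolates the obstruction as the divisibility $s\mid t$ of the index gap rather than as a field-membership condition on $\gamma$.
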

\begin{proof}
  Let $S$ be a $t$-set contained in $\{Q_1,Q_2,P,P^\hs,\ldots,P^{\hs^{t-1}}\}$.
  If $S$ does not contain $\Fq$-rational points, then the thesis follows from the definition of an imaginary point.
  If $S$ contains exactly one $\Fq$-rational point, say $Q_1$, then $H=\la S\setminus\{Q_1\}\ra$ 
  is a hyperplane spanned by $t-1$ of
  the points $P$, $P^\hs$, $\ldots$, $P^{\hs^{t-1}}$, so $H$, $H^\hs$, $\ldots$, 
  $H^{\hs^{t-1}}$
  are independent points of the dual space; in particular, their intersection is empty.
  If $Q_1\in H$, then $Q_1\in H\cap H^\hs\cap\ldots\cap H^{\hs^{t-1}}$, a contradiction.

  It only remains to prove that if $Q_1,Q_2\in S$, then $S$ is independent.
  Let $r$ and $r+s$, $0\le r<r+s\le t-1$, be the indices such that $P^{\hs^r}$, $P^{\hs^{r+s}}\not\in S$.
  Assume $S$ is a dependent set of points.  
  Like in the proof of Theorem \ref{t:main}, take homogeneous coordinates $x_1$, $x_2$, $\ldots$, $x_t$ 
  such that $P$, $P^\hs$, $\ldots$, $P^{\hs^{t-1}}$
  are base points (i.e. all their coordinates but one are equal to zero), 
  with first base point $P^{\hs^r}$, 
  and where $\hs$ acts as in \eqref{e:tau}. 
   If $P^{\hs^r}\in\la S\ra$, then
  $Q_1,Q_2\in\la\{P,P^\hs,\ldots,P^{\hs^{t-1}}\}\setminus\{P^{\hs^{r+s}}\}\ra$, 
  and one obtains a contradiction as above.
  Therefore, the hyperplane $K=\la S\ra$ has equation $X_1+\lambda X_{1+s}=0$
  for some $\lambda\in\Fqt$.
  Since $Q_1,Q_2\in K\cap K^\hs\cap\ldots\cap K^{\hs^{t-2}}$, the equations
  \begin{equation}\label{e:iperpiani}
    X_i+\lambda^{\sigma^{i-1}} X_{i+s}=0,\quad i=1,2,\ldots,t-1
  \end{equation}
  are linearly dependent (indices are taken modulo $t$).
  Let $I=\{i_1,i_2,\ldots,i_u\}$ be the set of indices of a minimal linearly dependent set of
  equations in (\ref{e:iperpiani}).
  The multiset
  \[
    \bigcup_{k=1}^u\{i_k,i_k+s\}
  \]
  contains each integer mod $t$ either zero or two times, hence $i$ belongs to $I$ if and
  only if $i+s\in I$.
  From $u<t$ it follows $s>1$, and $s$ is a nontrivial divisor of $t$.
\end{proof}

\begin{remark}
  The assumption that $t$ is a prime cannot be removed from the previous proposition.
  Indeed, assume $P$ is an imaginary point in $PG(3,q^4)$.
  The line $\ell=\la P,P^{\hs^2}\ra$ is fixed by $\hs^2$, so its intersection
  with $\PG(3,q^2)$ is a line in the latter projective space.
  It is well known that there are precisely $q^2+1$ $\Fq$-rational lines, forming a spread of $\PG(3,q)$,
  which intersect $\ell$.
\end{remark}
  
Let $q\ge t-1$.
A \emph{normal rational curve of order $t-1$} in $\PG(t-1,q^t)$ is any $\cC_t^\tau$, where
\begin{equation}\label{e:cC_i}
  \cC_i=\{\la(1,y,\ldots,y^{t-1})\ra_{q^t}\mid y\in\mathbb F_{q^i}\}\cup\{\la(0,0,\ldots,0,1)\ra_{q^t}\}\quad
  (i\mid t),
\end{equation}
and $\tau\in\PGL(t,q^t)$.
When $\tau\in\PGL(t,q)$, $\cC_t^\tau$ is a \emph{normal $\Fq$-rational curve}.
\begin{theorem}\label{t:HT}\emph{\cite[Theorem 27.5.1 (v)]{JWPH3}}
  Let $q\ge t+1$.
  Then there is a unique normal rational curve of order $t-1$ in $\PG(t-1,q)$ through any $t+2$ points no
  $t$ of which lie in a hyperplane.
\end{theorem}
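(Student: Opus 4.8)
The plan is to normalize the configuration by a projectivity and then exhibit the curve as a tuple of partial fractions, reading off both existence and uniqueness from the arithmetic of the parameters.

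First I would invoke the sharp transitivity of $\PGL(t,q)$ on ordered frames of $\PG(t-1,q)$ (the fundamental theorem of projective geometry). Since no $t$ of the $t+2$ points lie on a hyperplane, the first $t+1$ of them form a frame and can be sent to the coordinate points $e_1=\la(1,0,\ldots,0)\ra_q,\ldots,e_t=\la(0,\ldots,0,1)\ra_q$ together with the unit point $u=\la(1,\ldots,1)\ra_q$, while the last point becomes $b=\la(b_1,\ldots,b_t)\ra_q$. The general position hypothesis then forces each $b_j\neq0$ and the $b_j$ pairwise distinct: indeed $b_j=0$ would place $b$ and the $t-1$ points $e_k$ ($k\neq j$) on the hyperplane $X_j=0$, and $b_i=b_j$ would place $b$, $u$ and the $t-2$ points $e_k$ ($k\neq i,j$) on the hyperplane $X_i=X_j$, each a set of $t$ points on a hyperplane.

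Next, for pairwise distinct $c_1,\ldots,c_t\in\Fq$ I would consider the curve $\gamma_{\vec c}$ given by $y\mapsto\la(1/(c_1-y),\ldots,1/(c_t-y))\ra_q$, completed by $c_j\mapsto e_j$ and $\infty\mapsto u$. Clearing denominators shows that each coordinate is a form of degree $t-1$ and that the $t$ forms are $\Fq$-independent, so $\gamma_{\vec c}$ is a normal rational curve of order $t-1$ through the frame. A point $b$ lies on $\gamma_{\vec c}$ exactly when $c_j=y_b+\kappa/b_j$ for a single pair $(y_b,\kappa)$ with $\kappa\neq0$; choosing any such pair produces a curve through all $t+2$ points. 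The only thing to verify is that $c_1,\ldots,c_t,y_b,\infty$ are genuinely distinct, which follows from the $b_j$ being distinct and nonzero. This is precisely where $q\ge t+1$ is used, since $c_1,\ldots,c_t,y_b$ are $t+1$ distinct elements of $\Fq$.

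For uniqueness I would show that any normal rational curve of order $t-1$ through $e_1,\ldots,e_t,u$ is a $\gamma_{\vec c}$ up to reparametrization: parametrizing it over $\PG(1,q)$ with $u$ at $\infty$ and $e_j$ at a finite value $c_j$, the requirement that the $k$-th coordinate form vanish at every $c_j$ with $j\neq k$ forces that form to be a scalar multiple of $\prod_{j\neq k}(y-c_j)$, which is exactly the partial-fraction shape. Two tuples $\vec c$ and $\vec c\,'$ define the same point set if and only if they differ by a common affine substitution $y\mapsto\alpha y+\beta$ (the substitution must fix $\infty$, where $u$ sits). Since every admissible tuple has the form $c_j=y_b+\kappa/b_j$, any two of them are related by $\alpha=\kappa'/\kappa$ and $\beta=y_b'-\alpha y_b$; hence they all yield the one curve, which gives uniqueness. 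I expect the delicate point to be this structural claim that the partial-fraction curves are the only normal rational curves through the frame, together with the bookkeeping that collapses the two-parameter family of tuples $\vec c$ into a single affine-equivalence class, from which existence and uniqueness both follow.
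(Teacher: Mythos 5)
The paper itself gives no proof of this statement: it is imported verbatim from Hirschfeld--Thas \cite{JWPH3}, so there is no internal argument to compare against, and a self-contained proof is genuinely new content here. Your proof is correct, and it is in substance the classical argument behind the cited result: normalize the first $t+1$ points to the standard frame by the fundamental theorem of projective geometry, note that the general-position hypothesis forces the coordinates $b_j$ of the last point to be nonzero and pairwise distinct, and show that every normal rational curve through the frame has the Lagrange/partial-fraction shape $y\mapsto\la(1/(c_1-y),\ldots,1/(c_t-y))\ra_q$. Your derivation of that shape is sound: after reparametrizing with $u$ at $\infty$, each coordinate form has degree at most $t-1$ and vanishes at the $t-1$ nodes $c_j$, $j\neq k$, hence is a scalar multiple of $\prod_{j\neq k}(y-c_j)$, and the condition that the point at $\infty$ be $u$ forces the scalars to coincide; the reduction of all admissible tuples $c_j=y_b+\kappa/b_j$ to a single affine-equivalence class then delivers existence and uniqueness simultaneously. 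Two minor remarks. First, your statement of where $q\ge t+1$ enters is slightly misplaced: the distinctness of $c_1,\ldots,c_t,y_b$ is automatic from the $b_j$ being distinct and nonzero; the bound is really what makes the hypothesis non-vacuous (already $t$ pairwise distinct nonzero $b_j$ require $q-1\ge t$) and what guarantees that the curve, a $(q+1)$-arc, can carry $t+2$ points. Second, the ``only if'' half of your equivalence between tuples and affine substitutions tacitly invokes the standard fact that reparametrizations of a normal rational curve form $\PGL(2,q)$; since your chain of reasoning only uses the elementary ``if'' direction (affinely related tuples give the same point set, by direct computation), nothing is at stake, but it would be cleaner to say so explicitly.
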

A normal $\Fq$-rational curve can be characterized by means of the number of its $\Fq$-rational points.
\begin{proposition}
  Let $\cC$ be a normal rational curve of order $t-1$ in $\PG(t-1,q^t)$.
  Assume $q\ge t+1$.
  Then $\cC$ is $\Fq$-rational if and only if $\cC$ has $q+1$ $\Fq$-rational points.
\end{proposition}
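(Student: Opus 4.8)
The plan is to prove both directions of the equivalence, with the hard direction being the converse. The forward direction is immediate: a normal $\Fq$-rational curve is by definition $\cC_t^\tau$ with $\tau\in\PGL(t,q)$, so its $\Fq$-rational points are exactly the images under $\tau$ of the $\Fq$-rational points of $\cC_t$. Since $\tau$ preserves $\PG(t-1,q)$ (it has coefficients in $\Fq$), the $\Fq$-rational points of $\cC_t^\tau$ correspond to those $y\in\Fqt$ with $\la(1,y,\ldots,y^{t-1})\ra_{q^t}$ being $\Fq$-rational, together with the point at infinity. A point $\la(1,y,\ldots,y^{t-1})\ra_{q^t}$ is $\Fq$-rational precisely when all ratios $y^i$ lie in $\Fq$ after scaling, which forces $y\in\Fq$; hence there are exactly $q$ affine such points plus the point $\la(0,\ldots,0,1)\ra_{q^t}$, giving $q+1$ in total.

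For the converse, suppose $\cC$ is a normal rational curve of order $t-1$ in $\PG(t-1,q^t)$ having $q+1$ distinct $\Fq$-rational points. The strategy is to use the uniqueness in Theorem \ref{t:HT} to pin down $\cC$ as an $\Fq$-rational curve. First I would select $t+2$ among these $q+1$ points (possible since $q+1\ge t+3$ as $q\ge t+1$); I must check that no $t$ of them lie in a hyperplane. Because these points all lie on the normal rational curve $\cC$, and any $t$ points on a normal rational curve of order $t-1$ are in general position (this is the defining arc property of such curves), no $t$ of them lie in a hyperplane. Thus the chosen $t+2$ points, all being $\Fq$-rational, determine by Theorem \ref{t:HT} a unique normal rational curve of order $t-1$ in $\PG(t-1,q)$, call it $\cC'$; and $\cC'$ is by construction a normal $\Fq$-rational curve when viewed as a curve of $\PG(t-1,q^t)$.

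The key step is then to identify $\cC$ with $\cC'$. Both $\cC$ and $\cC'$ are normal rational curves of order $t-1$ in $\PG(t-1,q^t)$ passing through the same $t+2$ points, and by the uniqueness clause of Theorem \ref{t:HT} applied over the extension field $\Fqt$ (which is legitimate since $q\ge t+1$ guarantees $q^t\ge t+1$), there is a unique such curve through $t+2$ points no $t$ of which lie on a hyperplane. Hence $\cC=\cC'$, so $\cC$ is a normal $\Fq$-rational curve, as required.

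The main obstacle I anticipate is ensuring that the $t+2$ chosen $\Fq$-rational points satisfy the general-position hypothesis of Theorem \ref{t:HT}, and more subtly, that uniqueness can be invoked coherently at the two field levels. The point is that Theorem \ref{t:HT} is a statement about $\PG(t-1,q)$, producing a curve $\cC'$ there, whereas the identification $\cC=\cC'$ must take place in $\PG(t-1,q^t)$. I would handle this by regarding $\cC'$ as embedded in $\PG(t-1,q^t)$ and applying the uniqueness of the normal rational curve through $t+2$ points over the larger field $\Fqt$; since both $\cC$ and the embedding of $\cC'$ are normal rational curves of order $t-1$ in $\PG(t-1,q^t)$ sharing $t+2$ points in general position, they must coincide. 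Care is needed to confirm that the general-position property of points on a normal rational curve transfers correctly across the field extension, but this follows from the arc property being intrinsic to the curve regardless of the ambient field.
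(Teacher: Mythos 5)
Your proof is correct and takes essentially the same route as the paper's: choose $t+2$ of the $\Fq$-rational points (in general position by the arc property of $\cC$), apply Theorem \ref{t:HT} over $\Fq$ to produce a normal $\Fq$-rational curve through them, then invoke the uniqueness part of Theorem \ref{t:HT} in $\PG(t-1,q^t)$ (legitimate since $q^t\ge t+1$) to conclude $\cC$ coincides with that curve; you merely make explicit the two-level use of Theorem \ref{t:HT} that the paper leaves terse. One trivial slip: $q\ge t+1$ gives $q+1\ge t+2$, not $q+1\ge t+3$, but $t+2$ points is all you need, so nothing breaks.
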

\begin{proof}
  For any $\tau\in\PGL(t,q)$, a point $P^\tau$ is $\Fq$-rational if and only if $P$ is $\Fq$-rational.
  This implies that the number of $\Fq$-rational points of a normal $\Fq$-rational curve is exactly $q+1$.
  
  Conversely, assume that $\cC$ has at least $q+1$ $\Fq$-rational points.
  Fix a $(t+2)$-subset, say $\cS$, of $\cC\cap\PG(t-1,q)$.
  By Theorem \ref{t:HT} a normal rational curve $\cC_1^\tau$, $\tau\in\PGL(t,q)$ exists containing $\cS$.
  Then $\cC$ and $\cC_t^\tau$ share at least $t+2$ points.
  By Theorem \ref{t:HT} again, $\cC=\cC_t^\tau$.
\end{proof}

\begin{proposition}\label{t:NRC-reali}
  Assume $t$ is prime, and $q\ge t+1$.
  Let $Q_1$ and $Q_2$ be two distinct $\Fq$-rational points in $\PG(t-1,q^t)$.
  If $P$ is an imaginary point, then a unique normal rational curve
  $\cC\subset\PG(t-1,q^t)$ exists which contains $Q_1$, $Q_2$, $P$, $P^\hs$, $\ldots$,
  $P^{\hs^{t-1}}$, and such $\cC$ is $\Fq$-rational.
\end{proposition}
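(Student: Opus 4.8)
The plan is to produce $\cC$ as the unique normal rational curve through the $t+2$ points $Q_1,Q_2,P,P^\hs,\ldots,P^{\hs^{t-1}}$, and then to show it carries $q+1$ $\Fq$-rational points, so that $\Fq$-rationality follows from the preceding proposition. First I would check that these $t+2$ points are in the required position: since $t$ is prime, $P$ is imaginary, and $Q_1\neq Q_2$ are $\Fq$-rational, Proposition \ref{p:t+2pts} guarantees that no $t$ of them lie in a hyperplane. Applying Theorem \ref{t:HT} inside $\PG(t-1,q^t)$ (whose order $q^t$ certainly exceeds $t+1$) yields a unique normal rational curve $\cC$ of order $t-1$ through all $t+2$ points. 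This settles existence and uniqueness; it remains to prove that $\cC$ is $\Fq$-rational.

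The key structural observation is that $\cC$ is invariant under $\hs$. Indeed, $\hs$ is induced by a generator $\sigma$ of $\Gal(\Fqt/\Fq)$ acting coordinatewise, so it fixes every $\Fq$-rational point; in particular $Q_1^\hs=Q_1$ and $Q_2^\hs=Q_2$. Moreover $\hs$ permutes $P,P^\hs,\ldots,P^{\hs^{t-1}}$ cyclically, since $\hs$ has order $t$ and $P^{\hs^t}=P$. Because a collineation in $\PGaL(t,q^t)$ sends a normal rational curve of order $t-1$ to another such curve (the standard $\cC_t$ is fixed by the coordinatewise $\sigma$, and conjugation carries $\PGL(t,q^t)$ to itself), the image $\cC^\hs$ is again a normal rational curve of order $t-1$, and by the above it passes through the same $t+2$ points. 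Uniqueness in Theorem \ref{t:HT} then forces $\cC^\hs=\cC$.

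Next I would count the $\Fq$-rational points of $\cC$. A point of $\PG(t-1,q^t)$ is $\Fq$-rational precisely when it is fixed by $\hs$: normalizing a nonzero coordinate to $1$ shows that $\hs$-invariance forces all coordinates into the fixed field $\Fq$. Thus the $\Fq$-rational points of $\cC$ are exactly the fixed points of $\hs|_\cC$. Transporting the stabilizer of $\cC$ to the parameter line via the standard isomorphism $\cC\cong\PG(1,q^t)$, which preserves the companion field automorphism, $\hs$ becomes an element $g\in\PGaL(2,q^t)$ with companion automorphism $\sigma$ of order $t$. Using the two fixed points arising from $Q_1,Q_2$ to diagonalize, I may assume $g\colon\la(a,b)\ra_{q^t}\mapsto\la(a^\sigma,c\,b^\sigma)\ra_{q^t}$ for some $c\in\Fqt^*$. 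Since the image of $P$ has orbit length exactly $t$, the order of $g$ is $t$, which forces $N(c)=1$; Hilbert's Theorem 90 then writes $c=d^{\sigma}d^{-1}$, and the fixed-point equation $b^{q^\nu-1}=c^{-1}$ (with $\sigma\colon x\mapsto x^{q^\nu}$, $\gcd(\nu,t)=1$) acquires exactly $\gcd(q^\nu-1,q^t-1)=q-1$ nonzero solutions $b$. Together with the two base points $Q_1,Q_2$ this gives precisely $q+1$ fixed points, hence $q+1$ $\Fq$-rational points on $\cC$, and the preceding proposition yields that $\cC$ is $\Fq$-rational.

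I expect the main obstacle to be the fixed-point count in the last paragraph: one must justify carefully that $\hs|_\cC$ has companion automorphism $\sigma$ (so that the $\gcd$ computation exploits $\gcd(\nu,t)=1$), and that the orbit condition on $P$ pins the order of $g$ down to exactly $t$, which is what activates Hilbert's Theorem 90 and produces the full family of $q-1$ interior solutions rather than only the two obvious fixed points.
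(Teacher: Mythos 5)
Your proposal is correct, but it follows a genuinely different route from the paper. The paper handles existence and uniqueness exactly as you do (Proposition \ref{p:t+2pts} plus Theorem \ref{t:HT}), but proves $\Fq$-rationality by a global double count: it computes the number $K_1$ of imaginary points, the number $K_2=q^t-q$ of imaginary points on a normal $\Fq$-rational curve (here is where $t$ prime enters again), and the number $K_3$ of normal $\Fq$-rational curves through two fixed $\Fq$-rational points, obtained from the total count $\nu_{t-1}$ of \cite[Theorem 27.5.3 (ii)]{JWPH3}; since each imaginary point $X$ lies on at most one normal $\Fq$-rational curve through $Q_1,Q_2$ (uniqueness again), the inequality $K_1\ge K_2K_3$ holds with equality if and only if the conclusion holds for every imaginary point, and a direct computation verifies $K_1=K_2K_3$. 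Your argument is instead local and structural: you derive $\cC^\hs=\cC$ from uniqueness, transport $\hs|_\cC$ to an element $g\in\PGaL(2,q^t)$ with companion automorphism $\sigma$, and count its fixed points -- two from the diagonalizing basis and, via $N(c)=1$, Hilbert 90 and $\gcd(q^\nu-1,q^t-1)=q-1$, exactly $q-1$ more -- so that the characterization ``$\cC$ is $\Fq$-rational iff it has $q+1$ $\Fq$-rational points'' applies. (The paper proves that characterization but, curiously, does not invoke it in its own proof.) What each approach buys: the paper's count is elementary, needing no semilinear fixed-point analysis, but it is nonconstructive and hinges on a numerical coincidence checked by brute computation; yours exhibits the $q+1$ rational points explicitly, isolates the only use of $t$ prime in the general-position step, and all your intermediate claims check out -- $g^t=\mathrm{id}$ indeed forces $N(c)=1$ since $g^t$ acts as $\la(a,b)\ra_{q^t}\mapsto\la(a,N(c)b)\ra_{q^t}$, and the $q-1$ solutions of $b^{q^\nu-1}=c^{-1}$ form a coset $b_0\Fq^*$, giving distinct projective points. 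The one step you rightly flag as needing care, that the stabilizer of $\cC$ transports to $\PGaL(2,q^t)$ with the same companion automorphism, is not proved in the paper and would require a citation (e.g.\ \cite[Theorem 27.5.3 (i)]{JWPH3} for the projectivity stabilizer, combined with the observation that the standard curve in (\ref{e:cC_i}) is fixed by the coordinatewise action of $\sigma$, so that $\cC^\hs=\cC$ yields $g=\hs_2\rho$ with $\rho\in\PGL(2,q^t)$); with that reference supplied, your proof is complete.
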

\begin{proof}
  Proposition \ref{p:t+2pts} and $q^t\ge t+1$ are the assumptions of Theorem \ref{t:HT} which states the
  existence and uniqueness of $\cC$.
  
  The number of imaginary points in $\PG(t-1,q^t)$ is
  \[
    K_1=q^{t-1}\prod_{i=0}^{t-2}(q^{t-1}-q^i).
  \]
  Since $t$ is prime, the number of imaginary points of a normal $\Fq$-rational curve is $K_2=q^t-q$.
  Let $K_3$ be the constant number of normal $\Fq$-rational curves containing two distinct $\Fq$-rational points.
  By double counting the number of triples $(R_1,R_2,\cS)$, where $R_1$ and $R_2$ are $\Fq$-rational points,
  $R_1\neq R_2$, and $\cS$ is a normal $\Fq$-rational curve containing both $R_1$ and $R_2$,
  taking into account that the total number of normal $\Fq$-rational curves in $\PG(t-1,q^t)$ is
  \cite[Theorem 27.5.3 (ii)]{JWPH3}
  \[
    \nu_{t-1}=\frac{\prod_{i=0}^{t-1}(q^t-q^i)}{q(q^2-1)(q-1)},
  \]
  one obtains
  \[
    \frac{q(q^t-1)(q^{t-1}-1)}{(q-1)^2}K_3=(q+1)q\nu_{t-1},
  \]
  whence
  \[
    K_3=\frac{\prod_{i=0}^{t-1}(q^t-q^i)}{q(q^t-1)(q^{t-1}-1)}.
  \]
  Now let $M$ be the number of pairs $(X,\cT)$, where $X$ is an imaginary point, and $\cT$ is a normal
  $\Fq$-rational curve containing $X$ and the given points $Q_1$, $Q_2$.
  There is at most one such curve, for it also contains $X^\hs$, $X^{\hs^2}$, $\ldots$, $X^{\hs^{t-1}}$.
  Then $K_1\ge M=K_2K_3$, and the equality holds if and only if for any imaginary point $X$
  the unique normal rational curve containing $Q_1$, $Q_2$, $X$, $X^{\hs}$, $\ldots$, $X^{\hs^{t-1}}$
  is $\Fq$-rational.
  A direct computation shows that indeed $K_1=K_2K_3$.  
\end{proof}

\section{\texorpdfstring{$q$-order sublines}{q-order sublines}}
\label{s:sublines}

\begin{proposition}
\label{p:kollin}
Let $\Sigma$ and $\Gamma$ be a pair of a canonical subgeometry and a $(t-3)$-space of $\PG(t-1,q^t)$, respectively, 
such that $p_{\Gamma,\ell_0}(\Sigma)$ is a scattered linear set of pseudoregulus type for some line $\ell_0$, $\ell_0\cap \Gamma = \emptyset$. 
Then, up to projectivities, 
\begin{enumerate}[(i)]
	\item $\Sigma=\{\la(\lambda,\lambda^\sigma,\ldots,\lambda^{\sigma^{t-1}})\ra_{q^t} 
	\mid \lambda \in \F_{q^t}^*\}$,
	\item $\Gamma$ is the hyperplane with equations $X_{t-1}=X_t=0$,
	\item the projection of $\Sigma$ from $\Gamma$ onto the line $\ell_1$ with equations 
	$X_1=X_2=\ldots=X_{t-2}=0$ is 
	\[\{\la (0,0,\ldots,0,\mu,\mu^{\sigma})\ra_{q^t} \mid \mu \in \F_{q^t}^*\},\]
\end{enumerate}
where $\sigma$ is a field automorphism $\sigma\colon x\mapsto x^{q^\nu}$ with $\gcd(\nu,t)=1$.
\end{proposition}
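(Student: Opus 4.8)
The plan is to deduce everything from the construction already performed in the proof of Theorem~\ref{t:main}, reading off the three normalizations from a single change of coordinates. Since $p_{\Gamma,\ell_0}(\Sigma)$ is a scattered linear set of pseudoregulus type, hypothesis (i) of Theorem~\ref{t:main} is satisfied (its remaining assumptions $\Sigma\cap\Gamma=\emptyset=\ell_0\cap\Gamma$ hold by the standing conventions of this section), so its equivalent condition (iii) holds as well: there are a generator $\hs$ of the subgroup of $\PGaL(t,q^t)$ fixing $\Sigma$ pointwise and an imaginary point $P_\Gamma=\la(b_1,\ldots,b_t)\ra_{q^t}$ with $\la P_\Gamma,P_\Gamma^\hs,\ldots,P_\Gamma^{\hs^{t-1}}\ra=\PG(t-1,q^t)$ and $\Gamma=\la P_\Gamma,P_\Gamma^\hs,\ldots,P_\Gamma^{\hs^{t-3}}\ra$ as in \eqref{e:Gamma}. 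Moreover, the analysis in the implication (i)$\Rightarrow$(ii) shows that, after fixing coordinates in which $\Sigma$ is the standard subgeometry, $\hs$ is induced by an automorphism $\sigma\colon x\mapsto x^{q^\nu}$ with $\gcd(\nu,t)=1$, as in \eqref{e:nu}; in particular $\sigma$ generates $\Gal(\Fqt/\Fq)$, its fixed field is $\Fq$, and $P_\Gamma^{\hs^j}=\la(b_1^{\sigma^j},\ldots,b_t^{\sigma^j})\ra_{q^t}$.

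First I would introduce the projectivity $\kappa\in\PGL(t,q^t)$ attached to the matrix $A$ of \eqref{e:kappa}, so that $A^{-1}$ is the matrix whose columns are the coordinate vectors of $P_\Gamma,P_\Gamma^\hs,\ldots,P_\Gamma^{\hs^{t-1}}$, i.e.\ with $(i,j)$-entry $b_i^{\sigma^{j-1}}$. By construction $\kappa$ sends $P_\Gamma^{\hs^{j}}$ to the base point $\la e_{j+1}\ra_{q^t}$ for $j=0,1,\ldots,t-1$. Consequently $\Gamma^\kappa=\la e_1,\ldots,e_{t-2}\ra_{q^t}$, that is the $(t-3)$-space of equations $X_{t-1}=X_t=0$, which is statement (ii); likewise $\ell_1=\la e_{t-1},e_t\ra_{q^t}$ is the image of $\la P_\Gamma^{\hs^{t-2}},P_\Gamma^{\hs^{t-1}}\ra$, and $\Gamma^\kappa\cap\ell_1=\emptyset$ since the two coordinate subspaces are complementary.

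For statement (i) I would verify that $\kappa$ carries $\Sigma$ onto $\{\la(\lambda,\lambda^\sigma,\ldots,\lambda^{\sigma^{t-1}})\ra_{q^t}\mid\lambda\in\Fqt^*\}$ by the cardinality argument already invoked in Theorem~\ref{t:main}. Writing $N_\lambda=\la(\lambda,\lambda^\sigma,\ldots,\lambda^{\sigma^{t-1}})\ra_{q^t}$, the $i$-th coordinate of $A^{-1}\cdot(\lambda,\lambda^\sigma,\ldots,\lambda^{\sigma^{t-1}})^{\top}$ equals $\sum_{j=0}^{t-1}(b_i\lambda)^{\sigma^j}=\tr_{\Fqt/\Fq}(b_i\lambda)\in\Fq$, because $\sigma$ runs through all of $\Gal(\Fqt/\Fq)$. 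Hence $N_\lambda^{\kappa^{-1}}\in\Sigma$ for every $\lambda\in\Fqt^*$, and the vector is nonzero because $b_1,\ldots,b_t$ form an $\Fq$-basis (Proposition~\ref{p:immaginario}) on which the trace form is nondegenerate. Since $\{N_\lambda\}$ and $\Sigma$ both have exactly $\theta_{t-1}$ points (two values of $\lambda$ yield the same $N_\lambda$ precisely when their ratio lies in the fixed field $\Fq$ of $\sigma$), the injection $\kappa^{-1}$ restricts to a bijection between them, giving $\Sigma^\kappa=\{N_\lambda\}$, which is statement (i).

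Finally, statement (iii) is a direct projection: projecting $N_\lambda$ from $\Gamma^\kappa=\{X_{t-1}=X_t=0\}$ onto $\ell_1=\{X_1=\cdots=X_{t-2}=0\}$ retains only the last two coordinates, yielding $\la(0,\ldots,0,\lambda^{\sigma^{t-2}},\lambda^{\sigma^{t-1}})\ra_{q^t}$ (never the zero vector, as $\Sigma\cap\Gamma=\emptyset$); setting $\mu=\lambda^{\sigma^{t-2}}$ gives $\mu^\sigma=\lambda^{\sigma^{t-1}}$, and $\mu$ sweeps $\Fqt^*$ as $\lambda$ does. I do not expect a genuine obstacle here: the statement essentially repackages the construction underlying Theorem~\ref{t:main}, and the only computation requiring care is the trace identity that identifies $\Sigma^\kappa$ with the subgeometry $\{N_\lambda\}$.
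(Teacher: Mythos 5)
Your proof is correct and takes essentially the same route as the paper's: the paper likewise first normalizes $\Sigma$ to the standard subgeometry by a projectivity $\alpha$, invokes Theorem \ref{t:main} (with Remark \ref{r:uniqueness}) to obtain the generator $\hs$ and the unique imaginary point $P_\Gamma$, and concludes by applying the projectivity $\kappa$ of \eqref{e:kappa}, so that $\alpha\kappa$ is the required projectivity. Your explicit trace identity $\tr_{\F_{q^t}/\F_q}(b_i\lambda)\in\F_q$, justifying that $\kappa^{-1}$ carries $\{\la(\lambda,\lambda^\sigma,\ldots,\lambda^{\sigma^{t-1}})\ra_{q^t}\mid\lambda\in\F_{q^t}^*\}$ into $\Sigma$, simply makes explicit a step the paper leaves inside the cardinality argument in the proof of Theorem \ref{t:main}.
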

\begin{proof}
Note that (iii) is a simple consequence of the first two conditions. 

A projectivity $\alpha$ exists such that $\Sigma^\alpha$ is as in Section \ref{s:center}, 
that is, $\Sigma^\alpha$ consists of the $\F_q$-rational points of $\PG(t-1,q^t)$. 
Let $\sigma \colon x \mapsto x^{q^\nu}$, $\gcd(\nu,t)=1$, be a field automorphism such that 
\[\hs \colon \la(x_1,x_2,\ldots,x_t)\ra_{q^t} \mapsto  \la(x_1^{\sigma},x_2^{\sigma},\ldots,x_t^{\sigma})\ra_{q^t}\]
is one of the two collineations (cf. Remark \ref{r:uniqueness}) satisfying the conditions of Theorem \ref{t:main} (i),\,(ii),\,(iii). 
Then there is a unique point $P_{\Gamma}$ such that $\Gamma=\la P_{\Gamma}, P_{\Gamma}^{\hs},\ldots,P_{\Gamma}^{\hs^{t-3}}\ra$ and 
$P_{\Gamma}$ is an imaginary point of $\PG(t-1,q^t)$ (cf. Theorem \ref{t:main} (b)). 
Consider the projectivity $\kappa$, related to $P_{\Gamma}$ and defined in \eqref{e:kappa}. 
Then $\alpha\kappa$ will be good for the purpose of the proof. 
\end{proof}

It follows from \cite{CsZa2015} that for $t=5$ or $t>6$ Proposition \ref{p:kollin} does not hold anymore if one fixes $\nu$.
For there is no collineation fixing $\Gamma$ and mapping 
$\{\la(\lambda,\lambda^{q},\ldots,\lambda^{q^{t-1}})\ra_{q^t}\mid\lambda \in \F_{q^t}^*\}$ 
 into 
$\{\la(\lambda,\lambda^{q^\zeta},\ldots,\lambda^{q^{\zeta(t-1)}})\ra_{q^t}\mid\lambda \in \F_{q^t}^*\}$
if $\zeta\neq 1,t-1$.

For the rest of this section, we may assume that 
coordinates are fixed such that conditions (i) and (ii) are satisfied in Proposition \ref{p:kollin}. 

\begin{definition}
Let $\iota \colon \PG_q(\F_{q^t}) \rightarrow \Sigma$ be the projectivity
\begin{equation}
\label{e:iota}
\la \lambda \ra_q \mapsto \la (\lambda^{\sigma^2},\lambda^{\sigma^3},\ldots, \lambda^{\sigma^{t-1}},\lambda,\lambda^\sigma) \ra_{q^t}.
\end{equation}
\end{definition}

Since the line $\ell_0$ such that $\ell_0\cap \Gamma=\emptyset$ is immaterial, we may assume that 
$\ell_0$  is the line $\ell_1$ in Proposition \ref{p:kollin}.
Take $x_{t-1}$ and $x_t$ as homogeneous coordinates in $\ell_0=\ell_1$, so
\begin{equation}\label{e:L-2c}
  \Lb=\{\la(\mu,\mu^\sigma)\ra_{q^t}\mid\mu\in\Fqt^*\}.
\end{equation}
The field reduction $\cF=\cF_{2,t,q}$ maps any point $X=\la(a,b)\ra_{q^t}$ of $\ell_0$ to the 
$(t-1)$-subspace $\cF(X)=\{\la(za,zb)\ra_q\mid z\in \F_{q^t}^*\}$ of $\PG(2t-1,q)\cong\PG_q(\Fqt^2)$.

\subsection{\texorpdfstring{Number of $q$-order sublines}{Number of q-order sublines}}

In \cite{LaShZa2013} the following hypersurface of degree $t$ in $\PG(2t-1,q)$ is dealt with:
\begin{equation}\label{e:cq}
  \cQ_{t-1,q}=\{ \la(a,b)\ra_{q}\mid(a,b)\in (\Fqt^2)^*,\ N(a)=N(b)\}.
\end{equation}
The $(t-1)$-subspaces of type
\begin{equation}\label{e:def-Sh}
  S_{h,k}=\{\la(z,kz^{q^h})\ra_q\mid z\in \Fqt^*\}\ \mbox{for $k\in \Fqt$, $N(k)=1$, $h=0,1,\ldots,t-1$,}
\end{equation}
are contained in $\cQ_{t-1,q}$, and any family
\[
  \cS_h=\{S_{h,k}\mid k\in \Fqt,\ N(k)=1\},\quad h=0,1,\ldots,t-1,
\]
is a partition of $\cQ_{t-1,q}$ \cite{LaShZa2013}.
If $q\ge t$, any subspace of $\PG(2t-1,q)$ which is contained in $\cQ_{t-1,q}$ 
is contained in some $S_{h,k}$  \cite[Corollary 10]{LaShZa2013}.

\begin{proposition}\label{p:f-di-l}
  It holds $\cF(\Lb)=\cS_0$.
\end{proposition}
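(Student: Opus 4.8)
The plan is to compute the field-reduction image of a general point of $\Lb$ directly and to match it against the definition of the family $\cS_0$. By \eqref{e:L-2c} a point of $\Lb$ has the form $X=\la(\mu,\mu^\sigma)\ra_{q^t}$ with $\mu\in\Fqt^*$, and by definition $\cF(X)=\{\la(z\mu,z\mu^\sigma)\ra_q\mid z\in\Fqt^*\}$. First I would reparametrize this $(t-1)$-subspace by setting $w=z\mu$; since $\mu\neq0$ this is a bijection of $\Fqt^*$, and the image becomes $\{\la(w,kw)\ra_q\mid w\in\Fqt^*\}$ with $k=\mu^\sigma\mu^{-1}=\mu^{q^\nu-1}$. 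Comparing with \eqref{e:def-Sh} for $h=0$, this is exactly $S_{0,k}$.

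Next I would check that $S_{0,k}$ genuinely belongs to $\cS_0$, i.e. that $N(k)=1$. This is immediate from multiplicativity and the $\sigma$-invariance of the norm: $N(k)=N(\mu^\sigma)N(\mu)^{-1}=N(\mu)N(\mu)^{-1}=1$. Together with the previous step this yields the inclusion $\cF(\Lb)\subseteq\cS_0$, since every member $\cF(X)$ of $\cF(\Lb)$ is of the form $S_{0,k}$ with $N(k)=1$.

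For the reverse inclusion I must show that every $k\in\Fqt$ with $N(k)=1$ arises as $\mu^{q^\nu-1}$ for some $\mu\in\Fqt^*$; this is the only real point of the argument. Because $\gcd(\nu,t)=1$, the automorphism $\sigma\colon x\mapsto x^{q^\nu}$ generates $\Gal(\Fqt/\Fq)$, so Hilbert's Theorem~90 for the cyclic extension $\Fqt/\Fq$ gives exactly $\{z\in\Fqt\mid N(z)=1\}=\{\mu^\sigma\mu^{-1}\mid\mu\in\Fqt^*\}$, which is the required surjectivity. Alternatively one can argue by counting: the kernel of $\mu\mapsto\mu^{q^\nu-1}$ is the fixed field $\Fq^*$ of $\sigma$, so the image has $(q^t-1)/(q-1)=\theta_{t-1}$ elements, and this equals the order of the norm-one subgroup, forcing surjectivity. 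Hence each $S_{0,k}\in\cS_0$ equals $\cF(\la(\mu,\mu^\sigma)\ra_{q^t})$ for a suitable $\mu$, giving $\cS_0\subseteq\cF(\Lb)$ and therefore $\cF(\Lb)=\cS_0$. As a sanity check, distinct $k$ produce distinct subspaces $S_{0,k}$ (evaluate at $w=1$), so $\cS_0$ has $\theta_{t-1}$ members, matching the size of the scattered set $\Lb$; this is consistent with the map $X\mapsto k$ being a bijection, although only set equality of the two families is needed.
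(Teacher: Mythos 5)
Your proof is correct and takes essentially the same approach as the paper: the identical computation $\cF(\la(\mu,\mu^\sigma)\ra_{q^t})=S_{0,k}$ with $k=\mu^{q^\nu-1}$, $N(k)=1$, gives $\cF(\Lb)\subseteq\cS_0$, and then equality. The only difference is cosmetic: where the paper closes by noting that both $\cF(\Lb)$ and $\cS_0$ have size $\theta_{t-1}$, you prove the reverse inclusion explicitly via Hilbert~90, and your ``alternative'' kernel-counting argument is in substance exactly the paper's cardinality step.
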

\begin{proof}
  Any $X\in\Lb$ is of type $\la(1,\mu^{q-1})\ra_{q^t}$, $\mu\in \Fqt^*$, that is 
  $X=\la(1,k)\ra_{q^t}$ for some $k\in \Fqt$, $N(k)=1$.
  Hence $\cF(X)=\{\la(z,kz)\ra_q\mid z\in \Fqt^*\}=S_{0,k}$.
  The statement follows from the fact that both $\cF(\Lb)$ and $\cS_0$ are of size $\theta_{t-1}$.
\end{proof}

As a consequence of Proposition\ \ref{p:f-di-l}, it holds
\begin{proposition}
  Let $q\ge t$.
  For any $q$-order subline $r$ in $\Lb$, there is a line $m$ of $\PG(2t-1,q)$ contained in
  some $S_{h,k}$, $h=1,2,\ldots,t-1$, $N(k)=1$, such that $m$ is a transversal line to the regulus $\cF(r)$.
\end{proposition}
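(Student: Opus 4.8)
The plan is to exhibit an explicit transversal line of the regulus $\cF(r)$ and then to locate it inside the subspaces $S_{h,k}$ by combining Proposition \ref{p:f-di-l} with \cite[Corollary 10]{LaShZa2013}. Since $r$ is a $q$-order subline, I would write $r=\{\la u_1+su_2\ra_{q^t}\mid s\in\Fq\cup\{\infty\}\}$ for suitable $\Fqt$-linearly independent $u_1,u_2\in\Fqt^2$ (with $s=\infty$ giving $\la u_2\ra_{q^t}$). For every $w\in\Fqt^*$ I set $m_w:=\la wu_1,wu_2\ra_q$. A direct check shows that $m_w$ meets each $\cF(X)$, $X\in r$, in exactly one point, namely $\la w(u_1+su_2)\ra_q$, so the $m_w$ are precisely the transversal lines of the regulus $\cF(r)$. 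Crucially, I would \emph{not} normalise the coordinates so as to send $r$ to a ``standard'' subline, because $\cQ_{t-1,q}$ and the $S_{h,k}$ are defined in the fixed coordinates of \eqref{e:L-2c}; the whole point is that $r$ lies inside $\Lb$.

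Next I would use $r\subseteq\Lb$. By Proposition \ref{p:f-di-l} one has $\cF(X)\in\cS_0$ for every $X\in r$, so each member of the regulus $\cF(r)$ is one of the subspaces $S_{0,k}\subseteq\cQ_{t-1,q}$. Consequently the $q+1$ intersection points $m_w\cap\cF(X)$, $X\in r$, all lie on $\cQ_{t-1,q}$. These points are pairwise distinct (distinct points of $r$ give distinct points of $m_w$), so they exhaust the $q+1$ points of the line $m_w$, and therefore $m_w\subseteq\cQ_{t-1,q}$ as a point set. This is the step where the hypothesis $r\subseteq\Lb$ does the real work: for a generic subline of $\PG(1,q^t)$ the spaces $\cF(X)$ are not contained in $\cQ_{t-1,q}$, and the transversals need not meet $\cQ_{t-1,q}$ at all.

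Since $q\ge t$, I can then invoke \cite[Corollary 10]{LaShZa2013}: the line $m_w$, being a subspace contained in $\cQ_{t-1,q}$, lies in some $S_{h,k}$ with $N(k)=1$ and $h\in\{0,1,\ldots,t-1\}$. It remains to exclude $h=0$. If $m_w\subseteq S_{0,k}$ for some such $k$, then $m_w$ would be contained in a single member of the partition $\cS_0$ of $\cQ_{t-1,q}$; but $m_w$ already meets the (at least two) pairwise distinct members $\cF(X)\in\cS_0$, $X\in r$, and distinct members of $\cS_0$ are disjoint. This contradiction forces $h\ge1$, and $m=m_w$ is the required transversal line.

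The main obstacle is conceptual rather than computational: one must resist normalising $r$ to a standard subline and instead keep the specific embedding $r\subseteq\Lb$, so that the regulus elements are exactly the norm-one subspaces $S_{0,k}$. This is what upgrades ``$m_w$ meets $\cQ_{t-1,q}$ in $q+1$ points'' to ``$m_w\subseteq\cQ_{t-1,q}$'' and makes \cite[Corollary 10]{LaShZa2013} applicable. The one routine verification to carry out carefully is that $m_w$ is genuinely a transversal, i.e.\ that it meets each regulus element in a single point and that these points account for all $q+1$ points of $m_w$.
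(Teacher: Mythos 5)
Your proof is correct and is exactly the argument the paper intends (the paper states this proposition as an unproved consequence of Proposition \ref{p:f-di-l} together with \cite[Corollary 10]{LaShZa2013}): the regulus elements lie in $\cS_0$, so every transversal line has all $q+1$ of its points on $\cQ_{t-1,q}$, hence lies in some $S_{h,k}$ by the $q\ge t$ hypothesis, and $h=0$ is excluded because the elements of $\cS_0$ are pairwise disjoint. Your explicit parametrization $m_w=\la wu_1,wu_2\ra_q$ of the transversals and the verification that they meet each $\cF(X)$ in exactly one point are the routine details the paper omits, carried out correctly.
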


\begin{proposition}\label{p:lemma-numero}
  Assume $q\ge t$.
  Let $P=\la(1,1)\ra_{q}$ and, for any $y\in\Fqt\setminus\Fq$ and $h\in\{0,1,\ldots,t-1\}$,
  define $Q_{y,h}=\la(y,y^{q^h})\ra_q$. Then
  \begin{enumerate}[(i)]
  \item The points $P$ and $Q_{y,h}$ in $\PG(2t-1,q)$ are distinct.
  \item For any point $Q$ in $\cQ_{t-1,q}\setminus\{P\}$, the line $\la P,Q\ra$ is contained in $\cQ_{t-1,q}$
  if and only if there are $y\in\Fqt\setminus\Fq$ and $h\in\{0,1,\ldots,t-1\}$ such that
  $Q=Q_{y,h}$.
  \item
  For any two pairs $\{y,h\}$ and $\{y',h'\}$, $y,y'\in \F_{q^t}\setminus \F_q$ and $h,h'\in \{0,1,\ldots,t-1\}$, 
  it holds 
	$Q_{y,h}=Q_{y',h'}$ if and only if $\la y \ra_q=\la y'\ra_q$ and $[\F_q(y) : \F_q]$ divides $h-h'$. 
  \end{enumerate}
\end{proposition}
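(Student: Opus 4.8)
The plan is to analyze the three parts of Proposition~\ref{p:lemma-numero} separately, working throughout in $\PG(2t-1,q)=\PG_q(\Fqt^2)$ and exploiting the norm condition $N(a)=N(b)$ that defines $\cQ_{t-1,q}$ together with the transitivity of the multiplicative structure of $\Fqt$. The key computational fact I would use repeatedly is that for $w\in\Fqt^*$ one has $N(w^{q^h})=N(w)$, since the Frobenius $x\mapsto x^{q^h}$ fixes the norm, so every candidate point $Q_{y,h}=\la(y,y^{q^h})\ra_q$ automatically satisfies $N(y)=N(y^{q^h})$ and hence lies on $\cQ_{t-1,q}$.

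\textbf{Part (i).} First I would observe that $P=\la(1,1)\ra_q$ equals $Q_{y,h}$ only if $(y,y^{q^h})$ is an $\Fq$-multiple of $(1,1)$, i.e.\ $y=y^{q^h}\in\Fq^*$; but $y\in\Fqt\setminus\Fq$ forces $y\notin\Fq$, a contradiction, so $P\neq Q_{y,h}$ for all admissible $y,h$. This is immediate.

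\textbf{Part (ii).} For the forward direction I would take an arbitrary $Q=\la(a,b)\ra_q\in\cQ_{t-1,q}\setminus\{P\}$ and suppose the whole line $\la P,Q\ra$ lies on $\cQ_{t-1,q}$. The generic point of this line is $\la(1+sa,\,1+sb)\ra_q$ for $s\in\Fq$ (after scaling $Q$ so its first coordinate is compatible; I would handle the coordinate bookkeeping by reducing to representatives where this is legitimate, noting $P$ and $Q$ are distinct). The condition that $N(1+sa)=N(1+sb)$ holds for \emph{all} $s\in\Fq$ is a polynomial identity in $s$ of degree $t$ over $\Fq$; since $q\ge t$ there are enough values of $s$ to force equality of the two norm polynomials coefficient-by-coefficient. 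Comparing the coefficient of $s$ gives $\tr(a)=\tr(b)$ and comparing the top coefficient $s^t$ gives $N(a)=N(b)$, which we already knew; the intermediate coefficients are the elementary-symmetric-in-conjugates expressions, and equality of all of them forces $a$ and $b$ to be Galois-conjugate, i.e.\ $b=a^{q^h}$ for some $h$. Writing $y=a$ (up to $\Fq$-scaling) then yields $Q=Q_{y,h}$ with $y\in\Fqt\setminus\Fq$ (if $y\in\Fq$ then $Q=P$). The converse direction is the easy computation already noted: if $Q=Q_{y,h}$ then every point of $\la P,Q\ra$ has coordinates $(1+sy,\,1+sy^{q^h})$ whose two entries are Frobenius-conjugate, hence have equal norm, so the line lies on $\cQ_{t-1,q}$.

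\textbf{Part (iii).} Here $Q_{y,h}=Q_{y',h'}$ means $(y,y^{q^h})$ and $(y',y'^{q^{h'}})$ are $\Fq$-proportional, say $y'=cy$ with $c\in\Fq^*$, which is exactly $\la y\ra_q=\la y'\ra_q$; substituting into the second coordinate gives $y'^{q^{h'}}=c\,y^{q^h}$, and since $c\in\Fq$ is fixed by Frobenius, $c\,y^{q^{h'}}=c\,y^{q^h}$, i.e.\ $y^{q^{h'}}=y^{q^{h}}$, equivalently $y^{q^{h'-h}}=y$. The standard fact that $y^{q^j}=y$ holds precisely when $j$ is a multiple of the degree $[\F_q(y):\F_q]$ then gives the stated divisibility condition $[\F_q(y):\F_q]\mid h-h'$, and the argument is reversible.

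\textbf{Main obstacle.} I expect the crux to be the forward direction of Part (ii): extracting from the single hypothesis ``the line lies on $\cQ_{t-1,q}$'' the strong conclusion that $a$ and $b$ are Galois-conjugate. The clean way is the polynomial-identity argument, and the hypothesis $q\ge t$ is precisely what guarantees that a degree-$t$ polynomial in $s$ vanishing on all of $\Fq$ must be the zero polynomial (or, more carefully, that matching values at $\ge t+1$ points forces coefficient equality). The delicate point is that equality of the two norm polynomials $N(1+sa)$ and $N(1+sb)$ as polynomials in $s$ means the characteristic polynomials of $a$ and $b$ over $\Fq$ coincide, and I must argue that equal minimal/characteristic polynomials over $\Fq$ forces conjugacy $b=a^{q^h}$; this is true but requires that $a,b$ generate the same field extension, which follows because the shared characteristic polynomial has the same irreducible factors. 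Handling the case where $a$ or $b$ lies in a proper subfield (so $Q$ could coincide with $P$ or the degree drops) is the routine-but-necessary edge case I would dispatch at the end.
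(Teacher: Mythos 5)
Your proposal is correct, but for the crux of the statement --- the ``only if'' half of (ii) --- you take a genuinely different route from the paper. The paper's proof is essentially a two-line appeal to \cite[Corollary 10]{LaShZa2013}: for $q\ge t$ every line contained in $\cQ_{t-1,q}$ lies in some $S_{h,k}$, and $P=\la(1,1)\ra_q\in S_{h,k}$ forces $k=1$, so any line of $\cQ_{t-1,q}$ through $P$ sits inside some $S_{h,1}=\{\la(z,z^{q^h})\ra_q\mid z\in\Fqt^*\}$, whose points other than $P$ are exactly the $Q_{y,h}$ (with $y\notin\Fq$, since $y\in\Fq$ gives back $P$). You instead prove this from scratch: parametrizing the points of $\la P,Q\ra$ as $\la(1+sa,1+sb)\ra_q$, the containment in $\cQ_{t-1,q}$ says $\prod_{i=0}^{t-1}(1+sa^{q^i})=\prod_{i=0}^{t-1}(1+sb^{q^i})$ for all $s\in\Fq$; a vanishing-polynomial argument upgrades this to a polynomial identity, and then unique factorization (equivalently, equality of characteristic polynomials over $\Fq$) makes the conjugate multisets $\{a^{q^i}\}$ and $\{b^{q^i}\}$ coincide, whence $b=a^{q^h}$. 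Your route buys self-containedness --- no dependence on the classification of subspaces of $\cQ_{t-1,q}$ --- at the cost of more computation; the paper's route is shorter and fits the rest of the section, which leans on the $S_{h,k}$ machinery throughout. Your ``if'' direction and parts (i) and (iii) coincide with the paper's arguments (for (iii), proportionality gives $\la y\ra_q=\la y'\ra_q$ and $y^{q^{h-h'}}=y$, hence the divisibility, and conversely).

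One precision point in your part (ii): you hedge between ``$q\ge t$ values of $s$ suffice'' and ``more carefully, $\ge t+1$ points force coefficient equality''. When $q=t$ you have only $q=t$ admissible values of $s$, and a degree-$t$ polynomial can vanish at $t$ points without being zero, so the hedged version does not close the argument. The fix is already contained in your own remarks: since $Q\in\cQ_{t-1,q}$ you know $N(a)=N(b)$ \emph{a priori}, so the difference of the two norm polynomials has degree at most $t-1$, and vanishing at the $q\ge t$ elements of $\Fq$ then forces it to be identically zero --- state it in that order. Note also that $a,b\neq0$ is automatic (if $a=0$ then $N(b)=0$ forces $b=0$, contradicting $(a,b)\neq(0,0)$), which legitimizes reading off the conjugacy from the roots $-a^{-q^i}$, $-b^{-q^i}$ of the two products.
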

\begin{proof}
  The `only if' part of assertion (ii) follows from the fact that for $q\ge t$, any line of $\cQ_{t-1,q}$ is contained in some $S_{h,k}$
  and $\la(1,1)\ra_q \in S_{h,k}$ if and only if $k=1$. The `if' part follows from the fact that 
	$\la\lambda(1,1) + (y,y^{q^h})\ra_q =\la(\lambda+y,(\lambda+y)^{q^h})\ra_q\in S_{h,1}$ for any $\lambda\in \F_q$. (In general, for any two 
	points of $S_{h,k}$ the line joining them is also contained in $S_{h,k}$). 
		
  As regards (iii), we may assume $h\geq h'$. We have $Q_{y,h}=Q_{y',h'}$ if and only if 
	$\lambda y = y'$ and $\lambda y^{q^h} = y'^{q^{h'}}$ for some $\lambda \in \F_q^*$. 
	If this happens, then clearly $\la y \ra_q = \la y' \ra_q$ and $y^{q^{h-h'}-1} = 1$, i.e $y\in \F_{q^{h-h'}}^*$ and hence $[\F_q(y) : \F_q] \, |\, h-h'$. On the other hand, if $[\F_q(y) : \F_q] \, |\, h-h'$ for some $y\in\F_{q^t}\setminus \F_q$, then $y\in \F_{q^{h-h'}}$ and hence 
	$y^{q^h}=y^{q^{h'}}$. It follows that for any $y' \in \la y \ra_q$, i.e. when $y'= \lambda y$ for some $\lambda \in \F_q^*$, we have $\lambda y^{q^h}=y'^{q^{h'}}$.
\end{proof}

\begin{proposition}
Denote by $N_1$ the number of lines contained in $\cQ_{t-1,q}$ and incident with $P$. If $q\ge t$, then
  \[
    N_1=\frac{\sum\limits_{y\in\Fqt\setminus\Fq}[\Fq(y):\Fq]}{q(q-1)}.
  \]
\end{proposition}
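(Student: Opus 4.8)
The plan is to count the lines through $P=\la(1,1)\ra_q$ contained in $\cQ_{t-1,q}$ by using Proposition \ref{p:lemma-numero} to enumerate the points $Q$ on such lines and then dividing out by the number of such points lying on each single line. First I would recall from Proposition \ref{p:lemma-numero}(ii) that a line $\la P,Q\ra$ with $Q\neq P$ lies in $\cQ_{t-1,q}$ if and only if $Q=Q_{y,h}$ for some $y\in\Fqt\setminus\Fq$ and $h\in\{0,1,\ldots,t-1\}$. Thus the set of points other than $P$ lying on lines of $\cQ_{t-1,q}$ through $P$ is exactly $\{Q_{y,h}\mid y\in\Fqt\setminus\Fq,\ h\in\{0,\ldots,t-1\}\}$, and I would count this set together with its distribution among the lines.

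The main device is a double-counting (or orbit-counting) argument on the family of pairs $(y,h)$. By Proposition \ref{p:lemma-numero}(iii), the naive count of pairs $(y,h)$ overcounts each point $Q_{y,h}$: two pairs $(y,h)$ and $(y',h')$ give the same point precisely when $\la y\ra_q=\la y'\ra_q$ and $[\Fq(y):\Fq]$ divides $h-h'$. I would therefore first fix a value of $y$ (up to the $\Fq$-scalar equivalence $\la y\ra_q$) and count the distinct points $Q_{y,h}$ arising as $h$ ranges over $\{0,1,\ldots,t-1\}$: since $Q_{y,h}=Q_{y,h'}$ iff $[\Fq(y):\Fq]\mid h-h'$, the number of distinct values of $h\bmod t$ giving distinct points is $t/[\Fq(y):\Fq]\cdot[\Fq(y):\Fq]=t$ divided appropriately; more precisely the residues $h\bmod[\Fq(y):\Fq]$ index the distinct points, yielding $[\Fq(y):\Fq]$ distinct points for each fixed $\la y\ra_q$. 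Summing $[\Fq(y):\Fq]$ over a set of representatives and accounting for the $(q-1)$ scalars in each $\la y\ra_q$ gives the total number of points $Q$; this is where the sum $\sum_{y\in\Fqt\setminus\Fq}[\Fq(y):\Fq]$ arises, with the factor $q-1$ collapsing the scalar equivalence.

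Next I would determine how many of these points lie on a single line of $\cQ_{t-1,q}$ through $P$: since every such line is contained in some $S_{h,1}$ (as $\la(1,1)\ra_q\in S_{h,k}$ forces $k=1$), and each $S_{h,1}$ is a $(t-1)$-subspace on which any line joining two of its points stays inside, a fixed line $\la P,Q\ra$ meets the point set $\{Q_{y,h}\}$ in exactly the $q$ points $Q_{\lambda,h}$ other than $P$ for the $q$ affine values along that line — that is, each such line carries exactly $q$ of the enumerated points besides $P$. Dividing the total point count by $q$ then yields the displayed formula $N_1=\dfrac{\sum_{y\in\Fqt\setminus\Fq}[\Fq(y):\Fq]}{q(q-1)}$, with the $q$ coming from the points-per-line count and the $q-1$ from the scalar equivalence collapsing $\la y\ra_q$.

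The step I expect to be the main obstacle is verifying cleanly that each line of $\cQ_{t-1,q}$ through $P$ contains exactly $q$ of the points $Q_{y,h}$, and that the double count is consistent — in particular confirming that the joint effect of the two equivalences in Proposition \ref{p:lemma-numero}(iii) (scaling by $\la y\ra_q$ and shifting $h$ by multiples of $[\Fq(y):\Fq]$) factors exactly as the denominator $q(q-1)$ without hidden overlaps between points coming from genuinely different $y$ with different degrees. Once the bookkeeping of the fibers of the map $(y,h)\mapsto Q_{y,h}$ is pinned down using part (iii), the formula follows by a direct division.
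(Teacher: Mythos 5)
Your argument is correct and takes essentially the same route as the paper: both rest on Proposition \ref{p:lemma-numero} and the double count giving $\sum_{y\in\Fqt\setminus\Fq}[\Fq(y):\Fq]=N_1\,q(q-1)$. The only cosmetic difference is the order of bookkeeping---you first collapse the $q-1$ scalar multiples of $y$ to count distinct points $Q_{y,h}$ and then divide by the $q$ points per line through $P$, whereas the paper counts the pairs $(y,Q_{y,h})$ in one step---and the overlap issue you flag is already settled by part (iii), since $Q_{y,h}=Q_{y',h'}$ forces $\la y\ra_q=\la y'\ra_q$.
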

\begin{proof}
\begin{sloppypar}
  According to Proposition \ref{p:lemma-numero} (iii), for any $y\in \F_{q^t}\setminus \F_q$ 
  and $h,h'\in \{0,1,\ldots,t-1\}$, we have $Q_{y,h}=Q_{y,h'}$ if and only if $h\equiv h' {\pmod {[\F_q(y) : \F_q]}}$. 
  Also, if $Q_{y,h}=Q_{y',h'}$, then $\la y \ra_q=\la y'\ra_q$. 
	Then the statement follows from a double counting 
  of the distinct pairs $(y,Q_{y,h})$, $y\in\Fqt\setminus\Fq$, $h\in\{0,1,\ldots,t-1\}$. 
  The number of such pairs is 
	$\sum\limits_{y\in\Fqt\setminus\Fq}[\Fq(y):\Fq]=N_1 q (q-1)$. 
\end{sloppypar}\end{proof}
\begin{proposition}
  If $q\ge t$, the number of lines of $\PG(2t-1,q)$ which are contained in $\cQ_{t-1,q}$ is
  \[
    N_2=\frac{\theta_{t-1}^2\sum\limits_{y\in\Fqt\setminus\Fq}[\Fq(y):\Fq]}{q(q^2-1)}.
  \]
\end{proposition}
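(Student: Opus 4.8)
The plan is to count, by double counting, the incident pairs $(R,\ell)$ in which $R$ is a point of $\cQ_{t-1,q}$ and $\ell$ is a line contained in $\cQ_{t-1,q}$ and passing through $R$. Counting by lines, every line $\ell\subseteq\cQ_{t-1,q}$ of $\PG(2t-1,q)$ carries exactly $q+1$ points, so the number of such pairs equals $N_2(q+1)$. Counting by points, I need the number of lines of $\cQ_{t-1,q}$ through a fixed point $R$; the decisive observation is that this number does not depend on $R$, and hence equals the quantity $N_1$ computed in the preceding proposition for the particular point $P=\la(1,1)\ra_q$.

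First I would record $|\cQ_{t-1,q}|$. Since each family $\cS_h$ partitions $\cQ_{t-1,q}$ into the $(t-1)$-subspaces $S_{h,k}$, there being exactly $\theta_{t-1}$ elements $k\in\Fqt$ with $N(k)=1$ (the kernel of the norm map) and each $S_{h,k}\cong\PG(t-1,q)$ having $\theta_{t-1}$ points, disjointness yields $|\cQ_{t-1,q}|=\theta_{t-1}^2$.

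Next comes the homogeneity, which I expect to be the main obstacle: I must produce a collineation group preserving $\cQ_{t-1,q}$ and transitive on its points, so that the value $N_1$, computed only at the special point $P$, is valid everywhere. Consider the group $H$ of collineations of $\PG(2t-1,q)=\PG_q(\Fqt^2)$ induced by the $\Fq$-linear maps $(x,y)\mapsto(ux,vy)$ with $u,v\in\Fqt^*$ and $N(u)=N(v)$. As $N(ux)=N(u)N(x)$, such a map sends $\la(a,b)\ra_q$ to $\la(ua,vb)\ra_q$ and preserves the condition $N(a)=N(b)$, so $H$ stabilizes $\cQ_{t-1,q}$ and therefore permutes the lines contained in it. I would then verify transitivity on points: every point of $\cQ_{t-1,q}$ has both coordinates nonzero (if $a=0$, then $N(a)=0=N(b)$ forces $b=0$, impossible); the map with $u=v=a^{-1}$ sends $\la(a,b)\ra_q$ to $\la(1,b/a)\ra_q$ with $N(b/a)=1$, and the map with $u=1$, $v=(b/a)^{-1}$ then carries this to $P$. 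Hence the number of lines of $\cQ_{t-1,q}$ through any point is constant, equal to $N_1$.

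Combining the two counts gives $N_2(q+1)=|\cQ_{t-1,q}|\,N_1=\theta_{t-1}^2 N_1$, and substituting the value of $N_1$ from the preceding proposition yields
\[
  N_2=\frac{\theta_{t-1}^2 N_1}{q+1}
     =\frac{\theta_{t-1}^2\sum\limits_{y\in\Fqt\setminus\Fq}[\Fq(y):\Fq]}{q(q-1)(q+1)}
     =\frac{\theta_{t-1}^2\sum\limits_{y\in\Fqt\setminus\Fq}[\Fq(y):\Fq]}{q(q^2-1)},
\]
as claimed. Apart from the homogeneity argument, every step is routine counting.
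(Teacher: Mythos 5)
Your proof is correct and is essentially the paper's own argument: a double count of the incident pairs $(R,\ell)$ with $\ell\subseteq\cQ_{t-1,q}$, using $|\cQ_{t-1,q}|=\theta_{t-1}^2$ and the constancy of the number $N_1$ of lines of $\cQ_{t-1,q}$ through each point, then dividing by the $q+1$ points per line. The only difference is that where the paper simply cites the transitivity of the collineation group of $\cQ_{t-1,q}$ (Proposition 19 of the Lavrauw--Sheekey--Zanella reference), you verify it directly via the maps $(x,y)\mapsto(ux,vy)$ with $N(u)=N(v)$ -- a correct and self-contained replacement for that citation.
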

\begin{proof}
  Since the collineation group of $\cQ_{t-1,q}$ is transitive, see \cite[Proposition 19]{LaShZa2013}, every point in $\cQ_{t-1,q}$ is on
  precisely $N_1$ lines, and the size of $\cQ_{t-1,q}$ is $\theta_{t-1}^2$.
\end{proof}
\begin{theorem}\label{t:numero-sublines}
  If $q\ge t$, the number of $q$-order sublines in $\Lb$ is
  \begin{equation}\label{e:numero-sublines}
    \frac{\theta_{t-1}}{q+1}\left(\frac{\sum\limits_{y\in\Fqt\setminus\Fq}[\Fq(y):\Fq]}{q(q-1)}
    -\theta_{t-2}\right).
  \end{equation}
\end{theorem}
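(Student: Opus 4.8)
The plan is to reduce the global count to a local one: first count the $q$-order sublines through a single fixed point of $\Lb$, and then pass to the total by a double count over the points of $\Lb$. Fix $X=\la(1,1)\ra_{q^t}\in\Lb$, so that $\cF(X)=S_{0,1}$ by Proposition \ref{p:f-di-l}, and fix the point $P=\la(1,1)\ra_q\in S_{0,1}$. I would set up a bijection between the $q$-order sublines $r$ of $\Lb$ with $X\in r$ and the lines $m$ of $\PG(2t-1,q)$ satisfying $P\in m$, $m\subseteq\cQ_{t-1,q}$ and $m\not\subseteq S_{0,1}$. Under field reduction a subline $r$ through $X$ corresponds to the regulus $\cF(r)$, a set of $q+1$ mutually disjoint $(t-1)$-subspaces contained in $\cS_0$ and containing $S_{0,1}$; its transversal lines form a second family, with exactly one transversal through each point of each element of $\cF(r)$. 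I would send $r$ to the unique transversal $m$ of $\cF(r)$ passing through $P$.

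To see that $m$ lies in the prescribed set, note that $m$ meets the $q+1$ elements of $\cF(r)\subseteq\cS_0\subseteq\cQ_{t-1,q}$ in $q+1$ distinct points, all lying on $\cQ_{t-1,q}$; since $\cQ_{t-1,q}$ is a hypersurface of degree $t$ and $q+1>t$ by the hypothesis $q\ge t$, a line with more than $t$ points on it must be contained in it, so $m\subseteq\cQ_{t-1,q}$. As $m$ meets $S_{0,1}$ only in $P$ but also meets other members of $\cS_0$, it is not contained in $S_{0,1}$. Conversely, given a line $m$ with $P\in m\subseteq\cQ_{t-1,q}$ and $m\not\subseteq S_{0,1}$, the pairwise disjointness of the members of $\cS_0$ forces $m$ to meet each of them in at most one point; since $\cS_0$ partitions $\cQ_{t-1,q}$, the $q+1$ points of $m$ lie in $q+1$ distinct members, one of which is $S_{0,1}$. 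I would then invoke the Desarguesian structure of the spread $\cF(\cP)$ to conclude that these $q+1$ members form a regulus, i.e.\ equal $\cF(r)$ for a unique $q$-order subline $r$; since $S_{0,1}\in\cF(r)$ we obtain $X\in r$, and $m$ is the transversal of $\cF(r)$ through $P$. The two assignments are mutually inverse, which establishes the bijection.

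With the bijection in hand, the number of sublines through $X$ equals the number of lines of $\cQ_{t-1,q}$ through $P$ not contained in $S_{0,1}$. By the definition of $N_1$ this equals $N_1$ minus the number of lines through $P$ lying inside the $(t-1)$-space $S_{0,1}\cong\PG(t-1,q)$, namely $\theta_{t-2}$, so there are exactly $N_1-\theta_{t-2}$ sublines through $X$. Because the collineation group of $\cQ_{t-1,q}$ is transitive on points (\cite[Proposition 19]{LaShZa2013}), the number of lines of $\cQ_{t-1,q}$ through any point is again $N_1$, and applying the same bijection at an arbitrary point $X'\in\Lb$ shows that each of the $\theta_{t-1}$ points of $\Lb$ lies on exactly $N_1-\theta_{t-2}$ sublines. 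Counting the incident pairs (point of $\Lb$, subline containing it) in two ways, and using that every $q$-order subline has $q+1$ points, gives $(q+1)\cdot(\#\,\text{sublines})=\theta_{t-1}(N_1-\theta_{t-2})$, hence the number of sublines is $\tfrac{\theta_{t-1}}{q+1}(N_1-\theta_{t-2})$; substituting the value of $N_1$ yields \eqref{e:numero-sublines}. I expect the delicate point to be the reverse direction of the bijection, namely proving that the $q+1$ members of $\cS_0$ met by a line $m\subseteq\cQ_{t-1,q}$ genuinely constitute a regulus (the image of an actual $q$-order subline) rather than merely $q+1$ pairwise skew subspaces sharing a common transversal; this is exactly where the Desarguesian nature of the spread $\cF(\cP)$, together with $q\ge t$, has to be used.
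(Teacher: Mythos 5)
Your proposal is correct, and it reaches \eqref{e:numero-sublines} by a different decomposition of the same underlying double count. The paper argues globally: it computes the total number $N_2$ of lines contained in $\cQ_{t-1,q}$ (from $N_1$ together with the point-transitivity of the collineation group of $\cQ_{t-1,q}$, \cite[Proposition 19]{LaShZa2013}), subtracts the $\theta_{t-1}^2\theta_{t-2}/(q+1)$ lines lying inside elements of $\cS_0$, and divides the result by $\theta_{t-1}$, the number of transversal lines to the regulus $\cF(r)$ associated with one subline $r$. You instead localize: you count the sublines through a single point $X\in\Lb$ via a bijection with the lines of $\cQ_{t-1,q}$ through a fixed $P\in\cF(X)$ not contained in $\cF(X)$, obtaining $N_1-\theta_{t-2}$, and then double count point--subline incidences over the $\theta_{t-1}$ points of $\Lb$, dividing by the $q+1$ points of a subline rather than by its $\theta_{t-1}$ transversals; the quantity $N_2$ is never needed. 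The two routes are algebraically equivalent --- since $N_2=\theta_{t-1}^2N_1/(q+1)$, the paper's expression collapses to your $\frac{\theta_{t-1}}{q+1}(N_1-\theta_{t-2})$ --- and they rest on the same geometric facts: a transversal of $\cF(r)$ lies on $\cQ_{t-1,q}$ because a line with $q+1>t$ points on a degree-$t$ hypersurface is contained in it, and conversely a line $m\subseteq\cQ_{t-1,q}$ not inside an element of the partition $\cS_0$ meets $q+1$ distinct spread elements and determines the subline $\cB(m)\subseteq\Lb$ (the ``delicate point'' you flag is precisely the standard fact that $\cB(m)$ is a $q$-order subline whenever the line $m$ is not contained in a single element of the Desarguesian spread, which holds here since $m$ meets two distinct elements). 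What your version buys is that it makes explicit the back-and-forth correspondence that the paper's two-line proof leaves implicit, in particular the uniqueness of the transversal through $P$ and the well-definedness of the inverse map. One small wording point: pairwise disjointness of the members of $\cS_0$ alone does not force $m$ to meet each member in at most one point; you should say that $m\cap S_{0,k}$ is a subspace of the line $m$, and that $m\subseteq S_{0,k}$ is excluded for $k=1$ by hypothesis and for $k\neq1$ because $P\in m\cap S_{0,1}$ and the members are disjoint --- but this is exactly the argument you sketch, so it is a matter of phrasing, not a gap.
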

\begin{proof}
  This is the number $N$ of lines in $\cQ_{t-1,q}$ which are not contained in any element of $\cS_0$,
  \[
    N=N_2-\frac{\theta_{t-1}^2\theta_{t-2}}{q+1},
  \]
  divided by the number of lines in $\PG(2t-1,q)$ related to a common $q$-order subline in
  $\PG(1,q^t)$, which is $\theta_{t-1}$.
\end{proof}

The following is a corollary of Theorem \ref{t:numero-sublines}:

\begin{proposition}
  Let $q\ge t$.  
  If $t$ is prime, then $\Lb$ contains precisely $(t-1)\theta_{t-1}\theta_{t-2}/\theta_1$
  $q$-order sublines.
\end{proposition}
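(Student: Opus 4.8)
The plan is to obtain the claimed count as a direct specialization of formula \eqref{e:numero-sublines} of Theorem \ref{t:numero-sublines} to the case in which $t$ is prime. The only ingredient requiring the hypothesis is the evaluation of the sum $\sum_{y\in\Fqt\setminus\Fq}[\Fq(y):\Fq]$ that appears there; everything else is routine arithmetic on the quantities $\theta_s$.

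First I would argue that, since $t$ is prime, every term of this sum equals $t$. Indeed, for any $y\in\Fqt$ the field $\Fq(y)$ is an intermediate field between $\Fq$ and $\Fqt$, so the degree $[\Fq(y):\Fq]$ divides $[\Fqt:\Fq]=t$. As $t$ is prime, this degree is either $1$ or $t$, and it equals $1$ precisely when $y\in\Fq$. Hence $[\Fq(y):\Fq]=t$ for every $y\in\Fqt\setminus\Fq$. Since $|\Fqt\setminus\Fq|=q^t-q$, the sum equals $t(q^t-q)$.

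Next I would substitute this value into \eqref{e:numero-sublines}. Using $q^t-q=q(q^{t-1}-1)$ one obtains
\[
  \frac{\sum\limits_{y\in\Fqt\setminus\Fq}[\Fq(y):\Fq]}{q(q-1)}
  =\frac{t(q^t-q)}{q(q-1)}=\frac{t(q^{t-1}-1)}{q-1}=t\,\theta_{t-2}.
\]
Subtracting $\theta_{t-2}$ leaves $(t-1)\theta_{t-2}$, and multiplying by $\theta_{t-1}/(q+1)$ yields $(t-1)\theta_{t-1}\theta_{t-2}/(q+1)$. Finally, recalling that $\theta_1=(q^2-1)/(q-1)=q+1$, the denominator $q+1$ may be rewritten as $\theta_1$, giving exactly $(t-1)\theta_{t-1}\theta_{t-2}/\theta_1$.

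Since each step is an elementary manipulation, there is no substantial obstacle here: the single point that genuinely uses the assumption is the primality argument forcing $[\Fq(y):\Fq]=t$ for every $y\notin\Fq$, after which the identification of the general expression with $(t-1)\theta_{t-1}\theta_{t-2}/\theta_1$ is purely a matter of rewriting the $\theta_s$. I would therefore expect the whole proof to be a short corollary-style computation.
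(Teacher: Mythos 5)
Your proof is correct and is exactly the argument the paper intends: the proposition is stated as a corollary of Theorem \ref{t:numero-sublines}, and your two steps --- noting that primality of $t$ forces $[\Fq(y):\Fq]=t$ for every $y\in\Fqt\setminus\Fq$, then simplifying $\frac{t(q^t-q)}{q(q-1)}=t\,\theta_{t-2}$ and rewriting $q+1=\theta_1$ --- supply precisely the omitted computation.
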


\subsection{\texorpdfstring{Structure of the $q$-order sublines}{Structure of the q-order sublines}}
\label{s:strutture}

\begin{definition}
\label{order}
Let $P=\la x \ra_q$ be a point of $\PG_q(\F_{q^t})$. We define $o(P)$, the order of $P$, as the smallest integer $m$, such that $x \in \F_{q^m}$. Instead of $o(P)=o(\la x \ra_q)$, we also write $o(x)$. Let $\ell$ be a line of $\PG_q(\F_{q^t})$, and let $\hat\ell$ be the corresponding two-dimensional $\F_q$-subspace of $\F_{q^t}$. We define $o(\ell)$, the order of $\ell$,  as the smallest integer $m$ such that $\hat\ell$ is contained in a one-dimensional $\F_{q^m}$-subspace of $\F_{q^t}$. 
Note that if $\la x \ra_q$ and $\la y \ra_q$ are two different points of $\ell$, then $o(y/x) = o(\ell)$. In particular, if $\la 1 \ra_q \in \ell$, then for each $\la z \ra_q \in \ell \setminus \la 1 \ra_q$ we have $o(z)=o(\ell)$. 
\end{definition}

\begin{proposition}
\label{p:inverse}
Consider $\Lb=p_{\Gamma,\ell_0}(\Sigma)=\{\la(\mu,\mu^\sigma)\ra_{q^t} \mid \mu \in \F_{q^t}^*\}$ with 
$\sigma \colon x \mapsto x^{q^{\nu}}$ (cf. (\ref{e:L-2c})) in 
$\PG(t-1,q^t)$, $q\geq t$. Then for each $q$-order subline $r$ contained in $\Lb$, $\Sigma\cap p^{-1}_{\Gamma,\ell_0}(r)$ is projectively equivalent to the set of all points $\la x^{\delta\theta_{h-1}}\ra_q$, where $x$ varies on a line $\ell$ of $\PG_q(\Fqt)$, $\delta$ is an integer such that
  \begin{equation}\label{e:d-theta}
    \delta\theta_{\nu-1}\equiv 1 {\pmod {\theta_{t-1}}}
  \end{equation}
  and $h$ is such that a transversal line $f$ to the regulus $\cF(r)$ is contained in a
  subspace of $\cS_h$. Also, $o(\ell)$ does not divide $h$. 
\end{proposition}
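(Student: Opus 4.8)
The plan is to move the entire problem into $\PG_q(\Fqt)$ by means of the projectivity $\iota$ of \eqref{e:iota}. First I would record the key identity $p_{\Gamma,\ell_0}(\iota(\la\lambda\ra_q))=\la(\lambda,\lambda^\sigma)\ra_{q^t}$: indeed $\iota(\la\lambda\ra_q)=\la(\lambda^{\sigma^2},\ldots,\lambda^{\sigma^{t-1}},\lambda,\lambda^\sigma)\ra_{q^t}$, and projecting from $\Gamma$ (equations $X_{t-1}=X_t=0$) onto $\ell_0$ keeps only the last two coordinates. Hence $p_{\Gamma,\ell_0}\circ\iota$ is the map $\la\lambda\ra_q\mapsto\la(\lambda,\lambda^\sigma)\ra_{q^t}$, which is a bijection onto $\Lb$ (injectivity holds because $\gcd(\nu,t)=1$ makes the fixed field of $\sigma$ equal to $\F_q$). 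Consequently $\iota^{-1}(\Sigma\cap p^{-1}_{\Gamma,\ell_0}(r))=\{\la\mu\ra_q\mid\la(\mu,\mu^\sigma)\ra_{q^t}\in r\}$, and it suffices to describe this set inside $\PG_q(\Fqt)$.

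Next I would exploit the transversal. By the proposition following Proposition \ref{p:f-di-l}, a transversal line $f$ to the regulus $\cF(r)$ lies in some $S_{h,k}$; since $z\mapsto(z,kz^{q^h})$ is an $\F_q$-linear bijection onto the subspace carrying $S_{h,k}$, the line $f$ is the image of a line $\ell$ of $\PG_q(\Fqt)$, so $f=\{\la(z,kz^{q^h})\ra_q\mid\la z\ra_q\in\ell\}$. For each such $z$, the point $\la(z,kz^{q^h})\ra_q$ lies on $\cF(P_z)$ with $P_z=\la(z,kz^{q^h})\ra_{q^t}$, the unique regulus element through it; as $f$ is a transversal with $|f|=q+1=|\cF(r)|$, each $P_z$ belongs to $r$ and $\la z\ra_q\mapsto P_z$ realizes the bijection $\ell\to r$. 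Writing $P_z=\la(\mu,\mu^\sigma)\ra_{q^t}\in\Lb$ and equating the two representatives of $P_z$ up to a scalar gives, after eliminating that scalar, the single relation $\mu^{q^\nu-1}=k\,z^{q^h-1}$.

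The computational core is to solve this relation for $\la\mu\ra_q$. Since $N(k)=1$ and $x\mapsto x^{q^\nu-1}$ has image the norm-$1$ elements, I would fix $\gamma$ with $k=\gamma^{q^\nu-1}$, reducing to $(\mu/\gamma)^{q^\nu-1}=z^{q^h-1}$. Substituting $q^\nu-1=(q-1)\theta_{\nu-1}$ and $q^h-1=(q-1)\theta_{h-1}$ turns the exponent congruence $e(q^\nu-1)\equiv q^h-1\pmod{q^t-1}$ into $e\,\theta_{\nu-1}\equiv\theta_{h-1}\pmod{\theta_{t-1}}$; from $\gcd(\nu,t)=1$ one gets $\gcd(q^\nu-1,q^t-1)=q-1$ and hence $\gcd(\theta_{\nu-1},\theta_{t-1})=1$, so the integer $\delta$ of \eqref{e:d-theta} exists and $e\equiv\delta\theta_{h-1}$ solves it. As the kernel of $x\mapsto x^{q^\nu-1}$ is $\F_q^*$, the solutions satisfy $\mu/\gamma\in\F_q^*\cdot z^{\delta\theta_{h-1}}$, i.e.\ $\la\mu\ra_q=\la\gamma z^{\delta\theta_{h-1}}\ra_q$. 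Letting $\la z\ra_q$ run over $\ell$ gives $\iota^{-1}(\Sigma\cap p^{-1}_{\Gamma,\ell_0}(r))=\{\la\gamma x^{\delta\theta_{h-1}}\ra_q\mid\la x\ra_q\in\ell\}$, which the projectivity ``multiplication by $\gamma^{-1}$'' carries to $\{\la x^{\delta\theta_{h-1}}\ra_q\mid\la x\ra_q\in\ell\}$, establishing the main assertion.

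Finally, for $o(\ell)\nmid h$ I would argue by contradiction from $|r|=q+1$. Since $o(\ell)\mid t$ always, if $o(\ell)\mid h$ then $o(\ell)\mid\gcd(h,t)$; hence for distinct $\la z\ra_q,\la z'\ra_q\in\ell$ the ratio $z/z'$, which has order exactly $o(\ell)$, lies in $\F_{q^{o(\ell)}}\subseteq\F_{q^{\gcd(h,t)}}$, so $(z/z')^{q^h-1}=1$ and therefore $P_z=P_{z'}$. This merges two points of $\ell$ into a single point of $r$, contradicting that $r$ has $q+1$ points. I expect the main obstacle to be the middle steps: correctly tracing a point of $f$ to its regulus element and back to the matching point $\la(\mu,\mu^\sigma)\ra_{q^t}$ of $r$, and then massaging the norm relation $\mu^{q^\nu-1}=k\,z^{q^h-1}$ into the precise exponent $\delta\theta_{h-1}$ modulo $\theta_{t-1}$; the remaining verifications are routine bookkeeping.
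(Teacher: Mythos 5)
Your proposal is correct and follows essentially the same route as the paper's proof: transfer to $\PG_q(\Fqt)$ via $\iota$, parametrize the points of $r$ through the transversal $f\subseteq S_{h,k}$, solve $\mu^{q^\nu-1}=kz^{q^h-1}$ using the congruence (\ref{e:d-theta}), absorb $k$ by a multiplicative projectivity, and derive $o(\ell)\nmid h$ by the same collapsing-points contradiction. The only (cosmetic) difference is that you write $k=\gamma^{q^\nu-1}$ via Hilbert 90 up front, whereas the paper raises the relation to the $\delta$-th power and uses $k=k'^{q-1}$.
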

\begin{proof}
  The line $f\subseteq S_{h,k}$ contains two distinct points in the form $\la(z,kz^{q^h})\ra_q$, $\la(w,kw^{q^h})\ra_q$, thus 
	the points of $r$ have form  
	\[\la(az+bw,akz^{q^h}+bkw^{q^h})\ra_{q^t}=\la(1,k(az+bw)^{q^h-1})\ra_{q^t},\] 
	where $(a,b)\in (\Fq^2)^*$. Take any $u\in \F_{q^t}^*$ and denote $\la u \ra_q \in \PG_q(\F_{q^t})$ by $U$. 
	Then $p_{\Gamma,\ell_0}(U^\iota)=\la(1,u^{q^\nu-1})\ra_{q^t}$  (cf.\ \eqref{e:iota}). 
	Thus $p_{\Gamma,\ell_0}(U^\iota) \in r$ if and only if $u^{q^\nu-1}=k(az+bw)^{q^h-1}$, that is, when 
  \[
    u^{q-1}=k^{\delta}\left[(az+bw)^{q^h-1}\right]^{\delta},
  \]
	for some $(a,b)\in (\Fq^2)^*$. Since $k=k'^{q-1}$ for some $k'\in \F_{q^t}^*$ and $u$ is defined up to a non-zero factor in $\Fq$,
  the point set of $\PG_q(\F_{q^t})$ mapped into $r$ by $\iota p_{\Gamma,\ell_0} $ is, 
  up to the collineation $\la x \ra_q \mapsto \la k'^{\delta}x \ra_q$, equals
		\[
    \{\la(a z+ b w)^{\delta\theta_{h-1}}\ra_q \mid (a,b)\in (\Fq^2)^*\}.
  \]
	Since $\iota$ is a projectivity, the first part follows with $\hat\ell=\la z,w \ra_q$. 
	Let $o(\ell)=o(z/w)=m$ and suppose to the contrary $m \mid h$. Then $\la x^{\delta\theta_{h-1}}\ra_q = \la y^{\delta\theta_{h-1}}\ra_q$, 
	for any two points $\la x \ra_q ,\la y \ra_q \in \ell$, a contradiction. 
\end{proof}

\begin{theorem}\label{t:subl-v-carriers}
  Let $t$ be prime and $q\geq t+1$. 
  There are precisely $\theta_{t-1}\theta_{t-2}/\theta_1$ $q$-order sublines of $\Lb$ which are projections
  under $p_{\Gamma,\ell_0}$ of normal rational curves of order $t-1$ containing the points
  $P_{\Gamma}^{\hs^i}$, $i=0,1,\ldots,t-1$.
\end{theorem}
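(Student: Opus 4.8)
The plan is to set up a bijection between the sublines in question and the family $\cN$ of all normal $\Fq$-rational curves of order $t-1$ passing through the $t$ points $P_\Gamma,P_\Gamma^\hs,\ldots,P_\Gamma^{\hs^{t-1}}$, and then to count $\cN$ by a double count. First I would record the configuration: since $p_{\Gamma,\ell_0}(\Sigma)=\Lb$ is scattered and $|\Sigma|=|\Lb|=\theta_{t-1}$, the projection restricts to a bijection $\Sigma\to\Lb$, so every $q$-order subline $r\subseteq\Lb$ has a well-defined preimage $r'=\Sigma\cap p_{\Gamma,\ell_0}^{-1}(r)$ of $q+1$ points, and ``$r$ is a projection of $\cC$'' is read as $r'=\cC\cap\Sigma$. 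Note also that $\Gamma=\la P_\Gamma,\ldots,P_\Gamma^{\hs^{t-3}}\ra$ is the span of $t-2$ points of any such $\cC$; since a normal rational curve of order $t-1$ has every $t$ of its points independent, $\Gamma\cap\cC$ consists of exactly these $t-2$ (imaginary, hence non-$\Fq$-rational) points, so $\cC\cap\Sigma$ avoids the centre.

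Next I would show that $\Phi:\cN\to\{q\text{-order sublines of }\Lb\}$, $\cC\mapsto p_{\Gamma,\ell_0}(\cC\cap\Sigma)$, is a well-defined bijection onto the set described in the statement. For well-definedness the key geometric fact is that projecting a normal rational curve of order $t-1$ from the span of $t-2$ of its points is a projectivity onto $\ell_0$ (the image being a normal rational curve of order $1$): in a parametrization $y\mapsto R(y)$ of $\cC$ the common factor through the $t-2$ centre points cancels, leaving a degree-one map. Injectivity of $p_{\Gamma,\ell_0}$ on $\cC\setminus\Gamma$ also follows, since two points with equal image together with the $t-2$ centre points would be $t$ dependent points of $\cC$. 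On the other hand $\cC\cap\Sigma$, being the $\Fq$-rational point set of the $\Fq$-rational curve $\cC=\cC_t^\tau$ with $\tau\in\PGL(t,q)$, is a $q$-order subline of $\cC$ (in the parameter line it is the $\tau$-image of the standard $\PG(1,q)$). A projectivity carries $q$-sublines to $q$-sublines, so $\Phi(\cC)$ is a genuine $q$-order subline of $\Lb$. Injectivity of $\Phi$ then follows from the bijectivity of $p_{\Gamma,\ell_0}|_\Sigma$ and the uniqueness in Proposition~\ref{t:NRC-reali}: if $\Phi(\cC)=\Phi(\cC')$ then $\cC\cap\Sigma=\cC'\cap\Sigma$, and any two of these $\Fq$-rational points determine the curve uniquely. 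For surjectivity onto the described set, if a subline is the projection of any normal rational curve $\cC$ of order $t-1$ through the orbit, then $\cC$ must carry $q+1$ $\Fq$-rational points, whence $\cC$ is $\Fq$-rational by the characterization preceding Proposition~\ref{t:NRC-reali}, i.e.\ $\cC\in\cN$.

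Finally I would count $\cN$ by double counting the incidences $(\{Q_1,Q_2\},\cC)$ with $Q_1\neq Q_2$ distinct $\Fq$-rational points lying on $\cC\in\cN$. By Proposition~\ref{t:NRC-reali} every unordered pair of distinct $\Fq$-rational points lies on exactly one member of $\cN$ (the unique curve through the pair and the orbit, automatically $\Fq$-rational), giving $\binom{\theta_{t-1}}{2}$ incidences; while each $\cC\in\cN$ carries exactly $q+1$ $\Fq$-rational points, hence $\binom{q+1}{2}$ pairs. Equating,
\[
  |\cN|=\frac{\binom{\theta_{t-1}}{2}}{\binom{q+1}{2}}
  =\frac{\theta_{t-1}(\theta_{t-1}-1)}{q(q+1)}
  =\frac{\theta_{t-1}\,\theta_{t-2}}{\theta_1},
\]
using $\theta_{t-1}-1=q\theta_{t-2}$ and $\theta_1=q+1$. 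Since $\Phi$ is a bijection, this is the desired number of sublines.

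I expect the main obstacle to be the well-definedness step: proving rigorously that $p_{\Gamma,\ell_0}$ restricted to $\cC$ is a projectivity onto $\ell_0$ and that it sends the $\Fq$-rational subline $\cC\cap\Sigma$ to a $q$-order subline of $\Lb$. This combines the classical behaviour of projections of normal rational curves from spans of their points with the identification of $\cC\cap\Sigma$ as a subline of $\cC$. The counting and injectivity arguments are then routine consequences of Proposition~\ref{t:NRC-reali} and the scattered (hence bijective) projection $\Sigma\to\Lb$.
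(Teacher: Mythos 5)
Your proof is correct and follows essentially the same route as the paper: the count is obtained by the identical double count resting on Propositions \ref{p:t+2pts} and \ref{t:NRC-reali} (each pair of points of $\Sigma$ lies on a unique curve through the orbit of $P_\Gamma$, which is automatically $\Fq$-rational), and your key well-definedness step --- projecting $\cC$ from the span of $t-2$ of its points induces a parameter projectivity because the common degree-$(t-2)$ factor cancels --- is exactly the paper's Vandermonde identity $\det A_{t-1}=\bigl(x+\sum_{i=0}^{t-3}\alpha^{\sigma^i}\bigr)\det A_t$ expressed synthetically. The only real difference is bookkeeping: you make explicit the bijection $\Phi$ (injectivity via uniqueness of the curve through two rational points and the orbit, surjectivity via the characterization that a curve of order $t-1$ with $q+1$ rational points is $\Fq$-rational), which the paper compresses into the phrase ``it is enough to prove''.
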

\begin{proof}
  Since a normal rational curve of order $t-1$ 
  is uniquely determined by $t+2$ points in general position, 
  by Proposition\ \ref{p:t+2pts} any  
  two distinct
  points in $\Sigma$ are contained in precisely one such curve,
  which is $\Fq$-rational (cf.\ Proposition \ref{t:NRC-reali}), and a double counting shows that the number
  of curves is as stated.
  So it is enough to prove, extending the argument in \cite{LaVa2010},  
  that for any normal $\Fq$-rational curve $\cC$ of order $t-1$ in $\PG(t-1,q^t)$
  and any imaginary point $P\in\cC$,
  the projection of $\cC$ from the center $\Gamma=\la P,P^\hs,\ldots,P^{\hs^{t-3}}\ra$
  is a $q$-order subline.
  
  Up to a change of coordinates with coefficients in $\Fq$, $\cC=\cC_t$ (see (\ref{e:cC_i})), so
  $P=\la v\ra_{q^t}$, where $v=(1,\alpha,\alpha^2,\ldots,\alpha^{t-1})$,
  for an $\alpha$ such that $\Fq(\alpha)=\Fqt$.
  The vectors $v$, $v^{\sigma}$, $\ldots$, $v^{\sigma^{t-1}}$
  are $\Fq$-linearly independent.
  The axis of the projection is immaterial, so it can be identified with the line $\ell_1$ of equations
  $X_1=X_2=\ldots=X_{t-2}=0$ that by Proposition \ref{p:t+2pts} is disjoint with $\Gamma$.
  Note that by operating row reduction on the matrix
  \[
    A=\begin{pmatrix}1&x&x^2&\ldots&x^{t-1}\\ 1&\alpha&\alpha^2&\ldots&\alpha^{t-1}\\
    1&\alpha^{\sigma}&\alpha^{2\sigma}&\ldots&\alpha^{(t-1)\sigma}\\ \vdots&&&&\vdots\\
    1&\alpha^{\sigma^{t-3}}&\alpha^{2\sigma^{t-3}}&\ldots&\alpha^{(t-1)\sigma^{t-3}}\end{pmatrix}
  \]
  the last line is $(0,0,\ldots,1,\det A_{t}^{-1}\det A_{t-1})$, where
  $A_j$ is obtained from $A$ by deleting its $j$-th column, $j=t-1,t$.
  The matrix $A_t$ is a Vandermonde matrix, and
  by a general property of Vandermonde matrices,
  \[
    \det A_{t-1}=\left(x+\sum_{i=0}^{t-3}\alpha^{\sigma^i}\right)\det A_t.
  \]
  This implies that
  \[
    p_{\Gamma,\ell_1}(\cC\cap\Sigma)=
    \{\la(0,0,\ldots,0,1,x+\sum_{i=0}^{t-3}\alpha^{\sigma^i})\ra_{q^t}\mid x\in\Fq\}
    \cup\{\la(0,0,\ldots,0,1)\ra_{q^t}\}
  \]
  is a $q$-order subline.
\end{proof}
\begin{remark} Theorem \ref{p:unique-family} will state that the $q$-order sublines dealt with 
in Theorem \ref{t:subl-v-carriers} are all associated with a unique family $\cS_h$.
\end{remark}

\section{\texorpdfstring{$\theta_{\nu-1}^{-1} \theta_{h-1}$-powers of lines }{Powers of lines}}\label{theta}

\begin{definition}
For an integer $d$ and a point set $\cH \subseteq \PG_q(\F_{q^t})$, 
let $\cH^d=\{ \la x^d \ra_q \mid \la x \ra_q \in \cH\}$. 
\end{definition}

\begin{definition}
For any $w\in \F_{q^t}^*$ let $\lambda_w : \PG_q(\F_{q^t}) \rightarrow \PG_q(\F_{q^t})$ denote the projectivity $\la z \ra_q \mapsto \la wz \ra_q$.
\end{definition}

First we show three simple properties of the $\la x \ra_q \mapsto \la x^d \ra_q$ map. Proposition \ref{prop 3} says that 
the study of $\ell^d$ can be reduced to the study of $\ell^{d'}$, where $d\equiv d' {\pmod {\theta_{m-1}}}$, $m$ the order of $\ell$. 
Take a line $\ell$ of $\PG_q(\F_{q^t})$. When $t$ is not a prime, and hence $\F_{q^t}$ has non-trivial subfields, then Proposition \ref{prop 2} says that the dimension of $\la \ell^d \ra$ depends also on the choice of $\ell$ and not only on the choice of $d$. Proposition \ref{projeq} implies that the study of $\ell^d$ can be reduced to the study of $\ell'^d$, where $\ell'$ is a line such that $o(\ell)=o(\ell')$ and $\ell'$ contains $\la 1 \ra_q$.

\begin{proposition}
\label{prop 3}
If $\ell$ is a line of $\PG_q(\F_{q^t})$ with $o(\ell)=m$ and $d \equiv d' \pmod {\theta_{m-1}}$, then 
$\ell^d$ is $\PGL(t,q)$-equivalent with $\ell^{d'}$. 
\end{proposition}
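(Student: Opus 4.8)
The plan is to exhibit an explicit element of $\PGL(t,q)$ carrying $\ell^{d'}$ onto $\ell^{d}$; it will turn out to be one of the multiplication maps $\lambda_w$. First I would record the two structural facts that drive the argument. On one hand, since $\Fq\subseteq\Fqt$, multiplication by any fixed $a\in\Fqt^*$ is an invertible $\Fq$-linear transformation of $\Fqt$ viewed as $\Fq^t$, so each $\lambda_a$ lies in $\PGL(t,q)$. On the other hand, the exponent $\theta_{m-1}=(q^m-1)/(q-1)=1+q+\cdots+q^{m-1}$ is exactly the one defining the norm $N_{\F_{q^m}/\Fq}$; hence $s^{\theta_{m-1}}=N_{\F_{q^m}/\Fq}(s)\in\Fq^*$ for every $s\in\F_{q^m}^*$, so that $\la s^{\theta_{m-1}}\ra_q=\la 1\ra_q$. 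This is precisely why the hypothesis $d\equiv d'\pmod{\theta_{m-1}}$ is matched to $o(\ell)=m$: raising to the $\theta_{m-1}$-power is projectively trivial on points of order dividing $m$.

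The second step is to bring the coordinates of $\ell$ inside $\F_{q^m}$. By the definition of order, $o(\ell)=m$ means that $\hat\ell$ is contained in a one-dimensional $\F_{q^m}$-subspace $\F_{q^m}w$ of $\Fqt$ for some $w\in\Fqt^*$. By commutativity $w^{-1}\hat\ell\subseteq\F_{q^m}$, so every nonzero $x\in\hat\ell$ can be written $x=ws$ with $s\in\F_{q^m}^*$. Writing $d=d'+k\theta_{m-1}$ and using $s^{\theta_{m-1}}=N_{\F_{q^m}/\Fq}(s)\in\Fq^*$, I would then compute $x^d=w^{\,d-d'}N_{\F_{q^m}/\Fq}(s)^k\,x^{d'}$, whence $\la x^d\ra_q=\la w^{\,d-d'}x^{d'}\ra_q=\lambda_{w^{d-d'}}(\la x^{d'}\ra_q)$. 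Letting $x$ run over $\hat\ell\setminus\{0\}$ gives $\ell^d=\lambda_{w^{d-d'}}(\ell^{d'})$ with $\lambda_{w^{d-d'}}\in\PGL(t,q)$, which is the assertion.

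The step I expect to be the main obstacle is the bookkeeping of the scaling factor $w$. The order hypothesis does not place $\hat\ell$ inside $\F_{q^m}$ itself but only inside a scalar multiple $\F_{q^m}w$, and $w$ need not belong to $\F_{q^m}$, so the norm identity cannot be applied to the elements of $\hat\ell$ directly. The remedy is to factor out the common $w$ first, apply the norm identity to the residual $s\in\F_{q^m}^*$, and let the leftover power $w^{d-d'}$ be absorbed by the projectivity $\lambda_{w^{d-d'}}$; this is exactly what lets me avoid first normalizing $\ell$ so that $\la 1\ra_q\in\ell$. Once $w$ is isolated the remaining computation is routine, since the factor $N_{\F_{q^m}/\Fq}(s)^k$ lies in $\Fq^*$ and is therefore invisible projectively.
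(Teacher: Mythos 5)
Your proposal is correct and takes essentially the same route as the paper's proof: both decompose $\hat\ell$ inside a coset $\F_{q^m}w$ (the paper's $\alpha$ is your $w$) and exhibit the multiplication map $\lambda_{w^{k\theta_{m-1}}}\in\PGL(t,q)$ as the required projectivity, using that the $\theta_{m-1}$-th power of any $s\in\F_{q^m}^*$ is its norm over $\Fq$ and hence projectively trivial. The only cosmetic difference is that the paper verifies the pointwise equality via the criterion $\la u\ra_q=\la v\ra_q\Leftrightarrow u^{q-1}=v^{q-1}$ and solves for the multiplier $\beta=\alpha^{\theta_{m-1}K}$, whereas you obtain the same multiplier by direct factorization $x^d=w^{\,d-d'}N_{\F_{q^m}/\Fq}(s)^k x^{d'}$.
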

\begin{proof}
Let $x=\alpha f_1$ and $y=\alpha f_2$ for some $f_1, f_2 \in \F_{q^m}$ and $\alpha\in \F_{q^t}$ such that $\ell=\la x,y \ra_q$. 
We show that an element $\beta$ of $\F_{q^t}^*$ exists such that $\lambda_{\beta}(P^d)=P^{d'}$ for each $P\in \ell$. 
Take a point $P\in \ell$, so $P=\la \eta x + \mu y \ra_q$ for some $\eta, \mu \in \F_q$. 
Then $\lambda_{\beta}(P^d)=P^{d'}$ if and only if $\beta^{q-1}(\eta x + \mu y)^{d(q-1)}=(\eta x + \mu y)^{d'(q-1)}$, that is,
$\beta^{q-1}=(\eta x + \mu y)^{(d'-d)(q-1)}$. We have $d-d'=K\theta_{m-1}$ for some integer $K$, thus 
$(\eta x + \mu y)^{(d-d')(q-1)}=(\eta x + \mu y)^{(q^m-1)K}=\alpha^{(q^m-1)K}$. 
It follows that $\alpha^{\theta_{m-1}K}$ is a good choice for $\beta$. 
\end{proof}

\begin{proposition}
\label{prop 2}
Let $\ell$ be a line of $\PG_q(\F_{q^t})$ with $o(\ell)=m$. 
Then $\ell^d$ is contained in a subspace $\PG(m-1,q)$ of $\PG_q(\F_{q^t})$. 
Also, there is a line $\ell'$ in $\PG_q(\F_{q^m})$ with $o(\ell)=o(\ell')$, such that $\ell^d$ is $\PGL(t,q)$-equivalent with $\ell'^d$. 
\end{proposition}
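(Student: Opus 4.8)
The plan is to exploit the definition of $o(\ell)=m$ directly: since $m$ is the smallest integer such that $\hat\ell$ lies in a one-dimensional $\F_{q^m}$-subspace of $\Fqt$, I can write $\hat\ell=\la\alpha f_1,\alpha f_2\ra_q$ for some $\alpha\in\Fqt^*$ and $\F_q$-linearly independent $f_1,f_2\in\F_{q^m}$. This is exactly the normalisation already used in the proof of Proposition \ref{prop 3}, and it reduces everything to computations inside $\F_{q^m}$ twisted by the scalar $\alpha$.

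For the first assertion I would observe that every point of $\ell$ has the form $\la\alpha g\ra_q$ with $g=\eta f_1+\mu f_2\in\F_{q^m}$, $(\eta,\mu)\in(\F_q^2)^*$. Raising to the $d$-th power gives $\la\alpha^d g^d\ra_q$, and since $g^d\in\F_{q^m}$ we obtain $\alpha^d g^d\in\alpha^d\F_{q^m}$. Hence $\ell^d\subseteq\PG_q(\alpha^d\F_{q^m})$. As $\alpha^d\F_{q^m}$ is the image of the $m$-dimensional $\F_q$-subspace $\F_{q^m}$ under the $\F_q$-linear bijection $z\mapsto\alpha^d z$, it is itself $m$-dimensional over $\F_q$, so $\PG_q(\alpha^d\F_{q^m})$ is a $\PG(m-1,q)$. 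This settles that $\ell^d$ lies in a $\PG(m-1,q)$.

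For the second assertion I would set $\ell'=\la f_1,f_2\ra_q$, a line of $\PG_q(\F_{q^m})$. The key verification is $o(\ell')=m$: clearly $o(\ell')\le m$ because $\hat{\ell'}\subseteq\F_{q^m}$; conversely, if $\hat{\ell'}$ lay in some $\beta\F_{q^{m'}}$ with $m'<m$, then $\hat\ell=\alpha\hat{\ell'}$ would lie in $\alpha\beta\F_{q^{m'}}$, contradicting the minimality of $m=o(\ell)$. Finally, comparing the parametrisations gives $\ell^d=\{\la\alpha^d g^d\ra_q\mid g\in\hat{\ell'}^*\}=\lambda_{\alpha^d}(\ell'^d)$, and since multiplication by $\alpha^d\in\Fqt^*$ is an $\F_q$-linear bijection of $\Fqt$, the projectivity $\lambda_{\alpha^d}$ belongs to $\PGL(t,q)$. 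This exhibits the required $\PGL(t,q)$-equivalence.

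There is no serious obstacle here; the argument is essentially bookkeeping around the normalisation $\hat\ell=\alpha\hat{\ell'}$. The one point that requires care is the claim $o(\ell')=m$: one must invoke the minimality in the definition of $o(\ell)$ rather than merely the containment $\hat{\ell'}\subseteq\F_{q^m}$, since the latter yields only $o(\ell')\le m$. The other mild subtlety is checking that $\lambda_{\alpha^d}$ lies in $\PGL(t,q)$ and not just in the larger group $\PGL(t,q^t)$; this holds because scalar multiplication by any element of $\Fqt$ is $\F_q$-linear on the $t$-dimensional space $\Fqt$.
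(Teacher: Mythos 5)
Your proof is correct and follows essentially the same route as the paper: the same normalisation $\hat\ell=\la\alpha f_1,\alpha f_2\ra_q$ with $f_1,f_2\in\F_{q^m}$, the same line $\ell'=\la f_1,f_2\ra_q$, and the same projectivity $\lambda_{\alpha^d}$. If anything you streamline it slightly (factoring $\alpha^d$ out of $(\alpha g)^d$ directly instead of the paper's binomial expansion) and you spell out the two points the paper leaves implicit, namely that $o(\ell')=m$ uses the minimality in the definition of $o(\ell)$ and that $\lambda_{\alpha^d}$ lies in $\PGL(t,q)$.
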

\begin{proof}
We have $\ell=\la x,y \ra_q$ for some $x=\alpha f_1$ and $y=\alpha f_2$, where $f_1, f_2 \in \F_{q^m}$ and $\alpha\in \F_{q^t}$. 
Then for each $\mu, \eta \in \F_q$ we have 
\[(\mu x + \eta y)^d = \sum_{i=0}^d \binom{d}{i} (\mu \alpha f_1)^i (\eta \alpha f_2)^{d-i}= \alpha^d \sum_{i=0}^d \gamma_i f_i',\]
where $\gamma_i \in \F_q$ and $f_i'\in \F_{q^m}$ for $i=0,1,\ldots, d$, and hence 
$(\mu x + \eta y)^d=\alpha^d f$ for some $f\in \F_{q^m}$ (depending on $\mu$ and $\eta$). 
Let $\ell'=\la f_1, f_2 \ra_q \subseteq \PG_q(\F_{q^m})$. Then $\lambda_{\alpha^d}$ is a projectivity which maps 
$\ell'^d$ into $\ell^d$. It is easy to see that $o(\ell)=o(\ell')$. 
\end{proof}

\begin{proposition}
\label{projeq} 
Let $\ell$ be a line of $\PG_q(\F_{q^t})$ and let $P$ and $Q$ be any two different points of $\ell$. There exists a projectivity $\mu$ of 
$\PG_q(\F_{q^t})$ and a line $\ell'=\la 1,z \ra_q$ such that $\mu(P^d)=\la 1 \ra_q$, $\mu(Q^d)=\la z^d \ra_q$, $\mu(\ell^d)=\ell'^d$ and $o(\ell)=o(\ell')$.  
\end{proposition}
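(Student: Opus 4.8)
The plan is to produce $\mu$, $z$, and $\ell'$ explicitly by a single scalar multiplication, exploiting the projectivities $\lambda_w$ introduced above. Write $P=\la p\ra_q$ and $Q=\la r\ra_q$ with $p,r\in\F_{q^t}^*$; since $P\neq Q$, the elements $p$ and $r$ are $\F_q$-linearly independent, and the two-dimensional $\F_q$-subspace underlying $\ell$ is $\hat\ell=\la p,r\ra_q$. First I would set
\[
  z=r/p,\qquad \mu=\lambda_{p^{-d}},\qquad \ell'=\la 1,z\ra_q.
\]
Note that $\ell'$ is indeed a line, because $1=p/p$ and $z=r/p$ are $\F_q$-linearly independent (multiplication by $p^{-1}$ is an $\F_q$-linear bijection of $\F_{q^t}$), and $\mu$ is a projectivity by the very definition of $\lambda_w$.

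Next I would verify the two point conditions by direct computation. Since $P^d=\la p^d\ra_q$ and $Q^d=\la r^d\ra_q$, applying $\mu=\lambda_{p^{-d}}$ gives $\mu(P^d)=\la p^{-d}p^d\ra_q=\la 1\ra_q$ and $\mu(Q^d)=\la p^{-d}r^d\ra_q=\la(r/p)^d\ra_q=\la z^d\ra_q$, exactly as required.

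Then I would check $\mu(\ell^d)=\ell'^d$ by parametrizing both sides over $(a,b)\in(\F_q^2)^*$. A point of $\ell$ is $\la ap+br\ra_q$, so $\ell^d=\{\la(ap+br)^d\ra_q\}$ and
\[
  \mu(\ell^d)=\{\la p^{-d}(ap+br)^d\ra_q\}=\{\la(a+bz)^d\ra_q\};
\]
on the other hand a point of $\ell'$ is $\la a\cdot 1+b\cdot z\ra_q=\la a+bz\ra_q$, whence $\ell'^d=\{\la(a+bz)^d\ra_q\}$, and the two sets coincide.

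Finally, the equality $o(\ell)=o(\ell')$ follows from the observation that $\hat\ell'=p^{-1}\hat\ell$, i.e.\ $\ell'=\lambda_{p^{-1}}(\ell)$: if $\hat\ell\subseteq\alpha\F_{q^m}$ for some $\alpha\in\F_{q^t}$, then $\hat\ell'\subseteq p^{-1}\alpha\F_{q^m}$, so $o(\ell')\le o(\ell)$, and the reverse inequality follows by applying $\lambda_p$. The whole argument is essentially forced by the requirement $\mu(P^d)=\la 1\ra_q$, so there is no genuine obstacle; the only point requiring care is this last step, where one must recall from Definition \ref{order} that $o(\ell)$ measures containment of $\hat\ell$ in a one-dimensional $\F_{q^m}$-subspace, and note that multiplication by a fixed element of $\F_{q^t}^*$ carries such subspaces to subspaces of the same kind.
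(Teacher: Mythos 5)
Your proposal is correct and follows essentially the same route as the paper's own proof: the paper also sets $z=y/x$ and takes $\mu$ to be the inverse of $\lambda_{x^d}$, which is exactly your $\lambda_{p^{-d}}$. The only difference is that you spell out the verification of $\mu(\ell^d)=\ell'^d$ and of $o(\ell)=o(\ell')$ (via $\hat\ell'=p^{-1}\hat\ell$), steps the paper dismisses as immediate or trivial.
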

\begin{proof}
Let $P=\la x \ra_q$, $Q=\la y \ra_q$ and let $z=y/x$. Then 
$o(\ell)=o(\ell')$ trivially holds. 
We have $\ell=\{\la x w \ra_q \mid \la w \ra_q \in \ell'\}$, thus $\ell^d= \{ \la x^d u \ra_q \mid \la u \ra_q \in \ell'^d \}$. 
We have $P^d=\lambda_{x^d}(\la 1 \ra_q)$, $Q^d=\lambda_{x^d}(\la z^d \ra_q)$ and $\ell^d=\lambda_{x^d}(\ell'^d)$. 
Let $\mu$ be the inverse of $\lambda_{x^d}$. 
\end{proof}

\begin{proposition}
\label{num}
Let $\nu\geq 1$ and $h\geq 0$ be integers such that $\gcd(\nu,t)=1$. Also, let $m$ be a positive divisor of $t$. 
Then $\theta_{\nu-1}$ is invertible modulo $\theta_{t-1}$, denote its inverse by $\theta_{\nu-1}^{-1}$. 
Also, denote\footnote{In this section $\theta_{\nu-1}^{-1}$ and $\nu^{-1}$ will always denote inverses
modulo $\theta_{t-1}$ and $m$, respectively.}
the inverse of $\nu$ modulo $m$ by $\nu^{-1}$. Then we have 
\begin{equation}
\label{congr}
\theta_{\nu-1}^{-1}\theta_{h-1} \equiv \frac{q^{\nu n}-1}{q^{\nu}-1} \pmod {\theta_{m-1}},
\end{equation}
for each  $n$, such that $n \equiv h\nu^{-1} \pmod m$.
\end{proposition}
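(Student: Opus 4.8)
The plan is to reduce the whole statement to a congruence modulo $\theta_{m-1}$, exploiting that $m\mid t$ forces $\theta_{m-1}\mid\theta_{t-1}$: indeed $q^m-1\mid q^t-1$, and both $\theta$'s are the corresponding quotients by $q-1$. First I would record the two elementary facts that drive everything. On one hand, $q^m-1=(q-1)\theta_{m-1}$ gives $q^m\equiv1\pmod{\theta_{m-1}}$, and grouping the geometric sum $\theta_{a-1}=1+q+\cdots+q^{a-1}$ into blocks of length $m$ yields the reduction lemma
\[
  \theta_{a-1}\equiv\theta_{(a\bmod m)-1}\pmod{\theta_{m-1}},
\]
with the convention $\theta_{-1}=0$ covering the case $m\mid a$. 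On the other hand, the identity $\theta_{a-1}=q^{a-b}\theta_{b-1}+\theta_{a-b-1}$, valid for $a\ge b\ge1$, shows that $\gcd(\theta_{a-1},\theta_{b-1})$ obeys the Euclidean algorithm on the indices, whence $\gcd(\theta_{\nu-1},\theta_{t-1})=\theta_{\gcd(\nu,t)-1}=\theta_0=1$. This both justifies the existence of $\theta_{\nu-1}^{-1}$ modulo $\theta_{t-1}$ and, since $\gcd(\nu,m)\mid\gcd(\nu,t)=1$, gives $\gcd(\theta_{\nu-1},\theta_{m-1})=1$ as well.

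Next I would observe that because $\theta_{m-1}\mid\theta_{t-1}$, any integer representing $\theta_{\nu-1}^{-1}\pmod{\theta_{t-1}}$ is automatically an inverse of $\theta_{\nu-1}$ modulo $\theta_{m-1}$. Consequently the target congruence may be cleared of the inverse: multiplying both sides by $\theta_{\nu-1}$, which is legitimate modulo $\theta_{m-1}$ by the invertibility just established, reduces the claim to
\[
  \theta_{h-1}\equiv\theta_{\nu-1}\cdot\frac{q^{\nu n}-1}{q^\nu-1}\pmod{\theta_{m-1}}.
\]

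The crux is then a single algebraic simplification: writing $\theta_{\nu-1}=(q^\nu-1)/(q-1)$, the right-hand product telescopes,
\[
  \theta_{\nu-1}\cdot\frac{q^{\nu n}-1}{q^\nu-1}=\frac{q^\nu-1}{q-1}\cdot\frac{q^{\nu n}-1}{q^\nu-1}=\frac{q^{\nu n}-1}{q-1}=\theta_{\nu n-1},
\]
so the statement becomes $\theta_{h-1}\equiv\theta_{\nu n-1}\pmod{\theta_{m-1}}$. Here the hypothesis $n\equiv h\nu^{-1}\pmod m$ enters: it gives $\nu n\equiv h\pmod m$, so $\nu n$ and $h$ share the same residue modulo $m$, and the reduction lemma sends both $\theta_{\nu n-1}$ and $\theta_{h-1}$ to the common value $\theta_{(h\bmod m)-1}$ modulo $\theta_{m-1}$. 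This closes the argument.

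I expect the only delicate points to be the bookkeeping in the reduction lemma when an index is a multiple of $m$, handled uniformly by the $\theta_{-1}=0$ convention, and the verification that the inverse taken modulo $\theta_{t-1}$ descends correctly to modulo $\theta_{m-1}$; once the telescoping identity $\theta_{\nu-1}\cdot(q^{\nu n}-1)/(q^\nu-1)=\theta_{\nu n-1}$ is spotted, everything else is routine.
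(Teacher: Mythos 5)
Your proof is correct and follows the same core route as the paper's: clear the inverse by multiplying through by $\theta_{\nu-1}$, telescope $\theta_{\nu-1}\cdot\frac{q^{\nu n}-1}{q^{\nu}-1}=\frac{q^{\nu n}-1}{q-1}=\theta_{\nu n-1}$, and reduce the exponent using $q^m\equiv 1\pmod{\theta_{m-1}}$ together with $\nu n\equiv h\pmod m$. Beyond this shared skeleton, you supply two points the paper leaves tacit. First, you actually prove the invertibility of $\theta_{\nu-1}$ modulo $\theta_{t-1}$ via the identity $\theta_{a-1}=q^{a-b}\theta_{b-1}+\theta_{a-b-1}$ and the resulting Euclidean rule $\gcd(\theta_{a-1},\theta_{b-1})=\theta_{\gcd(a,b)-1}$, whereas the paper merely asserts it in the statement; the same rule also gives you $\gcd(\theta_{\nu-1},\theta_{m-1})=1$, which is what legitimizes treating multiplication by $\theta_{\nu-1}$ as an equivalence modulo $\theta_{m-1}$. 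Second, your block-sum reduction lemma $\theta_{a-1}\equiv\theta_{(a\bmod m)-1}\pmod{\theta_{m-1}}$ works directly at the level of the $\theta$'s, which is a genuine gain in rigor: the paper's last step establishes $q^{\nu n}-1\equiv q^h-1\pmod{\theta_{m-1}}$ and then implicitly divides by $q-1$, an operation that is not valid modulo $\theta_{m-1}$ in general since $\gcd(q-1,\theta_{m-1})$ can exceed $1$ (e.g.\ $q=4$, $m=3$, where $\theta_2=21$); the correct reading is that the congruence holds modulo $q^m-1=(q-1)\theta_{m-1}$, and your lemma packages exactly this and so sidesteps the pitfall. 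Your remaining bookkeeping --- the descent of the inverse from modulo $\theta_{t-1}$ to modulo $\theta_{m-1}$ via $\theta_{m-1}\mid\theta_{t-1}$ (a consequence of $m\mid t$), and the degenerate case $m\mid h$ handled by the convention $\theta_{-1}=0$, consistent with the paper's definition of $\theta_s$ for $s\in\mathbb N\cup\{-1\}$ --- is sound.
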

\begin{proof}
The following congruence relation is equivalent to \eqref{congr},
\begin{equation}
\label{congr2}
\theta_{h-1} \equiv \frac{q^{\nu n}-1}{q-1} \pmod {\theta_{m-1}}.
\end{equation}
Because of the choice of $n$, we have $\nu n \equiv h \pmod m$, say 
$\nu n=Km+h$. As $q^m=(q-1)\theta_{m-1}+1$, it follows that $q^{Km+h}-1 \equiv (q^m)^Kq^h-1 \equiv q^h-1 \pmod {\theta_{m-1}}$.
\end{proof}

\begin{lemma}
\label{general}
Take a line $\ell$ of $\PG_q(\F_{q^m})$ with $o(\ell)=m$ and let $\nu\geq 1$, $\gcd(\nu,m)=1$, and $1 \leq n \leq m-1$ be integers.
Let
\[d=\frac{q^{n\nu}-1}{q^\nu-1}.\] 
Then any $n+1$ points of $\ell^d$ are in general position, that is, they span an $n$-dimensional subspace of $\PG_q(\F_{q^m})$.
\end{lemma}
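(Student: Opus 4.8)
The plan is to normalize the line, reduce the assertion to the nonvanishing of a single determinant, and factor that determinant into a Vandermonde part (carrying the chosen points) and a field part (carrying the conjugates of $z$). First, by Proposition~\ref{projeq} (with $\F_{q^m}$ in place of $\F_{q^t}$) I may assume $\ell=\la 1,z\ra_q$ with $o(z)=o(\ell)=m$, i.e.\ $\F_q(z)=\F_{q^m}$; then the conjugates $z_l:=z^{q^{l\nu}}$ are pairwise distinct for distinct residues $l$ modulo $m$ and satisfy $z_{l+m}=z_l$. A point of $\ell^d$ has the form $\la(a+bz)^d\ra_q$ with $[a:b]\in\PG(1,q)$; choosing $n+1$ distinct such points and distinct representatives $[a_j:b_j]$ ($j=0,\ldots,n$), it suffices to show that the vectors $v_j:=(a_j+b_jz)^d\in\F_{q^m}$ are $\F_q$-linearly independent.

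Since $\gcd(\nu,m)=1$, the map $\psi\colon x\mapsto x^{q^\nu}$ generates $\Gal(\F_{q^m}/\F_q)$. If $\sum_j c_j v_j=0$ with $c_j\in\F_q$, then applying $\psi^i$ and using $c_j^{q^\nu}=c_j$ gives $\sum_j c_j\psi^i(v_j)=0$ for every $i$; hence independence follows once the matrix $B=(\psi^i(v_j))_{0\le i,j\le n}$ is shown to be nonsingular. From $d=\sum_{l=0}^{n-1}q^{l\nu}$ and $(a_j+b_jz)^{q^{l\nu}}=a_j+b_jz_l$ I get $\psi^i(v_j)=\prod_{l=i}^{i+n-1}(a_j+b_jz_l)$, and expanding each product in elementary symmetric functions yields $B=S\,W$ with $W=(a_j^{\,n-r}b_j^{\,r})_{0\le r,j\le n}$ and $S=(e_r(z_i,\ldots,z_{i+n-1}))_{0\le i,r\le n}$. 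As $\det W=\prod_{0\le j<j'\le n}(a_{j'}b_j-a_jb_{j'})\neq0$ because the points are distinct, everything reduces to proving $\det S\neq0$.

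The hard part is this last determinant. The $i$-th row of $S$ consists of the coefficients of the monic polynomial $p_i(X)=\prod_{l=i}^{i+n-1}(X+z_l)=\sum_r S_{i,r}X^{n-r}$, so $\det S\neq0$ is equivalent to the $\F_{q^m}$-linear independence of $p_0,\ldots,p_n$. I would prove this by evaluating a hypothetical relation $\sum_i c_i p_i=0$ at the $n+1$ distinct arguments $X=-z_{n+k}$, $k=0,\ldots,n$ (distinct because $n,n+1,\ldots,2n$ are $n+1$ consecutive integers and $n+1\le m$). The coefficient matrix $T=(p_i(-z_{n+k}))_{k,i}$, whose entries are $\prod_{l=i}^{i+n-1}(z_l-z_{n+k})$, is lower triangular with nonzero diagonal: such an entry vanishes precisely when $(n+k-i)\bmod m\in\{0,\ldots,n-1\}$, which holds for $i>k$ but fails on the diagonal, where it equals $n\bmod m=n$. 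Hence $T$ is invertible, forcing all $c_i=0$, so $\det S\neq0$ and the lemma follows. The only delicate point is controlling this window-membership condition modulo $m$ when $2n\ge m$, and the triangular evaluation disposes of it uniformly.
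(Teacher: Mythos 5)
Your proof is correct, but it takes a genuinely different route from the paper's. The paper proceeds by induction on $n$: after the same normalization via Proposition \ref{projeq}, a hypothetical dependence $z^d=\alpha_n+\sum_{i=1}^{n-1}\alpha_i(1+\lambda_iz)^d$ is raised to the $q^\nu$-th power, the two relations are subtracted, the common factor $z^{q^{n\nu}}-z$ (nonzero because $o(z)=m\nmid n\nu$) is divided out, and a $q^\nu$-th root is taken; this telescopes the exponent from $1+q^\nu+\cdots+q^{(n-1)\nu}$ down to $1+q^\nu+\cdots+q^{(n-2)\nu}$ and yields a dependence among $n$ points of a $d''$-power of a line of the same order, contradicting the induction hypothesis. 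You instead give a direct, non-inductive linear-algebra argument: writing $v_j=(a_j+b_jz)^d=\prod_{l=0}^{n-1}(a_j+b_jz_l)$ and applying the conjugations $\psi^i$, you factor the Moore-type matrix $B$ as $SW$, where $W$ is the homogeneous Vandermonde matrix (nonsingular since the chosen points of $\PG(1,q)$ are distinct) and $S$ collects the elementary symmetric functions of the sliding windows $z_i,\dots,z_{i+n-1}$; you then prove $\det S\neq0$ by evaluating the window polynomials $p_i$ at $-z_{n+k}$. I checked the delicate points and they hold: the vanishing criterion $(n+k-i)\bmod m\in\{0,\dots,n-1\}$ is correct, it is satisfied exactly when $i>k$ in the range $0\le i,k\le n$, the diagonal survives because $n\bmod m=n$ thanks to $n\le m-1$, and the wrap-around when $2n\ge m$ can only annihilate sub-diagonal entries, which does not affect invertibility of the triangular matrix $T$; also, only the easy implication (nonsingularity of $B$ implies $\F_q$-independence of the $v_j$) is used, which is immediate since the $c_j\in\F_q$ are fixed by $\psi$. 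Comparatively, the paper's induction is shorter and avoids determinants altogether, while your argument isolates exactly where each hypothesis enters ($\gcd(\nu,m)=1$ and $o(\ell)=m$ give pairwise distinct conjugates $z_l$; distinctness of the points gives $\det W\neq0$; $n\le m-1$ gives the nonzero diagonal) and makes explicit the Veronese-like structure of $\ell^d$ that the paper only brings out afterwards, in Lemma \ref{span0}.
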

\begin{proof}
We proceed by induction on $n$. If $n=1$, then $d=1$ and hence $\ell^d=\ell$, in which case the assertion is trivial. 
Now let $n>1$ and suppose to the contrary that $\ell$ contains $n+1$ points, $Q_0,Q_1,\ldots,Q_n$, such that 
$Q_0^d \in \la Q_1^d, \ldots, Q_{n}^d\ra$. 
According to Proposition \ref{projeq} there is a line $\ell'$ such that $o(\ell)=o(\ell')$, $\ell'=\la 1, z\ra_q$ and 
\begin{equation}
\label{span}
\la z^d \ra_q \in \la Q_1'^d, \ldots, Q_{n-1}'^d, \la 1 \ra_q \ra,
\end{equation}
where $Q_i' \in \ell'$, for $i=1,\ldots,n-1$. 
It follows from \eqref{span} that there exist $\alpha_i \in \F_q$ for $i=1,2,\ldots, n$ and $\lambda_i\in \F_q^*$ for $i=1,2,\ldots, n-1$ 
such that $\lambda_j\neq \lambda_k$ for $j\neq k$ and
\begin{equation}
\label{eq1}
z^d=\alpha_{n}+\sum_{i=1}^{n-1} \alpha_i (1+\lambda_i z)^d.
\end{equation}
Now we use $d=1+q^{\nu}+\ldots+q^{(n-1)\nu}$ and we take $q^{\nu}$-th powers of both sides to obtain
\begin{equation}
\label{eq2}
z^{q^{\nu}+\ldots+q^{n\nu}}=\alpha_{n}+\sum_{i=1}^{n-1} \alpha_i (1+\lambda_i z)^{q^{\nu}+\ldots+q^{n\nu}}.
\end{equation}
Subtracting \eqref{eq1} from \eqref{eq2} yields
\[z^{q^{\nu}+\ldots+q^{(n-1)\nu}}\left(z^{q^{n\nu}}-z\right)=\]
\[\sum_{i=1}^{n-1} \alpha_i (1+\lambda_i z)^{q^{\nu}+\ldots+q^{(n-1)\nu}}\left((1+\lambda_i z)^{q^{n\nu}}-(1+\lambda_i z)\right).\]
We have $(1+\lambda_i z)^{q^{n\nu}}-(1+\lambda_i z)=\lambda_i\left(z^{q^{n\nu}}-z\right)$, which is non-zero because $o(z)=o(\ell)=m$ does not divide $n\nu$. So we can divide both sides by $z^{q^{n\nu}}-z$ and take $q^{\nu}$-th roots to obtain
\[z^{1+q^{\nu}+\ldots+q^{(n-2)\nu}}=\sum_{i=1}^{n-1} \alpha_i\lambda_i (1+\lambda_i z)^{1+q^{\nu}+\ldots+q^{(n-2)\nu}}.\]
According to the induction hypothesis any $n$ points of $\ell'^{1+q^{\nu}+\ldots+q^{(n-2)}\nu}$ are in general position, a contradiction.
\end{proof}

\begin{corollary}
\label{cor}
Take a line $\ell$ of $\PG_q(\F_{q^t})$ and let $o(\ell)=m$. 
Take positive integers $\nu, h\in\{1,2,\ldots,t-1\}$ such that $\gcd(\nu,t)=1$.
Suppose that $m$ does not divide $h$ and let $n\equiv h\nu^{-1} \pmod m$ such that $1\leq n \leq m-1$. 
Let $d={\theta_{\nu-1}^{-1}\theta_{h-1}}$. If $q \geq n$, then $\ell^d$ contains $n+1$ points in general position. 
\end{corollary}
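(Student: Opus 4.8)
The plan is to reduce the claim to Lemma \ref{general} through the reduction results already proved, so that the argument becomes bookkeeping on the exponent and the ambient field. First I would check that the hypotheses match up. Since $m\mid t$ and $\gcd(\nu,t)=1$ we get $\gcd(\nu,m)=1$, so $\nu^{-1}\bmod m$ exists and $n\equiv h\nu^{-1}\pmod m$ is well defined; the assumption $m\nmid h$ forces $n\not\equiv 0\pmod m$, which is exactly what permits $n$ to be chosen with $1\le n\le m-1$.

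Next I would rewrite the exponent. By Proposition \ref{num}, applied with these $\nu,h,m,n$, the integer $d=\theta_{\nu-1}^{-1}\theta_{h-1}$ satisfies $d\equiv d'\pmod{\theta_{m-1}}$ with $d'=(q^{\nu n}-1)/(q^\nu-1)$. Because $o(\ell)=m$, Proposition \ref{prop 3} gives that $\ell^d$ is $\PGL(t,q)$-equivalent to $\ell^{d'}$, and Proposition \ref{prop 2} produces a line $\ell'$ of $\PG_q(\F_{q^m})$ with $o(\ell')=o(\ell)=m$ for which $\ell^{d'}$ is $\PGL(t,q)$-equivalent to $\ell'^{d'}$. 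Thus, up to a projectivity of $\PG_q(\F_{q^t})$, studying $\ell^d$ is the same as studying $\ell'^{d'}$, where now the exponent has precisely the shape required by Lemma \ref{general}.

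Finally I would apply Lemma \ref{general} to $\ell'$: it lies in $\PG_q(\F_{q^m})$ with $o(\ell')=m$, the integer $\nu$ satisfies $\gcd(\nu,m)=1$, and $1\le n\le m-1$, so any $n+1$ points of $\ell'^{d'}$ are in general position. The hypothesis $q\ge n$ enters here only as a counting condition: the line $\ell'$ carries $q+1\ge n+1$ points, so we may select $n+1$ of them, and by the Lemma their images under the $d'$-power map are in general position (in particular pairwise distinct). Transporting back along the two $\PGL(t,q)$-equivalences, which preserve general position, yields the asserted $n+1$ points of $\ell^d$ in general position. I do not anticipate a genuine obstacle, since Lemma \ref{general} does the real work; the only points requiring care are that the congruence of Proposition \ref{num}, stated modulo $\theta_{m-1}$, is compatible with $d$ being built from an inverse taken modulo $\theta_{t-1}$ (it is, since $m\mid t$ gives $\theta_{m-1}\mid\theta_{t-1}$), and that $q\ge n$ serves only to supply enough points on the line rather than to strengthen the general-position conclusion itself.
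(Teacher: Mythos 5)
Your proof is correct and takes essentially the same route as the paper's: apply Proposition \ref{num} and Proposition \ref{prop 3} to replace $d$ by $d'=(q^{\nu n}-1)/(q^{\nu}-1)$, then Proposition \ref{prop 2} to pass to a line $\ell'$ of $\PG_q(\F_{q^m})$ with $o(\ell')=m$, and conclude by Lemma \ref{general}. Your side remarks --- that $q\ge n$ serves only to guarantee enough points on the line, and that $m\mid t$ makes $\theta_{m-1}\mid\theta_{t-1}$ so the moduli are compatible --- are accurate glosses of details the paper leaves implicit.
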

\begin{proof}
Let $d'=\frac{q^{\nu n}-1}{q^{\nu}-1}$. 
Proposition \ref{num} yields $d \equiv d' \pmod {\theta_{m-1}}$, thus Proposition 
\ref{prop 3} implies that $\ell^d$ and $\ell^{d'}$ are projectively equivalent. Also, Proposition \ref{prop 2} yields that 
$\ell^{d'}$ is projectively equivalent to $\ell'^{d'}$ for some line $\ell' \in \PG_q(\F_{q^m})$ with $o(\ell')=m$. 
Then the assertion follows from Lemma \ref{general}. 
\end{proof}

\begin{lemma}
\label{span0}
Let $\ell$ be a line of $\PG_q(\F_{q^t})$ and let $d$ be a non-negative integer. 
Then $d$ can be written uniquely as $d=\sum_{i=0}^{\infty}a_i q^i$ ($0\le a_i<q$, $i=0,1,\ldots$). 
Let $d_q=\sum_{i=0}^{\infty}a_i$ and suppose $q\geq d_q$. Also suppose that in $\ell^d$ there exists a set of $d_q+1$ points in general position. Denote by $\cB$ the $d_q$-subspace of $\PG_q(\F_{q^t})$ spanned by this $(d_q+1)$-set. Then $\ell^d$ is a normal rational curve of order $d_q$ in $\cB$.
\end{lemma}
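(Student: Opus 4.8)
The plan is to first normalize $\ell$ and then exhibit $\ell^d$ explicitly in suitable coordinates. By Proposition \ref{projeq} there is a projectivity $\mu$ and a line $\ell'=\la 1,z\ra_q$ with $\mu(\ell^d)=\ell'^d$; since projectivities preserve both normal rational curves and general position, it suffices to treat the case $\ell=\la 1,z\ra_q$. The points of $\ell$ are then $\la 1+\lambda z\ra_q$ for $\lambda\in\Fq$ together with $\la z\ra_q$, so
\[
  \ell^d=\{\la(1+\lambda z)^d\ra_q\mid\lambda\in\Fq\}\cup\{\la z^d\ra_q\}.
\]

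Next I would expand $(1+\lambda z)^d$ as a polynomial in $\lambda$. Writing $d=\sum_i a_iq^i$ and using that $\lambda\in\Fq$ gives $(1+\lambda z)^{q^i}=1+\lambda z^{q^i}$, whence
\[
  (1+\lambda z)^d=\prod_i(1+\lambda z^{q^i})^{a_i}=\sum_{s=0}^{d_q}P_s(z)\,\lambda^s,
\]
where each $P_s(z)\in\Fqt$. The total degree in $\lambda$ is $\sum_i a_i=d_q$; moreover the constant term is $P_0(z)=1$ and the leading coefficient is $P_{d_q}(z)=\prod_i(z^{q^i})^{a_i}=z^{\sum_i a_iq^i}=z^d$. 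Thus $\ell^d$ is the image of the rational map $\lambda\mapsto\la\sum_{s=0}^{d_q}P_s(z)\lambda^s\ra_q$, with $\la z^d\ra_q$ its value ``at infinity''.

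The crux is to identify the $\Fq$-subspace $\cB'=\la P_0(z),\ldots,P_{d_q}(z)\ra_q$ of $\Fqt$. Since every point of $\ell^d$ lies in $\PG_q(\cB')$, the span $\cB$ of $\ell^d$ satisfies $\dim\cB\le\dim_{\Fq}\cB'-1\le d_q$. On the other hand the hypothesis provides $d_q+1$ points of $\ell^d$ in general position, forcing $\dim\cB\ge d_q$. Hence $\dim_{\Fq}\cB'=d_q+1$, the vectors $P_0(z),\ldots,P_{d_q}(z)$ are $\Fq$-linearly independent, and $\cB=\PG_q(\cB')$. I expect this passage --- turning ``general position of the $d_q+1$ image points'' into ``$\Fq$-independence of the coefficient vectors $P_s(z)$'', and thereby pinning down $\cB$ exactly --- to be the main point requiring care.

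Finally, reading coordinates in the basis $\{P_0(z),\ldots,P_{d_q}(z)\}$ of $\cB'$, the vector $\sum_s P_s(z)\lambda^s$ has coordinates $(1,\lambda,\lambda^2,\ldots,\lambda^{d_q})$, and $\la z^d\ra_q=\la P_{d_q}(z)\ra_q$ becomes $\la(0,\ldots,0,1)\ra_q$. Therefore, in these coordinates,
\[
  \ell^d=\{\la(1,\lambda,\ldots,\lambda^{d_q})\ra_q\mid\lambda\in\Fq\}\cup\{\la(0,\ldots,0,1)\ra_q\},
\]
which is exactly the standard normal rational curve of order $d_q$ in $\cB\cong\PG(d_q,q)$; here the hypothesis $q\ge d_q$ guarantees the requisite $q+1\ge d_q+1$ distinct points, so this is a genuine normal rational curve of order $d_q$. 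Pulling back by $\mu^{-1}$ yields the statement for the original $\ell$.
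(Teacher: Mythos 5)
Your proof is correct, and while it opens exactly as the paper does, its key step is genuinely different. Both arguments reduce via Proposition \ref{projeq} to $\ell=\la 1,z\ra_q$ and expand $(1+\lambda z)^d=\prod_i(1+\lambda z^{q^i})^{a_i}$ using $\lambda^{q^i}=\lambda$. The paper then collects this by powers of $z$, writing $(1+\lambda z)^d=\sum_{i=0}^{d}z^if_i(\lambda)$ with $\deg f_i\le d_q$ and $f_d(\lambda)=\lambda^{d_q}$, and for each $\lambda$ explicitly solves the linear system expressing $(1+\lambda z)^d$ in the frame of the given general-position points $\la(1+\lambda_j z)^d\ra_q$, $\la z^d\ra_q$; invertibility rests on a Vandermonde determinant in the distinct parameters $\lambda_j$ (this is where $q\ge d_q$ is genuinely used, to have $d_q$ distinct elements of $\F_q$), and the inverse matrix $M^{-1}$ is the explicit projectivity carrying the standard curve to $\ell^d$. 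You instead collect by powers of $\lambda$ --- effectively the transposed coefficient array $P_s(z)$ --- which exhibits $\ell^d$ at once as the image of the standard normal rational curve under the $\F_q$-linear map $e_s\mapsto P_s(z)$ with $P_0=1$ and $P_{d_q}=z^d$; the only remaining point is that this map is injective with image the right subspace, and you get both from the hypothesis by the dimension count $\dim\cB\le\dim_{\F_q}\cB'-1\le d_q\le\dim\cB$. This eliminates the Vandermonde computation and the bookkeeping about representing the prescribed points by distinct $\lambda_j$'s (in your version $q\ge d_q$ is needed only implicitly, being forced by the existence of $d_q+1$ points on a set of size at most $q+1$); what the paper's version buys in exchange is an explicit coordinate frame adapted to the prescribed $(d_q+1)$-set, namely the concrete matrix $M^{-1}$. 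Two cosmetic remarks: $\cB$ is defined in the statement as the span of the chosen $(d_q+1)$-set, not of $\ell^d$, but your inequality chain shows both spans coincide with $\PG_q(\cB')$, so nothing is lost; and the degenerate case $d=d_q=0$, which the paper dismisses in one sentence, is trivially covered by your parametrization too, while for $d_q\ge1$ the coordinate $\lambda$ in $(1,\lambda,\ldots,\lambda^{d_q})$ shows the $q+1$ points are distinct, as needed for $\ell^d$ to be genuinely a curve of order $d_q$.
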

\begin{proof}
If $d=d_q=0$, then the statement is trivial. Otherwise, according to Proposition \ref{projeq}, we may assume $\ell=\la 1,z \ra_q$, where $\la z^d \ra_q$ is a point of a $(d_q+1)$-set whose elements span $\cB$. Denote these $d_q+1$ points by $\{\la(1+\lambda_1 z)^d\ra_q,\la(1+\lambda_2 z)^d\ra_q,\ldots,\la(1+\lambda_{d_q} z)^d\ra_q, \la z^d \ra_q\}$.
Fix $\{(1+\lambda_1 z)^d,(1+\lambda_2 z)^d,\ldots,(1+\lambda_{d_q} z)^d, z^d\}$ as a basis of the corresponding $(d_q+1)$-dimensional vector subspace of $\F_{q^t}$. To prove $\ell^d \subseteq \cB$ we should find for each $\lambda \in \F_q$ elements $\alpha_1,\ldots,\alpha_{d_q},\alpha \in \F_q$, considered as coordinates of the points of $\ell^d$, such that
\begin{equation}
\label{eq0}
(1+\lambda z)^d=\sum_{i=1}^{d_q} \alpha_i(1+\lambda_i z)^d + \alpha z^d.
\end{equation}
Note that for $\mu \in \F_q$ we have
\[(1+\mu z)^d=\prod_{i=0}^{\infty}(1+\mu z^{q^i})^{a_i}=\prod_{i=0}^{\infty}\sum_{j=0}^{a_i}z^{jq^i}\mu^j \binom{a_i}{j}=\sum_{i=0}^{d}z^if_i(\mu),\]
where $f_i$ is a polynomial over $\F_q$ of degree at most $d_q$. 
Note that $\deg\, f_i =d_q$ if and only if $i=d$. Also, $f_d(\mu)=\mu^{d_q}$. 
Thus \eqref{eq0} can be written as
\[\sum_{i=0}^{d}z^if_i(\lambda)=\left(\sum_{j=1}^{d_q} \alpha_j \sum_{i=0}^{d}z^if_i(\lambda_j)\right) + \alpha z^d,\]
\begin{equation}
\label{eki}
\sum_{i=0}^{d-1}z^i\left(f_i(\lambda)-\sum_{j=1}^{d_q} \alpha_j f_i(\lambda_j)\right) = z^d \left(\sum_{j=1}^{d_q} \alpha_j \lambda_j^{d_q} + \alpha - \lambda^{d_q}\right).
\end{equation}
If $f_i(X)\in \F_{q}[X]$ is $\sum_{k=0}^{d_q-1}a_{ik}X^k$ ($i=0,1,\ldots,d-1$), then 
\begin{equation}
\label{eki2}
f_i(\lambda)-\sum_{j=1}^{d_q} \alpha_j f_i(\lambda_j)=\sum_{k=0}^{d_q-1}a_{ik}(\lambda^k-\sum_{j=1}^{d_q}\alpha_j\lambda_j^k).
\end{equation}
We show that there exist $\alpha, \alpha_1,\ldots,\alpha_{d_q}\in \F_q$, such that $\lambda^k-\sum_{j=1}^{d_q}\alpha_j\lambda_j^k=0$ 
for each $k=0,\ldots, d_q-1$ and $\sum_{j=1}^{d_q} \alpha_j \lambda_j^{d_q} + \alpha - \lambda^{d_q}=0$. 
Then $\eqref{eki}$ and hence \eqref{eq0} have a solution. 
Consider the following matrix equation
\[\left[ \begin{array}{c} 1 \\ \lambda \\ \vdots \\ \lambda^{d_q} \end{array} \right] =
 \begin{bmatrix} 0 & 1 & \ldots & 1 \\ 0 & \lambda_1 & \ldots & \lambda_{d_q} \\ \vdots & \vdots & \vdots & \vdots \\ 
1 & \lambda_1^{d_q} & \ldots & \lambda_{d_q}^{d_q} \end{bmatrix} 
\left[ \begin{array}{c} \alpha \\ \alpha_1 \\ \vdots \\ \alpha_{d_q} \end{array} \right].\]
Denote the $(d_q+1)\times (d_q+1)$ matrix on the right-hand side by $M$ and denote by $N$ the $d_q\times d_q$ matrix obtained from $M$ by removing its first column and its last row. As $\det (M)=\pm \det(N)$ and $N$ is a Vandermonde matrix,
it follows from $\lambda_i \neq \lambda_j$ for $i\neq j$ that $\det (M)\neq 0$. 
Thus the system of equations has a unique solution. 
Let $\tau$ be the projectivity of $\cB$ whose matrix is $M^{-1}$. Then for each $\lambda\in \F_q$ we have 
$\la(1,\lambda,\ldots,\lambda^{d_q})\ra_q^{\tau}
=\la(1+\lambda z)^d\ra_q$ and $\la(0,0,\ldots,0,1)\ra_q^{\tau}=\la z^d \ra_q$. 
\end{proof}

\begin{theorem}
\label{maintheta}
Let $\ell$ be  a line of $\PG_q(\F_{q^t})$ and let $o(\ell)=m$. 
Take positive integers $\nu, h\in\{1,2,\ldots,t-1\}$ such that $\gcd(\nu,t)=1$.
Suppose that $m$ does not divide $h$ and let $n\equiv h\nu^{-1} \pmod m$ such that $1\leq n \leq m-1$. 
Let $d={\theta_{\nu-1}^{-1}\theta_{h-1}}$. If $q \geq n$, then $\ell^d$ is a normal rational curve of 
order $n$ in some $n$-subspace of 
$\PG_q(\F_{q^t})$. 
\end{theorem}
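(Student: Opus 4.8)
The plan is to reduce, through a short chain of projective equivalences, to the concrete exponent $d'=(q^{n\nu}-1)/(q^\nu-1)$ and then to read off the conclusion from Lemma \ref{span0}. The key observation driving the whole argument is that ``being a normal rational curve of a given order'' is invariant under $\PGL(t,q)$, whereas the hypothesis of Lemma \ref{span0} concerns the base-$q$ digit sum of the \emph{literal} integer exponent, a quantity that is \emph{not} preserved under reduction modulo $\theta_{m-1}$. Hence one must first pass to a well-chosen representative of the residue class of $d$ before invoking that lemma.

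First I would fix $d'=\dfrac{q^{n\nu}-1}{q^\nu-1}=1+q^\nu+q^{2\nu}+\cdots+q^{(n-1)\nu}$. Since $\nu\ge1$, the exponents $0,\nu,2\nu,\ldots,(n-1)\nu$ are pairwise distinct, so the base-$q$ expansion of $d'$ has coefficient $1$ in exactly these $n$ positions and $0$ elsewhere; in the notation of Lemma \ref{span0} this yields $d'_q=n$. I would also record that $m\mid t$ together with $\gcd(\nu,t)=1$ forces $\gcd(\nu,m)=1$, which is the genus-condition needed below.

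Next I would carry out the two reductions. By Proposition \ref{prop 2} there is a line $\ell'$ of $\PG_q(\F_{q^m})$ with $o(\ell')=m$ such that $\ell^d$ is $\PGL(t,q)$-equivalent to $\ell'^d$. By Proposition \ref{num} one has $d\equiv d'\pmod{\theta_{m-1}}$, whence Proposition \ref{prop 3} gives that $\ell'^d$ is $\PGL(t,q)$-equivalent to $\ell'^{d'}$. It therefore suffices to show that $\ell'^{d'}$ is a normal rational curve of order $n$. Applying Lemma \ref{general} to $\ell'$ (legitimate since $\gcd(\nu,m)=1$ and $1\le n\le m-1$), any $n+1$ points of $\ell'^{d'}$ are in general position; as $\ell'$ carries $q+1\ge n+1$ points, such an $(n+1)$-set exists.

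Finally I would apply Lemma \ref{span0} to the line $\ell'$ and the exponent $d'$: its hypotheses $q\ge n=d'_q$ and the existence of $d'_q+1=n+1$ points of $\ell'^{d'}$ in general position are exactly what the previous step supplies, so $\ell'^{d'}$ is a normal rational curve of order $n$ in the $n$-subspace spanned by those points. Transporting this back along the two projectivities, and using that $\PGL(t,q)$ preserves normal rational curves together with their order, shows that $\ell^d$ is a normal rational curve of order $n$ in some $n$-subspace of $\PG_q(\F_{q^t})$. I expect the only genuinely delicate point to be the bookkeeping just described---replacing $d$ by $d'$ before the digit-sum hypothesis of Lemma \ref{span0} becomes usable---since every remaining ingredient is a direct citation of an earlier result, and the digit-sum computation and the check $\gcd(\nu,m)=1$ are routine.
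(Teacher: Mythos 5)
Your proposal is correct and follows essentially the same route as the paper: replace $d$ by $d'=(q^{n\nu}-1)/(q^\nu-1)$ using Propositions \ref{num} and \ref{prop 3}, descend to a line of $\PG_q(\F_{q^m})$ via Proposition \ref{prop 2}, get $n+1$ points in general position from Lemma \ref{general} (the paper packages this chain as Corollary \ref{cor}), and conclude with Lemma \ref{span0} since $d'_q=n$. The only cosmetic difference is that you apply Lemma \ref{span0} to the reduced line $\ell'$ and transport back, whereas the paper applies it to $\ell^{d'}$ directly; your explicit checks ($\gcd(\nu,m)=1$ and the non-invariance of the digit sum under reduction modulo $\theta_{m-1}$) are exactly the implicit bookkeeping in the paper's argument.
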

\begin{proof}
Corollary \ref{cor} yields the existence of $n+1$ points of $\ell^d$ in general position. 
According to Propositions \ref{prop 3} and \ref{num}, $\ell^d$ is projectively equivalent to $\ell^{d'}$ with $d'=1+q^{\nu}+\ldots+q^{(n-1)\nu}$. 
The assertion follows from Lemma \ref{span0} since $d'_q=n$. 
\end{proof}

\begin{corollary}\emph{\cite{FaKiMaPa2002,Ha1983,He1995,LaZa2014, Ma2014}}
\label{manyaouthors}
If $\ell$ is a line of $\PG_q(\F_{q^t})$, $q+1\geq t$, then $\ell^{-1}$ is 
a normal rational curve in some $(m-1)$-space of 
$\PG_q(\F_{q^t})$ such that $m$ divides $t$.
\end{corollary}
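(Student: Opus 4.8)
The plan is to reduce the claim to Theorem \ref{maintheta} by rewriting the exponent $-1$ modulo $\theta_{m-1}$ and absorbing a single Frobenius power. Set $m=o(\ell)$; since $\hat\ell$ is two-dimensional over $\Fq$ we have $m\ge2$, and $m=[\Fq(z):\Fq]$ for the generators $z$ of $\ell$, so $m\mid t$. The hypothesis $q+1\ge t$ together with $m\le t$ gives $q\ge m-1$, which is precisely the bound $q\ge n$ that Theorem \ref{maintheta} will require with $n=m-1$.

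I would start from the congruence
\[
  -1\equiv q\,\theta_{m-2}\pmod{\theta_{m-1}},
\]
valid because $q\theta_{m-2}=q+q^2+\cdots+q^{m-1}=\theta_{m-1}-1$. By Proposition \ref{prop 3} this already gives that $\ell^{-1}$ is $\PGL(t,q)$-equivalent to $\ell^{\,q\theta_{m-2}}$. Now the Frobenius $\phi\colon\la y\ra_q\mapsto\la y^q\ra_q$ is $\Fq$-linear on $\F_{q^t}$ (because $a^q=a$ for $a\in\Fq$), hence lies in $\PGL(t,q)$, and directly from the definition of the $d$-power one has $\phi(\ell^{\theta_{m-2}})=\ell^{\,q\theta_{m-2}}$. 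Since a projectivity sends a normal rational curve to a normal rational curve of the same order spanning a subspace of the same dimension, it is enough to show that $\ell^{\theta_{m-2}}$ is a normal rational curve of order $m-1$ in an $(m-1)$-space. This is Theorem \ref{maintheta} with $\nu=1$ and $h=m-1$: then $\theta_{\nu-1}^{-1}\theta_{h-1}=\theta_0^{-1}\theta_{m-2}=\theta_{m-2}$, we have $h=m-1\in\{1,\ldots,t-1\}$ and $m\nmid h$, and $n\equiv h\nu^{-1}\equiv m-1\pmod m$ forces $n=m-1$; the bound $q\ge n$ was checked above. Hence $\ell^{\theta_{m-2}}$, and with it $\ell^{-1}$, is a normal rational curve of order $m-1$ in an $(m-1)$-space, with $m\mid t$, as required.

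The one delicate point, and the reason the Frobenius step cannot be dispensed with, is that $-1$ is not itself of the form $\theta_{\nu-1}^{-1}\theta_{h-1}$ modulo $\theta_{m-1}$. Indeed, by Proposition \ref{num} any such expression equals $\sum_{j=0}^{n-1}q^{\nu j}$, a sum of $n\le m-1$ distinct powers of $q$ that always contains the term $q^0$ (from $j=0$) and so cannot be congruent to $\theta_{m-1}-1=q+q^2+\cdots+q^{m-1}$, the natural representative of $-1$. Recognizing that this obstruction is exactly one factor of $q$ and clearing it through the $\PGL(t,q)$-Frobenius is the crux of the argument; the remaining work is only the routine verification that the parameters $\nu=1$, $h=m-1$, $n=m-1$ meet the hypotheses of Theorem \ref{maintheta}.
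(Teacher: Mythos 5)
Your proof is correct and takes essentially the same route as the paper: both decompose the inversion map into a Frobenius projectivity of $\PGL(t,q)$ composed with a $\theta$-power of the line, and then invoke Theorem \ref{maintheta} with $\nu=1$ to get a normal rational curve of order $m-1$. The only cosmetic difference is that the paper works with the global exponent $h=t-1$ (i.e.\ $d=\theta_{t-2}$, via the identity $\la x^{-1}\ra_q=\la x^{q\theta_{t-2}}\ra_q$ on all of $\F_{q^t}^*$), whereas you first reduce modulo $\theta_{m-1}$ by Proposition \ref{prop 3} and take $h=m-1$; since $t-1\equiv m-1\pmod m$, both choices force $n=m-1$ and the two arguments coincide.
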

\begin{proof}
The $\PG(t-1,q) \rightarrow \PG(t-1,q)$ map $\la x \ra_q \mapsto \la x^{-1} \ra_q$ is the composition of 
the maps $\la x \ra_q\mapsto \la x^{\theta_{t-2}} \ra_q $ and $\la x \ra_q \mapsto \la x^q \ra_q$. The latter one is a projectivity of 
$\PG_q(\F_{q^t})$ and hence for each line $\ell \subset \PG_q(\F_{q^t})$, $\ell^{-1}$ is projectively equivalent to 
$\ell^{\theta_{t-2}}$. Let $o(\ell)=m$. Since $m$ divides $t$, it cannot divide $t-1$ and hence Theorem \ref{maintheta} with $h=t-1$ and $\nu=1$ yields that $\ell^{-1}$ is a normal rational curve in some $m-1$ space of $\PG_q(q^t)$.
\end{proof}
As a corollary of Theorem  \ref{maintheta} it holds:
\begin{theorem}\label{t:tutte-NRC}
  Let $\Lb=p_{\Gamma,\ell_0}(\Sigma)$ be a scattered linear set of pseudoregulus type in 
  $\ell_0\cong\PG(1,q^t)$, $q\ge t$, and let $r$ be a $q$-order subline of $\ell_0$ contained in $\Lb$.
  Then $\Sigma\cap p^{-1}_{\Gamma,\ell_0}(r)$ is a normal rational curve in some $n$-subspace of $\Sigma$.
  More precisely, assuming Proposition \ref{p:inverse}, a divisor $m>1$ of $t$ exists such that 
  $\Sigma\cap p^{-1}_{\Gamma,\ell_0}(r)$ is 
	a normal rational curve of order $n$ in some $n$-subspace of $\Sigma$, where 
	$1\leq n \leq m-1$ and $n \equiv h\nu^{-1} \pmod m$.
\end{theorem}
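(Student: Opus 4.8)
The plan is to obtain the theorem as a direct corollary of Theorem \ref{maintheta}, with Proposition \ref{p:inverse} supplying the translation from the projective geometry of $\Sigma$ to the power map $\la x\ra_q\mapsto\la x^d\ra_q$. First I would invoke Proposition \ref{p:inverse}: for the given $q$-order subline $r\subseteq\Lb$ it asserts that $\Sigma\cap p^{-1}_{\Gamma,\ell_0}(r)$ is projectively equivalent to $\ell^{\delta\theta_{h-1}}$, where $\ell$ is a line of $\PG_q(\Fqt)$, the integer $\delta$ satisfies $\delta\theta_{\nu-1}\equiv1\pmod{\theta_{t-1}}$ (so that $\delta=\theta_{\nu-1}^{-1}$ in the notation of Proposition \ref{num}), the index $h$ records the family $\cS_h$ containing a transversal line of the regulus $\cF(r)$, and—crucially—$m:=o(\ell)$ does not divide $h$. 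Consequently the exponent equals $d=\delta\theta_{h-1}=\theta_{\nu-1}^{-1}\theta_{h-1}$, which is exactly the quantity appearing in Theorem \ref{maintheta}.

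Next I would verify that the three hypotheses of Theorem \ref{maintheta} hold for this $\ell$ and $d$. The condition $\gcd(\nu,t)=1$ is built into the defining automorphism $\sigma\colon x\mapsto x^{q^\nu}$. The condition that $m$ does not divide $h$ is precisely the final assertion of Proposition \ref{p:inverse}. For the size condition $q\ge n$, I would note that $m=o(\ell)$ divides $t$, since for two distinct points $\la x\ra_q,\la y\ra_q$ of $\ell$ one has $o(\ell)=o(y/x)=[\Fq(y/x):\Fq]$, which divides $t$; moreover $m>1$, because a line $\ell$ cannot be contained in a one-dimensional $\Fq$-subspace. Choosing $n\equiv h\nu^{-1}\pmod m$ with $1\le n\le m-1$ then gives $n\le m-1\le t-1<t\le q$, so indeed $q\ge n$.

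Applying Theorem \ref{maintheta} then yields that $\ell^d$ is a normal rational curve of order $n$ in some $n$-subspace of $\PG_q(\Fqt)$. Since a projectivity sends a normal rational curve of order $n$ to one of the same order and preserves the dimension of its span, and since $\Sigma$ is identified with $\PG_q(\Fqt)$ through the projectivity $\iota$, I would conclude that $\Sigma\cap p^{-1}_{\Gamma,\ell_0}(r)$ is itself a normal rational curve of order $n$ lying in an $n$-subspace of $\Sigma$, with $m>1$ a divisor of $t$ and $n\equiv h\nu^{-1}\pmod m$, as claimed.

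The substantive mathematics has already been absorbed into the earlier results: the real content sits in Theorem \ref{maintheta} (built on Lemma \ref{general} and Lemma \ref{span0}) and in Proposition \ref{p:inverse}. Here the only point requiring care is the bookkeeping that matches the exponent $\delta\theta_{h-1}$ of Proposition \ref{p:inverse} with the $d=\theta_{\nu-1}^{-1}\theta_{h-1}$ of Theorem \ref{maintheta}, together with confirming the numerical conditions $m\mid t$, $m>1$, and $q\ge n$; none of these presents a genuine obstacle once the order of a line is understood.
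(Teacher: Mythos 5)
Your proposal is correct and follows exactly the paper's route: the paper presents Theorem \ref{t:tutte-NRC} as an immediate corollary of Theorem \ref{maintheta} combined with Proposition \ref{p:inverse}, which is precisely your argument. Your explicit verification of the hypotheses ($\delta=\theta_{\nu-1}^{-1}$ modulo $\theta_{t-1}$, $m\mid t$ with $m>1$, $m\nmid h$, and $q\ge n$) simply spells out the bookkeeping the paper leaves implicit.
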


Since the normal rational curves dealt with in Theorem \ref{t:subl-v-carriers} are of order $t-1$,
as a consequence of Theorem \ref{t:tutte-NRC} one obtains:

\begin{theorem}\label{p:unique-family}
  Let $t$ be a prime and $q \geq t$.
  For any $h\in\{1,2,\ldots,t-1\}$ and any line $f$ contained in an element of $\cS_h$ (see (\ref{e:def-Sh})),
  the $q$-order subline $r=\cB(f)$ is the projection under $p_{\Gamma,\ell_0}$  of a normal rational curve
  of order $n\equiv h\nu^{-1}\pmod t$ contained in $\Sigma$.
  In particular, if $q>t$, the normal rational curves related to lines contained in elements 
  of $\cS_{-\nu}$ are 
  exactly the normal rational curves containing the points $P_{\Gamma}^{\hs^i}$, 
  $i=0,1,\ldots,t-1$, constructed in Theorem \ref{t:subl-v-carriers}.
\end{theorem}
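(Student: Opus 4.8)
The plan is to read the first assertion directly off Theorem~\ref{t:tutte-NRC}, and then to settle the ``in particular'' clause by matching two equal cardinalities. To begin, I would fix $h\in\{1,\ldots,t-1\}$ and a line $f$ contained in an element $S_{h,k}$ of $\cS_h$. Since $f\subseteq\cQ_{t-1,q}$ is not contained in a spread element, $r=\cB(f)$ is a $q$-order subline contained in $\Lb$ and $f$ is a transversal line of $\cF(r)$, so $h$ is exactly the index supplied by Proposition~\ref{p:inverse}. Theorem~\ref{t:tutte-NRC} then identifies $\Sigma\cap p^{-1}_{\Gamma,\ell_0}(r)$ with a normal rational curve of order $n$, where $n\equiv h\nu^{-1}\pmod m$ for some divisor $m>1$ of $t$; as $t$ is prime its only divisor exceeding $1$ is $t$ itself, so $m=t$ and $n\equiv h\nu^{-1}\pmod t$ with $1\le n\le t-1$. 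This is the first assertion.

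Next I would show that $n$ is an invariant of $r$ alone. If $f'\subseteq\cQ_{t-1,q}\setminus\cS_0$ is any line with $\cB(f')=r$, lying in an element of $\cS_{h'}$, then by the first assertion the single curve $\Sigma\cap p^{-1}_{\Gamma,\ell_0}(r)$ has order congruent to both $h\nu^{-1}$ and $h'\nu^{-1}$ modulo $t$; since $\nu^{-1}$ is a unit modulo $t$ this forces $h=h'$. Consequently the $\theta_{t-1}$ lines related to $r$ (in the sense of the proof of Theorem~\ref{t:numero-sublines}) all lie in one and the same family $\cS_{h(r)}$, and because $h\mapsto h\nu^{-1}\bmod t$ permutes $\{1,\ldots,t-1\}$, the families $\cS_1,\ldots,\cS_{t-1}$ sort the $q$-order sublines of $\Lb$ according to the order of their preimage curve.

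I would then count the sublines carried by a single family. Each $\cS_h$ partitions $\cQ_{t-1,q}$ into $\theta_{t-1}$ pairwise disjoint copies of $\PG(t-1,q)$, so it contains $\theta_{t-1}\cdot\theta_{t-1}\theta_{t-2}/\theta_1=\theta_{t-1}^2\theta_{t-2}/\theta_1$ lines; each of these is related to exactly one subline, and by the previous step that subline contributes all $\theta_{t-1}$ of its related lines to $\cS_h$. Dividing, the number of sublines $r$ with $h(r)=h$ equals $\theta_{t-1}\theta_{t-2}/\theta_1$, for every $h\in\{1,\ldots,t-1\}$.

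Finally I would make the identification. The normal rational curves of Theorem~\ref{t:subl-v-carriers} have order $t-1$, so a subline whose preimage curve has order $t-1$ satisfies $h(r)\nu^{-1}\equiv t-1\equiv-1\pmod t$, i.e.\ $h(r)\equiv-\nu$; hence the $\theta_{t-1}\theta_{t-2}/\theta_1$ sublines of Theorem~\ref{t:subl-v-carriers} all lie in the family $\cS_{-\nu}$. As that family contains exactly $\theta_{t-1}\theta_{t-2}/\theta_1$ sublines by the previous step, the two collections of sublines coincide; since each subline determines its preimage $\Sigma\cap p^{-1}_{\Gamma,\ell_0}(r)$ bijectively, the normal rational curves arising from lines in $\cS_{-\nu}$ are exactly the order-$(t-1)$ curves through $P_\Gamma,P_\Gamma^{\hs},\ldots,P_\Gamma^{\hs^{t-1}}$ constructed in Theorem~\ref{t:subl-v-carriers}. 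The main obstacle I anticipate is the invariance step: one must be sure that the single curve $\Sigma\cap p^{-1}_{\Gamma,\ell_0}(r)$ pins down $n$ unambiguously and thereby forces all $\theta_{t-1}$ lines related to $r$ into one family; once that is secured, the remainder is just the line count already prepared in Section~\ref{s:sublines}.
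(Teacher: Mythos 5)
Your proposal is correct and takes essentially the same route as the paper: the first assertion is read off Theorem~\ref{t:tutte-NRC} with $m=o(\ell)=t$ forced by the primality of $t$, and the ``in particular'' clause is settled by matching the per-family count $\theta_{t-1}\theta_{t-2}/\theta_1$ of $q$-order sublines associated with $\cS_{-\nu}$ against the count of curves in Theorem~\ref{t:subl-v-carriers}. Your explicit invariance step (the order of $\Sigma\cap p^{-1}_{\Gamma,\ell_0}(r)$ pins down $h$ modulo $t$, so all $\theta_{t-1}$ transversals of a subline lie in a single family) and the line count inside one family merely spell out what the paper's terse equality of cardinalities presupposes, and both are verified correctly.
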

\begin{proof}  
Since $t$ is a prime, we have $o(f^{\iota^{-1}})=t$. 
According to Theorem \ref{t:tutte-NRC} the order of the normal rational curve 
$\Sigma\cap p^{-1}_{\Gamma,\ell_0}(r)$ is $n$ if and only if $n \equiv h\nu^{-1} \pmod t$, 
i.e.\ when $h \equiv n\nu \pmod t$. 

The number of normal rational curves constructed in 
Theorem \ref{t:subl-v-carriers} is $\theta_{t-1}\theta_{t-2}/\theta_1$, which is the same as the number of reguli $\cF(\ell)$, $\ell$ is a 
$q$-subline of $\Lb$, such that there is a transversal of $\cF(\ell)$ contained in an element of $\cS_h$.
\end{proof}

\begin{remark}
  Theorem \ref{p:unique-family} extends \emph{\cite[Theorem 5.2]{BaJa2014}} and \emph{\cite[Lemma 18]{LaVa2010}}.
\end{remark}

\section*{Acknowledgements}
This research was supported by the Italian
Ministry of Education, University and Research (PRIN 2012 project ``Strutture geometriche, combinatoria e loro
applicazioni'').

\vspace{5mm}

\begin{flushright}
Authors' Address:
Dipartimento di Tecnica e\\
Gestione dei Sistemi Industriali,\\
Universit\`a di Padova,\\
Stradella S. Nicola, 3,\\
I-36100 Vicenza,\\
Italy
\end{flushright}

\end{document}